\renewcommand{\RED}[1]{{#1}}
\title{Coupling Deep Learning with Full Waveform Inversion}
\author{
Wen Ding\thanks{
		Department of Applied Physics and Applied Mathematics, Columbia University, New York, NY 10027;
		\href{mailto:wd2288@columbia.edu}{wd2288@columbia.edu}
	}
\and
Kui Ren\thanks{
		Department of Applied Physics and Applied Mathematics, Columbia University, New York, NY 10027;
		\href{mailto:kr2002@columbia.edu}{kr2002@columbia.edu}
	}
\and
Lu Zhang\thanks{Department of Computational Applied Mathematics and Operations Research, and Ken Kennedy Institute, Rice University, Houston, TX, 77005, USA.  Email: lz82@rice.edu}  
}
\date{}
\begin{document}

\maketitle



\begin{abstract}
Full waveform inversion (FWI) aims to reconstruct unknown physical coefficients in wave equations using the wavefield data generated from multiple incoming sources. In this work, we propose an offline-online computational strategy for coupling classical least-squares-based computational inversion with modern learning-based approaches for FWI to achieve advantages that can not be achieved with only one of the components. \RED{In brief, we develop an offline learning strategy to construct a robust approximation of the inverse operator through weighted optimization and utilize it to design a new objective function for approximate online inversion with new datasets. The approximate online inversion then serves as a warm start for the true online inversion.} We demonstrate through numerical simulations that our coupling strategy improves the computational efficiency of FWI with reliable offline training on moderate computational resources (in terms of both the size of the training dataset and the computational cost needed).

\end{abstract}


\begin{keywords}
	full waveform inversion, computational inverse problem, deep learning, neural networks, preconditioning, data-driven inversion
\end{keywords}


\begin{AMS}
	35R30, 49N45, 65M32, 74J25, 78A46, 86A22
\end{AMS}


\section{Introduction}
\label{SEC:Intro}

Full waveform inversion (FWI) refers to the process of extracting information on physical parameters of wave equations from data related to the solutions to the wave equations~\cite{AkBiGh-SC02,BoDrMaZa-IP18,BrOpVi-Geophysics10,BuSaZaCh-Geophysics95,ChSa-GJI20,EnFrYa-CMS16,EpAkGhBi-IP08,LiBeLePeTr-GJI22,MeAlBrMeOuVi-Geophysics18,Plessix-GJI06,Pratt-Geophysics99,PrShHi-GJI98,SiPr-Geophysics04,Symes-GP08,TrTaLi-GJI05,ViAsBrMeRiZh-GRS14}. In seismic imaging, this is manifested as the problem of reconstructing the speed distribution of seismic waves in the interior of the Earth from measured wave field data on the Earth surface. \RED{The sources of the measured waves could be either natural, such as earthquakes, or human-induced, including geophysical exploration activities like air guns and seismic vibrators.} We refer interested readers to~\cite{BuGh-Geophysics09,Fichtner-Book11,MoTr-GJI16,ViOp-Geophysics09} and references therein for overviews on the recent development in the field of FWI for geophysical applications. While the term FWI was mainly coined in the seismic imaging community, FWI also has a wide range of applications in other imaging applications, such as in medical ultrasound imaging~\cite{BaTr-JAS20,BeMoKoLa-PMB17,GuCaTaNaWa-DM20,JaLuCo-IP20,LiViPaAn-IEEE22,LuPeTrCo-arXiv21,MaPoLiWaAn-SIAM18,WiMaBoPiKl-SR20}. From the practical point of view, the main difference between geophysical and medical FWI is that the quality of the dataset collected in medical applications, both in terms of the variety of source-detector configurations can be arranged and in terms of the frequency contents of the incident sources, is much richer than that of the geophysical FWI dataset.

For the sake of concreteness, let us consider the simplest model of acoustic wave propagation in a heterogeneous medium $\Omega$ with wave speed field $m(\bx)>0$. The wave field $u$, in time interval $(0, T]$, solves
\begin{equation}\label{EQ:Wave}
	\begin{array}{rcll}
	\dfrac{1}{m^2(\bx)}\dfrac{\partial^2 u}{\partial^2 t} -\Delta u & = & 0, & \mbox{in}\ \ (0, T] \times \Omega\\
	\dfrac{\partial u}{\partial {\bf n}} &=& h(t, \bx), & \mbox{on}\ (0, T]\times \partial\Omega\\
    u(0, \bx)&=&0, & \mbox{in}\ \Omega\\
    \dfrac{\partial u}{\partial t}(0,\bx)&=&0, & \mbox{in}\ \Omega
	\end{array}
\end{equation}
Here, ${\bf n}$ is the unit outward normal vector of the domain boundary at ${\bf x}\in\partial\Omega$. The data that we measure is time traces of the solution to the wave equation~\eqref{EQ:Wave} at a set of detector locations, say $\Gamma\subset\bbR^d$, that is,
\begin{equation}\label{EQ:Data}
	g: = u(t, \bx)|_{(0, T]\times \Gamma}\,.
\end{equation}

The objective of FWI in this setting is to recover the unknown wave speed field $m$ in the wave equation~\eqref{EQ:Wave} from the measured data $u(t, \bx)_{|(0, T]\times \Gamma}$ collected in a multi-source multi-detector configuration. This is a challenging inverse problem that has rich mathematical and computational content.
The main computational strategy, due to the lack of explicit/semi-explicit reconstruction methods, in solving the FWI inverse problem (as well as many other model-based inverse problems), is the classical $L^2$ least-squares formulation where we search for the inverse solution by minimizing the $L^2$ mismatch between model predictions and observed data. To formulate this more precisely, we assume that we collect data from $N_s$ acoustic sources $\{h_s\}_{s=1}^{N_s}$, and we denote by $f(m; h_s)$ the forward model that takes $m$ to the corresponding wave field data $g_s$ ($1\le s\le N_s$). Then the inverse problem of reconstructing $m$ from measured data $g_s^\delta$ aims at solving the following operator equation:
\begin{equation}\label{EQ:Nonl IP}
	\bff(m) = \bg^\delta
\end{equation}
where
\[
	\bff(m):=\begin{pmatrix} f(m; h_1)\\ \vdots \\ f(m; h_s)\\ \vdots \\ f(m; h_{N_s})\end{pmatrix}\ \ \ \mbox{and}\ \ \ 
	\bg^\delta:=\begin{pmatrix} g_1^\delta \\ \vdots \\g_s^\delta \\ \vdots \\ g_{N_s}^\delta \end{pmatrix}\,.
\]
The superscript $\delta$ denotes the fact that the datum $g$ is polluted by measurement noise. The classical $L^2$ least-squares method performs the reconstruction by searching for $m$ that minimizes the mismatch functional (with the possibility of adding a regularization term):
\begin{equation}\label{EQ:Obj Classical}
    \Psi(m):=\dfrac{1}{2}\|\bff(m)-\bg^\delta\|_{[L^2((0, T]\times \Gamma)]^{N_s}}^2\,.
\end{equation}
This is a challenging numerical optimization problem that has been extensively studied in the past three decades. Many novel methods have been developed to address two of the main challenges: (i) the high computational cost needed to reconstruct high-resolution images of $m$, and (ii) the abundance of local minimizers (due to the non-convexity of the least-squares functional) that trap iterative reconstruction algorithms; see for instance~\cite{BuGh-Geophysics09,Fichtner-Book11,SaSy-Book89} for a detailed explanation of those challenges among others.

In recent years, there has been great interest in the FWI community to use deep learning techniques, based on neural networks,  to replace the classical least-squares based inversion methods~\cite{AdArPo-IEEE21,ArAdFaJe-DLAA20,CoDrAmMa-GP20,FaZhLiZhWaSuZh-Geophysics20,FaArJeClBi-EAGE18,HuJiWuCh-Geophysics21,JiGuLiWaLiSh-IEEE21,KaOvPlPeZhAl-Geophysics21,LiYaReXuJiCh-Geophysics21,MaPoRaSe-arXiv21,RoYaLiThWo-IEEE20,SoAl-arXiv21,SuAl-arXiv20,SuDe-Geophysics20,SuInHu-Geophysics21,WuLi-IEEE20,YaMa-Geophysics19,YuMa-arXiv20,ZhGa-IEEE21,ZhLi-IEEE20}. Assume that we are given a set of sampled data
\begin{equation}\label{EQ:Data Training}
	\{\bg_j:=(g_{j1}, \cdots, g_{js}, \cdots, g_{jN_s})^\fT, m_j\}_{j=1}^N\,,
\end{equation}
where $\{m_j\}_{j=1}^N$ are a set of $N$ velocity profiles sampled from a given distribution and $\{\bg_j\}_{j=1}^N$ are the corresponding wave field predictions generated from $N_s$ sources $\{h_s\}_{s=1}^{N_s}$ with the model $\bg=\bff(m)$. Deep learning methods try to train a neural network, denoted by $\bff_{\theta}^{-1}(\bg)$, with $\theta$ denoting the set of parameters (that is, the weight matrices and the bias vectors) of the neural networks, that represents the inverse operator $\bff^{-1}$. A training process based on the $L^2$ loss functional can be formulated as:
\begin{equation*}
	\wh \theta=\argmin_{\theta\in\Theta} \mathfrak L(\theta) \ \ \mbox{with}\ \ \mathfrak L(\theta):=\dfrac{1}{2N} \sum_{j=1}^N \| m_j- \bff_\theta^{-1}(\bg_j)\|_{L^2(\Omega)}^2
\end{equation*}
where $\Theta$ represents the space of parameters of the network, and a regularization term can be added to the loss function $\mathfrak L(\theta)$ to help stabilize the training process. The number of samples $N$ needs to be large enough in order for $\mathfrak L(\theta)$ to be a good approximation to the expectation of the mismatch over the distribution: $\bbE_m[\|m-\bff_\theta^{-1}(\bg(m))\|_{L^2(\Omega)}^2]$. Many other types of loss functions can be used, but we will not dive into this direction. Note that since we know the forward operator $\bff$ and are only interested in learning its inverse operator, the datasets used in the training process are synthetic: for each data point $(\bg_j, m_j)$, $\bg_j$ is constructed by solving the wave equation~\eqref{EQ:Wave} with the given speed field $m_j$ and evaluate~\eqref{EQ:Data}.  

Numerical experiments, such as those documented in~\cite{ArAdFaJe-DLAA20,KaOvPlPeZhAl-Geophysics21,LiYaReXuJiCh-Geophysics21,SoAl-arXiv21,WuLi-IEEE20,YaMa-Geophysics19,YuMa-arXiv20,ZhGa-IEEE21,ZhLi-IEEE20}, showed that, with sufficiently large training datasets, it is possible to train highly accurate inverse operators that can be used to directly map measured wave field data into the velocity field. This, together with the recent success in learning inverse operators for other inverse problems (see, for instance, ~\cite{AdOk-IP17,BaYeZaZh-IP20,BuGaLaPrRaSi-SIAM21,CeJiLiZh-JCP25,FaFaYi-JCP20,RaPeKa-JCP19,SmAzRo-arXiv20} for some examples), has led many to believe, probably overly optimistically, that one can completely replace classical computational inversion with offline deep learning.

Despite the tremendous success in deep learning for FWI, it is still computationally challenging to train a once-for-all inverse machine $\bff_\theta^{-1}$. First, with the aim of reconstructing high-resolution images of the velocity field $m(\bx)$, the size of the neural networks to be constructed as a discrete representation of $\bff_\theta^{-1}$ is prohibitively large. Training such large networks requires the usage of a prohibitively large amount of data, even in relatively simple inverse scattering problems, as numerically demonstrated in~\cite{ZhHaRaBo-JCP23}. Second, it is well known that $\bff: m\mapsto \bg$ is a smoothing operator (between appropriate spaces; seen for instance, ~\cite{Isakov-Book06} and references therein for more precise mathematical characterization of the statement). The inverse operator is therefore de-smoothing. Learning such operators requires the ability to capture precisely high-frequency information in the training data, and this is very hard to do in the training process as deep neural networks tend to capture low-frequency components of the data much more efficiently than the high-frequency components~\cite{RaBaApDrLiHaBeCo-PMLR19,RoJaKaKr-NEURIPS19,XuZhXi-arXiv18}. \RED{In addition to the above, the inverse operator $\bff_\theta^{-1}$, which we learned from model-generated data, often has limited generalization, making it challenging to apply the operator to new measured datasets.}

In this work, we propose an offline-online computational strategy for coupling classical least-squares-based computational inversion with learning-based approaches for FWI to achieve advantages that can not be achieved with only one of the components. Roughly speaking, we utilize an offline-trained approximate inverse of the operator $\bff$ to precondition the online least-squares-based numerical reconstructions. Instead of pursuing high-quality training of highly accurate inverse operators, we train neural networks that only capture the main features in the velocity field. This significantly reduces the requirements for both the dataset size and computational resources needed during the training process, and the trained model is more generalizable to other classes of velocity models. Meanwhile, the offline-trained approximate inverse is sufficient as a nonlinear preconditioner to improve the speed of convergence of the classical least-squares-based FWI numerical reconstruction in the online stage of the inversion. Let us mention that similar ideas have been proposed recently in two works on inverse scattering problems for the Helmholtz equation~\cite{JiKhYa-SIAM24,ZhHaRaBo-JCP23}.  

The rest of the paper is organized as follows. We first describe the proposed coupling strategy in Section~\ref{SEC:Coupling} in the abstract setting. We then present some preliminary understanding of the training and reconstruction stage of the method in Section~\ref{SEC:Understanding}. In Section~\ref{SEC:Implementation}, we discuss the details of implementing the strategy. Extensive numerical simulations are presented in Section~\ref{SEC:Num} to demonstrate the performance of the learning-inversion coupling. Concluding remarks are offered in Section~\ref{SEC:Concl}.

\section{Coupling learning with FWI}
\label{SEC:Coupling}

Our main objective here is to couple the deep learning based image reconstruction approach with the classical least-squares-based image reconstruction method for FWI. More precisely, we utilize the approximate inverse we learned with neural networks to construct a new objective function for least-squares-based FWI reconstruction from measured data.

\subsection{Robust offline learning of main features}

In the offline learning stage, we use deep learning to train an approximate inverse of the operator $\bff$. As we outlined in the previous section, our main argument is that the learning process can only be performed reliably on a small number of dominant features of the velocity field. First, resolving all details of the velocity field requires oversized neural networks that demand an exceedingly large amount of training data, not to mention that such networks are computationally formidable to train reliably. Second, large neural networks or large sizes display serious frequency bias in capturing frequency contents in the training datasets~\cite{XuZhXi-arXiv18}, making it inefficient in fitting high-frequency components of the velocity field. Despite the challenges in resolving high-frequency features, it has been demonstrated in various scenarios that learning the low-frequency components of the velocity profile can be done in a robust manner~\cite{LiGuLiAbZhYaXu-IEEE21,ReNiYaJiCh-IEEE21,SuDe-Geophysics20}. This means that if we take the Fourier representation, the lower Fourier modes of the inverse operator can be learned stably. This good, low-frequency approximate inverse is our main interest in the learning stage (even though an accurate inverse itself would be better if it could be realistically obtained).

Let $\fM$ be the feature map we selected, and $\fm$ the corresponding feature vector, that is,
\begin{equation*}
	\fM: m(\bx)\in\cM \longmapsto \mathfrak m\in\bbM\,,
\end{equation*}
where $\cM\subseteq L^2(\Omega)$ is the class of velocity field that we are interested in and $\bbM$ the space of the feature vectors. Motivated by the analysis of weighted optimization in~\cite{EnReYa-IP20,EnReYa-ICLR22}, we train a network, which we still denote as $\bff_\theta^{-1}:\ \bg\mapsto \fm$, using the synthetic dataset~\eqref{EQ:Data Training}, through the optimization problem
\begin{equation}\label{EQ:Learning Opt}
	\wh \theta=\argmin_{\theta\in\Theta} \cL(\theta)\ \ \mbox{with}\ \ \cL(\theta):=\dfrac{1}{2N} \sum_{j=1}^N \left\| \bmu  \Big(\fm_j-\bff_\theta^{-1}(\bg_j)\Big)\right\|_{\bbM}^2
\end{equation}
where the weight matrix $\bmu$ is selected to weight the loss heavily on the features we are interested in while damping the features that are hard to learn stably.  The selection of the feature vectors as well as the weighting vector $\bmu$ will be discussed in Section~\ref{SEC:Num} in more detail. For the purpose of illustrating the main idea, let us point out that one example is to think of~\eqref{EQ:Learning Opt} as the equivalence of \begin{equation*}
	\wh \theta=\argmin_{\theta\in\Theta} \dfrac{1}{2N} \sum_{j=1}^N \int_{\Omega}\Big(\int_{\Omega} \mu(\bx-\by)\big(m_j(\by)-\bff_\theta^{-1}(\bg_j)(\by)\big)d\by \Big)^2 d\bx
\end{equation*}
in the Fourier domain, i.e. when the features we use are Fourier modes, with $\bmu$ the Fourier transform of the  
kernel $\mu(\bx)$. If we take $\mu$ to be a smoothing kernel, such as a Gaussian kernel, $\bmu$ will decay fast with the increase of the frequency. In such a case, the learning problem~\eqref{EQ:Learning Opt} focuses on the lower Fourier modes of the velocity field $m$.

Weighted optimization schemes of the form~\eqref{EQ:Learning Opt} with weight $\bmu$ to emphasize dominant features in the learning problems have been extensively studied in the learning and inverse problems community; see~\cite{EnReYa-ICLR22} and references therein. When the features we selected are Fourier basis, it has been shown that correct selecting of the weight $\bmu$ in the training scheme can lead to more robust learning results for a class of models $\bff_{\theta}^{-1}$ following certain distributions, sometimes at the expense of learning accuracy, with better generalization capabilities~\cite{EnReYa-ICLR22}. This is the main motivation for us to adopt this strategy for our purpose in this research.

\subsection{Online stage (i): main feature reconstruction}

In the online reconstructions stage, we utilize the approximate inverse we trained to construct a new objective function for FWI image reconstruction from given noisy data $\bg^\delta$. More precisely, instead of solving the model~\eqref{EQ:Nonl IP}, we aim at solving the modified model
\begin{equation}\label{EQ:Nonl IP Precond}
	\wh \bff_{\wh\theta}^{-1} \big(\bff(m)\big) = \wh \bff_{\wh\theta}^{-1}(\bg^\delta)
\end{equation}
where $$\wh \bff_{\wh\theta}^{-1}:={\fM}^{-1}\circ\bff_{\wh\theta}^{-1}:\ \bg\mapsto m$$ is the learned approximate to $\bff^{-1}$ (while $\bff_{\wh\theta}^{-1}:\ \bg\mapsto \fm$ is the learned representation in $\fM$).

The least-squares formulation for the reconstruction problem now takes the form
\begin{equation}\label{EQ:Min}
	\wh m_{(i)}=\argmin_{m\in\cM}\Phi(m)\,,
\end{equation}
with
\begin{equation}\label{EQ:Obj}
	\Phi(m):=\dfrac{1}{2} \|\wh \bff_{\wh \theta}^{-1}\Big(\bff(m)\Big) - \wh \bff_{\wh \theta}^{-1} (\bg^\delta)\|_{L^2(\Omega)}^2+\dfrac{\gamma}{2}\|{\fM}^{-1}(\bmu^{-1}  \fm)\|_{L^2(\Omega)}^2\,.
\end{equation}
The last term in the objective functional is a Tikhonov regularization functional that imposes a smoothness constraint on the target velocity field $m$ ($\fm$ being the feature vector of $m$). This smoothness constraint is selected such that it is consistent with the training process. The natural initial guess for any iterative solution scheme for this minimization problem is $m_0:=\wh\bff_{\wh\theta}^{-1}(\bg^\delta)$.

Let us emphasize that there is a significant difference between the $L^2$ objective function $\Phi(m)$ we introduced in~\eqref{EQ:Obj}, ignoring the regularization term, and the standard $L^2$ objective function $\Psi(m)$ defined in~\eqref{EQ:Obj Classical}. Our objective function $\Phi(m)$ measures the mismatch between the approximations of the predicted velocity field and the true velocity field corresponding to the measured data, while the standard objective function $\Psi(m)$ measures the mismatch between the predicted wave field data with the measured wave field data. In other words, our objective function works on the parameter space (also called the model space in the FWI literature, that is, the space of the velocity field) while the standard objective function is defined on the signal space (that is, the space of wave field signals at the detectors). With reasonably-trained $\wh\bff_{\wh\theta}^{-1}$, the functional $\Phi(m)$ has an advantageous landscape for optimization purposes, as we will demonstrate in the numerical simulations in Section~\ref{SEC:Num}.

\subsection{Online stage (ii): full least-square refinement}

Once we have reconstructed the main features in the online stage (from the new data), we can refine the reconstruction by 
\begin{equation}\label{EQ:Online Stage II}
    \wh{m}_{(ii)} = \argmin_{m\in\cM} \Psi(m)+\dfrac{\gamma}{2}\|m-\wh{m}_{(i)})\|_{L^2(\Omega)}^2,\quad\mbox{with initial guess}\ m_0=\wh m_{(i)}\,.
\end{equation}
where $\Psi(m)$ is the data mismatch defined in~\eqref{EQ:Obj Classical}, and $\wh m_{(i)}$ is the solution from stage (i) of the online reconstruction step defined in~\eqref{EQ:Min}. We use the stage (i) reconstruction as a regularizer as well as the initial guess of this stage of the reconstruction. This can be seen as a version of the warm start strategy in the literature; see, for instance,~\cite{JiKhYa-SIAM24,ZhHaRaBo-JCP23}. While the reconstruction process~\eqref{EQ:Online Stage II} is based on a full least-square problem, its efficiency will be better than a direct least-squares reconstruction since we start with a good approximation $\wh m_{(i)}$.

\subsection{The benefits of the coupling approach}

The offline-online coupling scheme we proposed allowed us to focus on training a robust approximate inverse instead of the exact inverse. This makes the learning process more stable and also requires less computational resources (in terms of the amount of data, the size of the network, and the computational cost for optimization) than training an accurate inverse. Moreover, the sacrifice in accuracy brings better generalizability for the learned approximate inverse. On the computational side, the trained approximate inverse serves as a ``preconditioner" for the inversion process. It can not only provide a good initial guess for the reconstruction but also simplify the landscape of the optimization problem. 

We finish this section with the following remark. In the ideal case when all the operators involved are invertible as they should be, the solution to~\eqref{EQ:Nonl IP Precond} is identical to the solution to~\eqref{EQ:Nonl IP}, assuming that $\bg$ indeed lives in the range of $\bff$. Therefore, our formulation does not change the true solution to the original inverse problem. However, as we will see, the new formulation utilizes the result of learning to facilitate the FWI reconstruction in terms of saving computational costs as well as making the optimization landscape more desirable.

\section{Formal understanding of the coupling}
\label{SEC:Understanding}

We now attempt to gain a more systematic understanding of the coupling strategy. As we have argued in the previous sections, it is computationally challenging to train neural networks that are accurate approximations of the inverse operator and are very generalizable at the same time. However, there is certainly some dominant information in the inverse operator that we could extract with learning, and this is the approximate inverse that we are interested in constructing.

\subsection{Elements of network training}
\label{SUBSEC:Training Error}

Due to the fact that the training data we have are generated from exactly the same operator we are trying to represent with the neural network, the learning process we have is much more under control than those purely data-driven learning problems in applications. Here we highlight a few critical issues in the learning process without getting into the details of the implementation of the learning algorithm.

\paragraph{Sampling training data.} To learn the inverse operator, we need to pay attention to both its input space and its output space. While our focus is to learn the low-frequency component of the inverse operator, we want the training data to include as much high-frequency information as possible to gain generalization capability in the input space. Let $\bK_{\rm out}$ be the frequency range for the network output that we are interested in recovering and $\bK_{\rm in}$ the frequency range of the velocity fields that generated the wavefield data. We construct the training dataset as
\[
    \{m_j(\bx), \bg:=\bff(m_j(\bx)+\wt m_j(\bx))\}_{j=1}^N
\]
where $\wt m_j(\bx)$ are selected such that $\cF(\wt m_j)(\bk)=0$ $\forall \bk\in \bK_{\rm out}$, and $\cF(\wt m_j)(\bk)\neq 0$ $\forall \bk\in \bK_{\rm in}\backslash \bK_{\rm out}$ ($\cF(m)$ denoting the Fourier transform of $m$). In other words, we train the network with input wavefield data whose velocity field has a richer frequency content than the output velocity field. This construction enriches the frequency content of the input data without increasing the computational cost of the training process.

\RED{Standard results on the acoustic wave equation~\eqref{EQ:Wave}, see for instance,~\cite[Theorem 8.1]{Isakov-Book06} and recalled below in~\Cref{PROP:Frechet Diff}, ensure that the map $m \mapsto g$ is a smoothing map under mild assumptions. Moreover, the map is Fr\'echet differentiable when restricted to a more regular velocity space. Therefore, the boundary data produced by the forward operator lies in a quantitatively smoother Sobolev space than the velocity space in which $m$ resides. This reflects the intrinsic smoothing effect of the wave equation on the boundary measurements, even though the degree of smoothing is minimal in some sense. This suggests that a well-trained network approximation should exhibit good interpolation capabilities in applications when the space of the velocity field we are interested in working with is sufficiently smooth.
\begin{proposition}[~\cite{BaSy-CPDE96,DiDoNaPaSi-SIAM02,Isakov-Book06}]\label{PROP:Frechet Diff}
    Let $\Omega$ be a smooth domain, and $m\in [\underline{m}, \overline{m}]$ for some $0<\underline{m}<\overline{m}<+\infty$. Assume that $m \in W^{k,\infty}(\Omega)$ and $h\in W^{k-1/2,\infty}((0, T]\times \partial\Omega)$ ($k\ge 1$). 
Then there is a unique solution $u$ to ~\eqref{EQ:Wave} and it satisfies the following inequality:
\[
\|u(t,\cdot)\|_{\cH^k(\Omega)}+\|\partial_t u(t,\cdot)\|_{\cH^{k-1}(\Omega)}\le C\|h\|_{\cH^{k-1/2}((0, T)\times \partial\Omega}\,.
\]
Assume further that $m\in \cC^4(\bar\Omega)$. Then the map: 
    \[
        \bff(m): \begin{array}{rcl}
        m&\longmapsto& \bg\\[1ex]
        \cC^4(\overline\Omega) &\longmapsto& \cH^{3/2}((0, T)\times \partial\Omega)
        \end{array}
    \]
    is Fr\'echet differentiable at any $m \in\cC^4(\overline\Omega)$ in the direction $\wt m$ such that $m+\wt m \in [\underline{m},\overline{m}]$.
\end{proposition}
The result is standard. We refer interested readers to~\cite{BaSy-CPDE96,DiDoNaPaSi-SIAM02,Isakov-Book06} and references therein for more precise formulations of it in different scenarios. This result also ensures that if we can train a stable network, then the learning quality is guaranteed; see Lemma~\ref{LMMA:Stab} below.}

\RED{\paragraph{Heuristic analysis of training error distribution.} Our main objective of this work is to focus the learning process on the low-frequency content of the output of the inverse operator. We do this with the weighted optimization scheme~\eqref{EQ:Learning Opt} by selecting weight $\bmu$ that penalizes heavily the low-frequency component of the mismatch of true data and the network prediction. The impact of such weighting schemes on the learning results has been analyzed extensively; see~\cite{EnReYa-IP20,EnReYa-ICLR22} and reference therein. We illustrate this in an extremely simplified setting. Let $\bF:=(\bff^{-1})'(m_0)$ be the linearization of $\bff^{-1}$ at $m_0$ for a one-dimensional medium. Assume that the learning loss function $\mathcal L(\theta)$ in~\eqref{EQ:Learning Opt} is minimized to the order of $\eps^2$ in the training process. Then, in the leading order, the trained $\bF$ satisfies
\[
    \bmu  (\bm-\bF\bG)\sim \cO(\eps),
\]
where $\bm=[\bm_1, \cdots, \bm_N]$ is the matrix whose columns are vectors of the Fourier coefficients of the training velocity samples $\{m_j\}_{j=1}^N$, $\bG=[\bg_1, \cdots, \bg_N]$ is a matrix whose columns are vectors of the input data, and $\cO(\eps)$ is a diagonal matrix of size order $\eps$. The trained linearized inverse operator, when applied to a new input data $\bg^\delta$, gives the result (with $\bG^{\fT}$ being the transpose of $\bG$)
\[
    \bF\bg^\delta \sim \big(\bm-\bmu^{-1}  \eps\big)\bG^{\fT}(\bG\bG^\fT)^{-1}\bg^\delta\,.
\]
The nature of $\bmu$ clearly indicates that the relative error in the learned output is larger in the high-frequency Fourier modes of the reconstructed velocity field.}

\subsection{Inversion with accurate training}

As we discussed in the previous section, when the network is trained so that $\wh\bff_{\wh\theta}^{-1}=\bff^{-1}$, the objective function $\Phi(m)$, defined in~\eqref{EQ:Obj}, in the reconstruction stage is a convex functional of $m$. When the learning is not perfect but sufficiently accurate, the functional $\Phi(m)$ still has an advantageous landscape. This is given in the following result.
\RED{\begin{lemma}\label{fact:landscape}
	Let $\wh\bff_{\wh\theta}^{-1}: \bg\in \cG \subseteq [L^2([0, T]\times\Gamma)]^{N_s} \mapsto m\in  \cM \subseteq L^2(\Omega)$ be an approximation to $\bff^{-1}$ with Fr\'echet derivative at $\bg$ given as $d\wh  \bff_{\wh\theta}^{-1}[\bg]$. Assume that
	\begin{equation}\label{EQ:Ass Learning 1}
    \sup\limits_{m \in\cM} \Vert \wh{\bff}_{\wh{\theta}}^{-1}\big(\bff(m)\big) - m\Vert_{L^2(\Omega)} \leq \epsilon
    \end{equation}
    and
    \begin{equation}\label{EQ:Ass Learning 2}
    A := 1+\sup\limits_{\bg \in\cG} \Vert d\wh  \bff_{\wh\theta}^{-1}[\bg]\Vert_{\cL([L^2([0,T]\times\Gamma)]^{N_s}; L^2(\Omega))}< +\infty 
    \end{equation}
	for some $\epsilon>0$ and $\bg^{\bdelta}=\bff(m)+\bdelta$ for some $\bdelta$ with $\|\bdelta\|_{[L^{2}([0, T]\times\Gamma)]^{N_s}}$ sufficiently small. Then we have that
	\begin{align}\label{EQ:Convexity}
	\left\vert\Vert \wh \bff_{\wh\theta}^{-1}\big(\bff(m)\big)- \wh \bff_{\wh\theta}^{-1}(\bg^{\bdelta})\Vert_{L^2(\Omega)}-\Vert m-m_0\Vert_{L^2(\Omega)}\right\vert \leq 2\epsilon  + A\Vert \bdelta\Vert_{[L^2([0, T]\times\Gamma)]^{N_s}}\,.
	\end{align}
\end{lemma}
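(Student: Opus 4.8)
The plan is to prove~\eqref{EQ:Convexity} by reducing it, via the reverse triangle inequality $\bigl|\,\|a\|-\|b\|\,\bigr|\le\|a-b\|$, to the estimation of a single $L^2(\Omega)$ quantity. Setting
\[
a:=\wh\bff_{\wh\theta}^{-1}\big(\bff(m)\big)-\wh\bff_{\wh\theta}^{-1}(\bg^{\bdelta}),\qquad b:=m-m_0,
\]
the left-hand side of~\eqref{EQ:Convexity} is bounded by $\|a-b\|_{L^2(\Omega)}$, and the problem then cleanly decouples into a \emph{reconstruction-error} part, controlled by the learning-accuracy hypothesis~\eqref{EQ:Ass Learning 1}, and a \emph{noise-propagation} part, controlled by the uniform derivative bound~\eqref{EQ:Ass Learning 2}. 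The two uses of~\eqref{EQ:Ass Learning 1} will account for the $2\epsilon$, and the single use of~\eqref{EQ:Ass Learning 2} for the $A\|\bdelta\|$.

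Concretely, I would introduce the noise-free reconstruction $\bar m:=\wh\bff_{\wh\theta}^{-1}\big(\bff(m)\big)$ as an intermediate anchor and split the difference $a-b$ through $\bar m$ and $m$. The term $\bar m-m$ is controlled directly by~\eqref{EQ:Ass Learning 1}, giving $\|\bar m-m\|_{L^2(\Omega)}\le\epsilon$; a second invocation of~\eqref{EQ:Ass Learning 1} is needed when passing between $m$ and $m_0$ in the companion estimate, which produces the second $\epsilon$. The noise enters only through the gap between $\wh\bff_{\wh\theta}^{-1}(\bg^{\bdelta})=\wh\bff_{\wh\theta}^{-1}\big(\bff(m)+\bdelta\big)$ and its noise-free counterpart $\bar m$, which I would estimate with the mean value inequality
\[
\big\|\wh\bff_{\wh\theta}^{-1}\big(\bff(m)+\bdelta\big)-\wh\bff_{\wh\theta}^{-1}\big(\bff(m)\big)\big\|_{L^2(\Omega)}\le\Big(\int_0^1\big\|d\wh\bff_{\wh\theta}^{-1}[\bff(m)+s\bdelta]\big\|\,ds\Big)\|\bdelta\|\le A\,\|\bdelta\|,
\]
where the supremum in~\eqref{EQ:Ass Learning 2} bounds the integrand uniformly in $s$ (the $+1$ in the definition of $A$ being a harmless slack). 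Reassembling the three pieces with the triangle inequality yields $\epsilon+\epsilon+A\|\bdelta\|$ on the right of~\eqref{EQ:Convexity}.

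The only genuinely delicate step, and the one I expect to be the main obstacle, is justifying the mean value inequality for the learned inverse: it requires the entire segment $\{\bff(m)+s\bdelta:\,s\in[0,1]\}$ to remain inside the region on which $d\wh\bff_{\wh\theta}^{-1}$ is defined and uniformly bounded by $A$, i.e. within the range of $\bff$ on which Proposition~\ref{PROP:Frechet Diff} and~\eqref{EQ:Ass Learning 2} apply. This is precisely the role of the hypothesis that $\|\bdelta\|_{[L^2([0,T]\times\Gamma)]^{N_s}}$ be sufficiently small, and everything else is soft, relying only on completeness of $L^2(\Omega)$ and the two posited learning-quality bounds, with no PDE estimates. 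I would finally remark that, since the anchor $m_0=\wh\bff_{\wh\theta}^{-1}(\bg^{\bdelta})$ is common to both norms, exploiting it directly in the reverse triangle inequality actually sharpens the constant to the single term $\epsilon$; the stated bound $2\epsilon+A\|\bdelta\|$ is the cleaner form that isolates the two independent error mechanisms and is all that is needed for the landscape conclusion.
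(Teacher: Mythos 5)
Your core argument is correct and follows the same skeleton as the paper's proof: reduce via the reverse triangle inequality to bounding $\|a-b\|_{L^2(\Omega)}$, then split into the two residuals $\wh\bff_{\wh\theta}^{-1}(\bff(m))-m$ and $\wh\bff_{\wh\theta}^{-1}(\bff(m_0))-m_0$, each at most $\epsilon$ by~\eqref{EQ:Ass Learning 1}, plus one noise-propagation term controlled by~\eqref{EQ:Ass Learning 2}. The one substantive difference is the treatment of the noise term. The paper invokes Fr\'echet differentiability only at the single point $\bff(m_0)$, writing $\wh\bff_{\wh\theta}^{-1}(\bff(m_0)+\bdelta)=\wh\bff_{\wh\theta}^{-1}(\bff(m_0))+d\wh\bff_{\wh\theta}^{-1}[\bff(m_0)](\bdelta)+o(\bdelta)$; the derivative term is bounded by $(A-1)\|\bdelta\|$, and the hypothesis that $\|\bdelta\|$ be sufficiently small is used to absorb $\|o(\bdelta)\|\le\|\bdelta\|$ --- so the ``$+1$'' in the definition of $A$ is not harmless slack in the paper's route; it is exactly what eats the Taylor remainder. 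Your mean-value inequality along the segment gives $(A-1)\|\bdelta\|$ with no remainder, so in your route neither the smallness of $\bdelta$ nor the $+1$ is ever used (and since~\eqref{EQ:Ass Learning 2} takes the sup over all $\bg$, your worry about the segment leaving the region of validity does not arise as the hypothesis is written); the price is requiring differentiability along the whole segment rather than at one point. Your attribution of the smallness hypothesis to keeping the segment in the domain therefore describes your proof, not the paper's.

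Two details need fixing, both traceable to the statement's own notational slip. The paper's proof shows that $m_0$ is the true model generating the data, i.e.\ $\bg^{\bdelta}=\bff(m_0)+\bdelta$ (the statement's ``$\bff(m)+\bdelta$'' is evidently a typo; Section~\ref{SEC:Num} speaks of ``$m_0$ being the true solution''). Accordingly, (i) the noise gap must be anchored at $\bff(m_0)$: the term to estimate by the mean-value inequality is $\wh\bff_{\wh\theta}^{-1}(\bff(m_0)+\bdelta)-\wh\bff_{\wh\theta}^{-1}(\bff(m_0))$, and the second $\epsilon$ is $\|\wh\bff_{\wh\theta}^{-1}(\bff(m_0))-m_0\|_{L^2(\Omega)}$. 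Anchored at your $\bar m=\wh\bff_{\wh\theta}^{-1}(\bff(m))$ as written, the gap $\wh\bff_{\wh\theta}^{-1}(\bg^{\bdelta})-\bar m$ is of size $\|m-m_0\|$, not $O(\|\bdelta\|)$, and the estimate does not close for $m\neq m_0$. (ii) Your closing remark --- that taking $m_0=\wh\bff_{\wh\theta}^{-1}(\bg^{\bdelta})$ as the common anchor sharpens the bound to a single $\epsilon$ --- rests on this misidentification; under that reading assumption~\eqref{EQ:Ass Learning 2} and the noise term become superfluous, and the lemma would no longer say that the landscape of $\Phi$ is uniformly close to distance-to-the-truth, which is its entire point. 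With $m_0$ read correctly, your decomposition closes exactly as the paper's does, with the bound $2\epsilon+(A-1)\|\bdelta\|\le 2\epsilon+A\|\bdelta\|$.
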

}
\begin{proof}
	We denote by $r(m) = \wh\bff_{\wh\theta}^{-1}\big(\bff(m)\big)-m$. We then have, by Taylor's theorem, that
	\[
	    \wh \bff_{\wh\theta}^{-1}(\bg^{\bdelta}) =\wh  \bff_{\wh\theta}^{-1}(\bff(m_0)+ \bdelta )=m_0+r(m_0)+ d\wh \bff_{\wh\theta}^{-1}[\bff(m_0)]\big(\bdelta \big) +o(\bdelta),
    \]
	where $\lim\limits_{\bdelta\rightarrow 0} \frac{\|o(\bdelta)\|_{L^2(\Omega)}}{\|\bdelta\|_{[L^2([0, T]\times\Gamma)]^{N_s}}} = 0$. We therefore have
	\[
	    \wh	\bff_{\wh\theta}^{-1}\big(\bff(m) \big)- \wh \bff_{\wh\theta}^{-1}(\bg^{\bdelta})= m-m_0+r(m)-r(m_0)-d\wh \bff_{\wh\theta}^{-1}[\bff(m_0)]\big(\bdelta\big)+o(\bdelta)\,.
    \]
	We can now use the triangle inequality to conclude that
	\begin{multline*}
	\left\vert\Vert \wh \bff_{\wh\theta}^{-1}\big( \bff(m) \big)- \wh \bff_{\wh\theta}^{-1}(\bg^{\bdelta})\Vert_{L^2(\Omega)}-\Vert m-m_0\Vert_{L^2(\Omega)}\right\vert\\
 \leq\Vert r(m)-r(m_0)-d\wh \bff_{\wh\theta}^{-1}[\bff(m_0)]\big(\bdelta \big) +o(\bdelta)\Vert_{L^2(\Omega)}\\
	\leq \Vert r(m)\Vert_{L^2(\Omega)} +\Vert r(m_0)\Vert_{L^2(\Omega)} + \Vert d\wh \bff_{\wh\theta}^{-1}[\bff(m_0)]\big(\bdelta\big)\Vert_{L^2(\Omega)} + \Vert o(\bdelta)\Vert_{L^2(\Omega)} \\ 
	\leq 2\epsilon  + A\Vert \bdelta\Vert_{[L^2([0, T]\times\Gamma)]^{N_s}}\,
	\end{multline*}
	where the last step comes from the assumptions in~\eqref{EQ:Ass Learning 1} and~\eqref{EQ:Ass Learning 2}. The proof is complete.
\end{proof}
\RED{This result says that the new objective function $\Phi(m)$ in~\eqref{EQ:Obj} behaves like the quadratic functional $\|m-\wh\bff_{\wh\theta}^{-1}(\bg)\|_{L^2(\Omega)}^2$ when $\epsilon$ and $\bdelta$ vanishing. We note that although convexity is not guaranteed when $\epsilon$ and $\bdelta$ are nonzero, the objective function $\Phi(m)$ inherits a locally favorable landscape: small values $\epsilon$ and $\bdelta$ ensure that $\Phi(m)$ behaves as a controlled perturbation of a convex functional in a neighborhood of $m_0$. Moreover, it is clear that we can replace the strong assumption on the accuracy of $\wh \bff_{\wh\theta}^{-1}$, $\sup\limits_{m}\Vert \wh{\bff}_{\wh{\theta}}^{-1}\big(\bff(m)\big) - m\Vert_{L^2(\Omega)} \leq \epsilon$, with the weaker assumption $\Vert \wh{\bff}_{\wh \theta}^{-1}\big(\bff(m)\big) - m\Vert_{L^2(\Omega)} \le \epsilon\|m\|_{L^2(\Omega)}$, in which case the $2\epsilon$ term in the bound~\eqref{EQ:Convexity} will be replaced by $\epsilon (\|m\|_{L^2(\Omega)}+\|m_0\|_{L^2(\Omega)})$. The conclusion still holds.}

Due to the smoothing property of the forward operator as given in Proposition~\ref{PROP:Frechet Diff}, the stability of the trained inverse operator, measured by the boundedness of its Fr\'echet derivative, is enough to ensure the accuracy of the neural network reconstruction. Therefore, if we could train network approximations with such stability properties, they have good generalization capabilities in the output space.
\begin{lemma}\label{LMMA:Stab}
    Let $m, m_0\in\cC^2(\Omega)\cap[\underline{m}, \overline{m}]$ for some $0<\underline{m}< \overline{m}<+\infty$. Then, when $\|m-m_0\|_{L^2(\Omega)}$ is sufficiently small, there exists a constant $\fc$ such that
    \begin{equation}\label{EQ:Stab}
    \Vert \wh{\bff}_{\wh{\theta}}^{-1}\big(\bff(m)\big) -\wh{\bff}_{\wh{\theta}}^{-1}\big(\bff(m_0)\big) \Vert_{L^2(\Omega)} \leq \fc \|m-m_0\|_{L^2(\Omega)}
    \end{equation}
\end{lemma}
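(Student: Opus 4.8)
The plan is to read the left-hand side of~\eqref{EQ:Stab} as the increment of the composite map $G := \wh{\bff}_{\wh{\theta}}^{-1}\circ\bff$ and to bound it by the product of the Lipschitz constants of its two factors. I would obtain each factor's Lipschitz estimate from a Fr\'echet-derivative bound together with the mean value inequality. For the inner factor I integrate the derivative along the straight segment $m_t := m_0 + t(m-m_0)$, $t\in[0,1]$; this is legitimate because $[\underline m,\overline m]$ imposes only pointwise bounds and $\cC^2(\bar\Omega)$ is a linear space, so the admissible set $\cC^2(\bar\Omega)\cap[\underline m,\overline m]$ is convex and the whole segment $\{m_t\}$ remains admissible whenever its endpoints $m,m_0$ do.

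First I would handle the outer factor. By the standing hypothesis~\eqref{EQ:Ass Learning 2}, the learned inverse has Fr\'echet derivative bounded in operator norm by $A-1$ uniformly over all inputs $\bg$, hence $\wh{\bff}_{\wh{\theta}}^{-1}$ is globally Lipschitz with constant $A-1$. Applying this to the two points $\bff(m)$ and $\bff(m_0)$ gives
\[
\Vert \wh{\bff}_{\wh{\theta}}^{-1}(\bff(m)) - \wh{\bff}_{\wh{\theta}}^{-1}(\bff(m_0))\Vert_{L^2(\Omega)} \le (A-1)\,\Vert \bff(m) - \bff(m_0)\Vert_{[L^2([0,T]\times\Gamma)]^{N_s}}.
\]
Second I would handle the inner factor. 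Proposition~\ref{PROP:Frechet Diff} supplies Fr\'echet differentiability of $\bff$ at every admissible $m$, so $d\bff[m_t]$ is a bounded linear operator for each $t$, and integrating along $\{m_t\}$ yields
\[
\Vert \bff(m)-\bff(m_0)\Vert_{[L^2([0,T]\times\Gamma)]^{N_s}} \le \Big(\sup_{t\in[0,1]}\Vert d\bff[m_t]\Vert\Big)\,\Vert m-m_0\Vert_{L^2(\Omega)}.
\]
Combining the two displays and setting $\fc := (A-1)\sup_{t\in[0,1]}\Vert d\bff[m_t]\Vert$ produces~\eqref{EQ:Stab}.

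The main obstacle is showing that the supremum $\sup_{t\in[0,1]}\Vert d\bff[m_t]\Vert$ is finite and controlled. For this I would use that, under the regularity restriction to $\cC^2(\bar\Omega)\cap[\underline m,\overline m]$, the assignment $m\mapsto d\bff[m]$ is continuous in operator norm, so $t\mapsto\Vert d\bff[m_t]\Vert$ is continuous on the compact interval $[0,1]$ and attains a finite maximum; the hypothesis that $\Vert m-m_0\Vert_{L^2(\Omega)}$ is sufficiently small lets me confine the whole segment to a fixed neighborhood of $m_0$ on which this norm is uniformly bounded, which is exactly what turns $\fc$ into a genuine local Lipschitz constant. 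A secondary point demanding care is the norm bookkeeping: Proposition~\ref{PROP:Frechet Diff} phrases differentiability in the stronger $\cC^2$ topology, whereas~\eqref{EQ:Stab} measures both sides in the $L^2(\Omega)$ norm, so to land the estimate in $L^2$ I would exploit the uniform $\cC^2$ bounds carried by $m$, $m_0$ and the entire segment within the admissible class to pass from the derivative estimate to an $L^2$-to-$L^2$ bound. This is precisely the place where the smoothing property of $\bff$ emphasized after Proposition~\ref{PROP:Frechet Diff} enters, ensuring that the (de-smoothing) inverse is applied only to data increments that remain quantitatively smoother than the velocity increments driving them.
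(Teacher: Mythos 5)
Your proposal is correct in substance and uses exactly the same two ingredients as the paper: the uniform derivative bound~\eqref{EQ:Ass Learning 2} on the learned inverse (which, like the paper, you carry over as a standing hypothesis since it is not restated in the lemma) and the Fr\'echet differentiability of $\bff$ from Proposition~\ref{PROP:Frechet Diff}. The difference is in the implementation. The paper Taylor-expands at a single point: it expands $\bff$ at $m_0$ and $\wh\bff_{\wh\theta}^{-1}$ at $\bff(m_0)$, so that the increment equals $d\wh\bff_{\wh\theta}^{-1}[\bff(m_0)]\big(d\bff[m_0](m-m_0)\big)$ plus an $o(m-m_0)$ remainder, and the ``sufficiently small'' hypothesis serves only to absorb that remainder into the constant $\fc$. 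You instead compose two Lipschitz estimates: a global one for the outer factor (valid, since~\eqref{EQ:Ass Learning 2} bounds the derivative uniformly in $\bg$ on the whole convex data space) and a mean-value bound along the segment $m_t$ for the inner factor. The one place where your route asks for more than the paper supplies is the finiteness of $\sup_{t\in[0,1]}\Vert d\bff[m_t]\Vert$: you justify it by asserting that $m\mapsto d\bff[m]$ is continuous in operator norm, but Proposition~\ref{PROP:Frechet Diff} as stated gives only pointwise differentiability, and differentiability at each point of the segment does not by itself yield a locally uniform derivative bound; moreover, the smallness of $\Vert m-m_0\Vert_{L^2(\Omega)}$ does not rescue this step, since confining the segment to a small neighborhood of $m_0$ is of no use without local boundedness of the derivative map. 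That continuity is a true and standard fact for this forward operator (available in the references the paper cites), so your argument is repairable as written; but the most economical repair is precisely the paper's move: replace the mean-value bound by the first-order Taylor estimate $\Vert\bff(m)-\bff(m_0)\Vert \le \big(\Vert d\bff[m_0]\Vert+1\big)\Vert m-m_0\Vert_{L^2(\Omega)}$, valid for $\Vert m-m_0\Vert_{L^2(\Omega)}$ small, which needs differentiability at $m_0$ only. Your closing remark about the norm bookkeeping ($\cC^2$ regularity versus $L^2$ increments) is a fair caveat, but the paper's own proof glosses over it in the same way, implicitly reading Proposition~\ref{PROP:Frechet Diff} as an $L^2(\Omega)$-to-data-space differentiability statement.
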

\begin{proof}
By Proposition~\ref{PROP:Frechet Diff}, the map $m\mapsto \bg:=\bff(m)$ is Fr\'echet differentiable with the derivative at $m$ in direction $\wt m$ denoted as $d\bff[m](\wt m)$. By Taylor's theorem, we have \begin{multline*}
    \wh \bff_{\wh\theta}^{-1}\big(\bff(m)\big) =\wh  \bff_{\wh\theta}^{-1}\big(\bff(m_0)+ d\bff[m_0](m-m_0)+o(m-m_0)\big) \\
    =\wh  \bff_{\wh\theta}^{-1}\big(\bff(m_0)\big)+d\wh \bff_{\wh\theta}^{-1}[\bff(m_0)]\big(d\bff[m_0](m-m_0)\big) +\wt o(m-m_0),
\end{multline*}
	where $\lim\limits_{m\rightarrow m_0} \frac{\|o(m-m_0)\|_{[L^2([0, T]\times\Gamma)]^{N_s}}}{\|m-m_0\|_{[L^2(\Omega)}} = \lim\limits_{m\rightarrow m_0} \frac{\|\wt o(m-m_0)\|_{L^2(\Omega)}}{\|m-m_0\|_{[L^2(\Omega)]}} = 0$. We therefore have
	\[
	    \wh	\bff_{\wh\theta}^{-1}\big(\bff(m) \big)- \wh \bff_{\wh\theta}^{-1}(\bff(m_0))= d\wh \bff_{\wh\theta}^{-1}[\bff(m_0)]\big(d\bff[m_0](m-m_0)\big)+\wt o(m-m_0)\,.
    \]
    The bound in~\eqref{EQ:Stab} then follows from the assumption~\eqref{EQ:Ass Learning 2}.
\end{proof}

\RED{When the class of velocity models is sufficiently nice, for instance, when each $m(\bx)$ can be represented with a small number of Fourier coefficients in a narrow frequency band, one can hope that accurate training is achievable. When this is the case, the assumptions in Lemma~\ref{fact:landscape} and Lemma~\ref{LMMA:Stab} are more likely to hold, and therefore the learned model can be utilized to facilitate the FWI reconstruction with the new dataset.} 

\subsection{Computational simplifications}

The reconstruction stage of the coupling can be greatly simplified when the training of the neural network approximation is sufficiently accurate.

First, the coupling method will degenerate into a deep learning based method when we have confidence in our ability to train an accurate deep neural network representation of the inverse operator in FWI. Indeed, when $\wh \bff_{\wh \theta}^{-1}=\bff^{-1}$, that is, $\wh \bff_{\wh \theta}^{-1}$ is exactly the inverse, the reconstruction step~\eqref{EQ:Min} simplifies to
\begin{equation*}
	\wh m_{(i)}=\argmin_{m\in\cM} \dfrac{1}{2}\|m- \wh \bff_{\wh \theta}^{-1}(\bg^\delta)\|_{L^2(\Omega)}^2	+\dfrac{\gamma}{2}\|\nabla m\|_{L^2(\Omega)}^2\,,
\end{equation*}
assuming, only for the sake of simplifying the notation, that the weighting operator $\mu(\bx-\by)$ is taken as an integral operator whose kernel has a Fourier transform that behaves like $\wh\mu\sim |\bk|^{-1}$. This gives a fast inversion for the new data and immediately leads to the optimal selection of the regularization parameter when the regularization term is not too complicated. In this case, we simply did a post-processing on the deep learning reconstruction given by the operator $\wh \bff_{\wh \theta}^{-1}$. The solutions to this are explicitly given as
\[
	\wh m_{(i)}=(\cI+\gamma\Delta)^{-1}\wh \bff_{\wh \theta}^{-1}(\bg^\delta)\,,
\]
where $\cI$ is the identity and $\Delta$ is the Laplacian operator. Therefore, $m$ is simply a smoothed version of the result produced by the trained neural network, $\wh \bff_{\wh \theta}^{-1}(\bg^\delta)$. The exact form of the smoothing effect depends on the selection of $\mu$.

Second, when we can not train an accurate $\bff^{-1}$, but can train a good approximation to the inverse, that is, when the operator $\cI-\wh \bff_{\wh \theta}^{-1} \circ \bff$ is not zero but small in an appropriate operator norm, the FWI problem~\eqref{EQ:Nonl IP Precond} can be solved by using ``Neumann series". More precisely, we can rewrite  ~\eqref{EQ:Nonl IP Precond} as
\begin{equation*}
	m - K(m)=\wh \bff_{\wh\theta}^{-1}(\bg^\delta), \ \ \ K:=\cI-\wh \bff_{\wh \theta}^{-1} \circ \bff\,.
\end{equation*}
Even though $K$ is nonlinear, assuming that the training is sufficiently accurate, we can formally write the solution in a Neumann series as
\begin{equation}\label{EQ:Neumann}
	\wh m_{(i)} =(\cI-K)^{-1}(\wh \bff_{\wh\theta}^{-1}(\bg^\delta))=\sum_{j=0}^\infty K^j\big(\wh \bff_{\wh\theta}^{-1}(\bg^\delta)\big)\,.
\end{equation}
Numerically, we observe that the better the approximation $\wh \bff_{\wh\theta}^{-1}$ is to $\bff^{-1}$, the faster the series converges. In theory, however, this series doesn't necessarily converge (again, since $K$ is nonlinear). For the training we had, see more discussion in Section~\ref{SEC:Num}, a few terms of the Neumann series often provide sufficient accuracy for the reconstruction.

\RED{Let us emphasize that by the informal analysis in Section~\ref{SUBSEC:Training Error}, the error in the learning implies roughly that $|\cF\big(m-K(m)\big)(\bk)| \sim \zeta(\bk)\|m\|_{L^2(\Omega)}$ with $\zeta(\bk)$ denotes the frequency-dependent error amplification factor arising from the learned inverse operator, and it is large for large $|\bk|$. Due to the fact that the operator norm of $\cI-K$ is bounded below by $\max_{\bk}\zeta(\bk)$, this means that the convergence speed of the Neumann series is controlled by the worst training error in the (high-frequency) Fourier modes.}

\subsection{Refinement to reconstruct outside of training domain}\label{Sec:outside_domain}

It is important to point out that the weight $\bmu$ in the weighted training scheme~\eqref{EQ:Learning Opt} should be selected to emphasize the low-frequency components of the output and penalize the high-frequency components. It should not completely remove the high-frequency components. If it does, then the high-frequency components of the velocity field in the reconstruction stage can not be recovered with the optimization problem~\eqref{EQ:Min}. This is an obvious yet important observation that we summarize as a lemma to emphasize it.
\begin{lemma}
    Let $\wh\bff_{\wh\theta}^{-1}$ be such that for any $m$, $\cF[\wh\bff_{\wh\theta}^{-1}(\bff(m))](\bk)=\bzero$ $\forall |\bk|>k_0$, and $\wh m{(i)}$ be reconstructed from~\eqref{EQ:Min} with a gradient-based iterative scheme or the Neumann series method in~\eqref{EQ:Neumann}. Then $\cF[\wh m_{(i)}](\bk)=\bzero$ $\forall |\bk|>k_0$.
\end{lemma}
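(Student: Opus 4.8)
The plan is to show that the high-frequency Fourier content, which is absent from the output of $\wh\bff_{\wh\theta}^{-1}\circ\bff$ by hypothesis, can never be reintroduced by either of the two reconstruction procedures, because both build the reconstruction out of repeated applications of operators whose ranges live entirely in the low-frequency band $\{|\bk|\le k_0\}$. Let me write $V_{k_0}:=\{v\in L^2(\Omega):\cF[v](\bk)=\bzero\ \forall |\bk|>k_0\}$ for the closed subspace of band-limited functions. The hypothesis says precisely that the range of the operator $m\mapsto\wh\bff_{\wh\theta}^{-1}(\bff(m))$ is contained in $V_{k_0}$; in particular the initial guess $m_0=\wh\bff_{\wh\theta}^{-1}(\bg^\delta)=\wh\bff_{\wh\theta}^{-1}(\bff(m)+\bdelta)$ lies in $V_{k_0}$ once we note $\bg^\delta$ is itself of the form $\bff(\tilde m)$ for the appropriate argument, so that the same band-limiting conclusion applies. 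The goal is to prove $\wh m\in V_{k_0}$ in both reconstruction scenarios.

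First I would treat the Neumann series case, which is the cleaner of the two. Recall $K=\cI-\wh\bff_{\wh\theta}^{-1}\circ\bff$, so that for any $m$ we have $K(m)=m-\wh\bff_{\wh\theta}^{-1}(\bff(m))$. The key observation is that $K$ maps $V_{k_0}$ into itself: if $m\in V_{k_0}$ then $m\in V_{k_0}$ trivially and $\wh\bff_{\wh\theta}^{-1}(\bff(m))\in V_{k_0}$ by hypothesis, so their difference stays in $V_{k_0}$ since $V_{k_0}$ is a linear subspace. Because $m_0=\wh\bff_{\wh\theta}^{-1}(\bg^\delta)\in V_{k_0}$, an induction on $j$ gives $K^j(m_0)\in V_{k_0}$ for every $j\ge 0$. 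Then $V_{k_0}$ being closed in $L^2(\Omega)$, the convergent sum $\wh m=\sum_{j=0}^\infty K^j(\wh\bff_{\wh\theta}^{-1}(\bg^\delta))$ in~\eqref{EQ:Neumann} remains in $V_{k_0}$, which is exactly the claim $\cF[\wh m](\bk)=\bzero$ for all $|\bk|>k_0$.

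For the gradient-based iterative case the argument is analogous but requires tracking where the gradient of $\Phi$ lives. The objective $\Phi(m)$ in~\eqref{EQ:Obj} depends on $m$ only through $\wh\bff_{\wh\theta}^{-1}(\bff(m))$ in the data-fidelity term and through a regularizer chosen consistently with the training; differentiating, the gradient is an adjoint-type operator applied to a residual that itself lies in $V_{k_0}$, and one checks that the Fréchet derivative $d(\wh\bff_{\wh\theta}^{-1}\circ\bff)$ has range in $V_{k_0}$ by differentiating the band-limiting identity in $m$. Hence every gradient direction lies in $V_{k_0}$, and starting from $m_0\in V_{k_0}$, each iterate $m_{n+1}=m_n-\tau_n\nabla\Phi(m_n)$ stays in $V_{k_0}$ by induction; the limit $\wh m$ then lies in the closed subspace $V_{k_0}$ as before.

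I expect the main obstacle to be the gradient-based case, specifically justifying that the Fréchet derivative of the composite map $m\mapsto\wh\bff_{\wh\theta}^{-1}(\bff(m))$ also has range in $V_{k_0}$ rather than merely the map itself. One would like to differentiate the hypothesis $\cF[\wh\bff_{\wh\theta}^{-1}(\bff(m))](\bk)=\bzero$ for $|\bk|>k_0$ with respect to $m$; since this holds for every $m$ as an identity between the value and $\bzero$, the directional derivative in any admissible direction $\wt m$ inherits the same vanishing high-frequency content, provided the map is Fréchet differentiable (which Proposition~\ref{PROP:Frechet Diff} together with differentiability of $\wh\bff_{\wh\theta}^{-1}$ supplies). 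The subtlety is handling the regularization term's contribution to the gradient: one must confirm the chosen Tikhonov penalty, being built from $\fM$ and $\bmu$ consistently with training, contributes a gradient component that also preserves $V_{k_0}$, which holds because a Fourier-diagonal penalty commutes with the spectral projection onto $\{|\bk|\le k_0\}$. Once these two points are secured the induction closes and the result follows.
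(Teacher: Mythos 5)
Your proposal follows the same route as the paper's own proof: show that the band-limited subspace $V_{k_0}=\{v:\cF[v](\bk)=\bzero\ \forall\,|\bk|>k_0\}$ is invariant under every operation the reconstruction performs, then induct. Your Neumann-series half is correct and in fact slightly tidier than the paper's (the paper never remarks that $V_{k_0}$ is closed, which is what justifies passing from the partial sums to the infinite series); the one caveat you share with the paper is that the stated hypothesis only controls $\wh\bff_{\wh\theta}^{-1}$ on the range of $\bff$, so for noisy data $\bg^\delta=\bff(m)+\bdelta$ one must additionally assume the network output is band-limited for \emph{arbitrary} inputs in order to place $m_0=\wh\bff_{\wh\theta}^{-1}(\bg^\delta)$ in $V_{k_0}$.

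The gradient-based half, however, contains a genuine gap, and it sits exactly at the step you flagged as the main obstacle. Writing $G:=\wh\bff_{\wh\theta}^{-1}\circ\bff$, you correctly deduce (by differentiating the band-limiting identity) that $dG[m]$ has range in $V_{k_0}$, but then conclude that ``every gradient direction lies in $V_{k_0}$.'' These are different statements. The gradient of the data-fidelity term is the \emph{adjoint} applied to the residual, $\nabla\Phi(m)=(dG[m])^{*}r(m)+(\mbox{regularization term})$, and a range condition on $dG[m]$ controls the kernel of $(dG[m])^{*}$ (it annihilates $V_{k_0}^{\perp}$), not its range. For the gradient to lie in $V_{k_0}$ one needs $\langle r(m),dG[m](\delta m)\rangle=0$ for every high-frequency direction $\delta m$, i.e.\ the input-side condition $dG[m]|_{V_{k_0}^{\perp}}=0$ (equivalently, $G(m)$ depends only on the low-pass part of $m$); this does not follow from the hypothesis that the \emph{output} of $G$ is band-limited. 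A two-dimensional caricature: let $(a,b)$ stand for the low- and high-frequency coefficients and take $G(a,b)=(2a+b,0)$, which satisfies the hypothesis. With data $y=(c,0)$, $c\neq 0$, and initial guess $m_0=y$, the objective is $\frac{1}{2}(2a+b-c)^2$, the residual at $m_0$ is $(c,0)\neq 0$, and the gradient at $m_0$ is $c\,(2,1)$, whose high-frequency entry is nonzero; gradient descent then converges to $(3c/5,-c/5)$, which violates the claimed conclusion. So for gradient schemes the lemma needs the stronger hypothesis $G(m)=G(P_{\le k_0}m)$, under which your induction does close. To be fair, the paper's own proof makes precisely the same unjustified leap (it asserts that band-limitedness of $r(m_\ell)$ forces the gradient to be band-limited), so your write-up reproduces rather than introduces this gap; but the step is not justified by the stated assumptions, and your phrasing, which supplies a partial justification via the derivative's range, makes it easier to mistake for a complete argument.
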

\begin{proof}
Under the assumption on $\wh\bff_{\wh\theta}^{-1}$, it is straightforward to check that
$\cF(m_0)(\bk)=0$ ($m_0:=\wh\bff_{\wh\theta}^{-1}(\bg^\delta)$) $\forall |\bk|>k_0$, and $\cF(K^j m_0)(\bk)=0$ $\forall |\bk|>k_0$, for any $j\ge 1$. Therefore $\cF(\wh m{(i)})(\bk)=0$ $\forall |\bk|>k_0$. Let $m_\ell$ be the $\ell$-th iteration of a gradient based iterative scheme, then $\cF(r(m_\ell))(\bk)=0$ ($r(m):=\wh \bff_{\wh \theta}^{-1}\Big(\bff(m)\Big) - \wh \bff_{\wh \theta}^{-1} (\bg^\delta)$) $\forall |\bk|>k_0$. This leads to the fact that $\cF\Big(d\Phi[m_\ell](\delta m)\Big)(\bk)=0$ for any $\delta m$. Therefore, $\cF(m_{\ell+1})(\bk)=0$ $\forall |\bk|>k_0$. The rest of the proof follows from an induction.
\end{proof}

For any velocity field that can be written as $m_b+\delta m$ with $m_b$ the prediction of the trained neural network and $\delta m$ outside of the range of the neural network but either has small amplitude (compared to that of $m$) or has large amplitude by small support compared to the size of the domain (in which case $\delta m$ is very localized), we can recover $\delta m$ with an additional linearized reconstruction step. We linearize the inverse problem around the network prediction $\bff_{\wh\theta}^{-1}(\bg^\delta)$. The reconstruction can be performed with a classical migration scheme, or equivalently by minimizing the following quadratic approximation to the functional~\eqref{EQ:Obj}:
\begin{equation}\label{EQ:Obj Quadra}
    \Psi_Q(m)=\dfrac{1}{2}\left\|\bff(m_b)+d\bff[m_b]\Big(m-m_b\Big)-\bg^\delta\right\|_{[L^2((0, T]\times \Gamma)]^{N_s}}^2+\dfrac{\gamma}{2}\| m-m_b\|_{L^2(\Omega)}^2\,,
\end{equation}
where $m_b:=\bff_{\wh\theta}^{-1}(\bg^\delta)$.

\section{Computational implementation}
\label{SEC:Implementation}

We now provide some details on the implementation of the coupling framework we outlined in the previous section. For computational simplicity, we focus on the implementation in two spatial dimensions even though the methodology itself is independent of the dimension of the problem.

\subsection{Computational setup}
\label{SUBSEC:Setup}

\begin{figure}[htb!]
	\centering
	\includegraphics[width=0.45\linewidth,trim=5cm 2cm 4cm 2cm,clip]{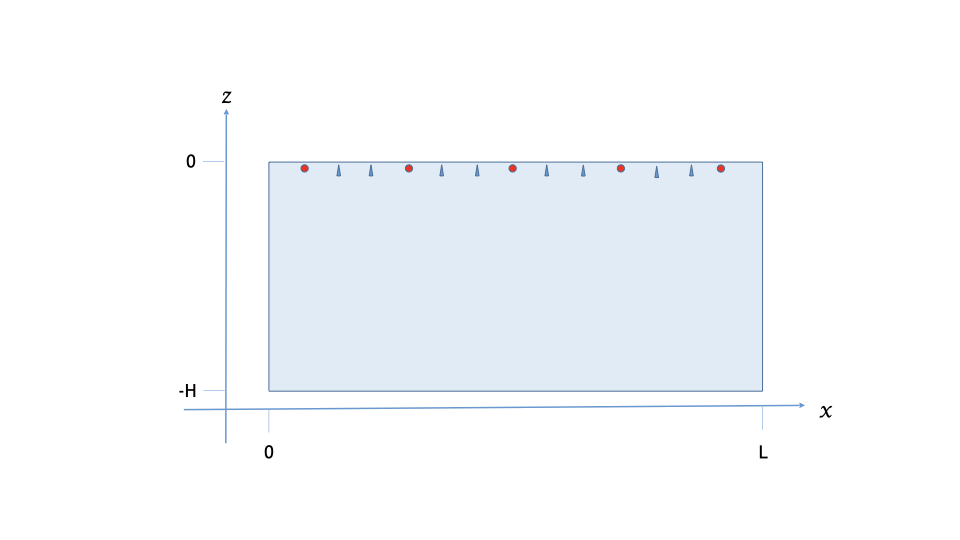}\hskip 1cm
	\includegraphics[width=0.45\linewidth,trim=5cm 2cm 4cm 2cm,clip]{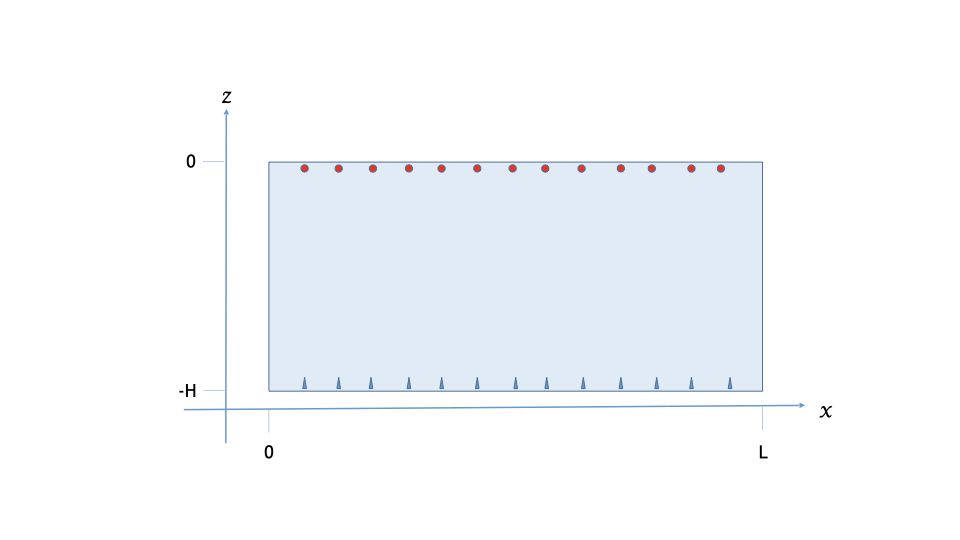}
	\caption{The two-dimensional computational domain $\Omega=(0, L)\times(-H, 0)$ for wave propagation. Periodic boundary conditions are imposed on the left and right boundaries. In geophysical applications, sources and detectors are placed on the top boundary (left), while in medical ultrasound applications, sources (red dots) and detectors (blue triangles) can be placed on both the top and the bottom boundaries (right).}
	\label{FIG:Setup}
\end{figure}
For the purpose of being concrete, we first describe briefly the geometrical setting under which we implement the learning and reconstruction algorithms. Let $\bx=(x, z)$. The computational domain of interests is $\Omega=(0, L)\times(-H, 0)$. We impose periodic boundary conditions on the left and right boundaries of the domain. Probing sources and detectors are placed on the top and bottom boundaries $\Gamma_t=(0, L)\times\{0\}$ and $\Gamma_b=(0, L)\times\{-H\}$, depending on the exact applications we have in mind. In geophysical applications, the sources and detectors are both placed on the top boundary, while in medical ultrasound type of applications, the sources and detectors could be placed on the opposite sides; see Figure~\ref{FIG:Setup} for an illustration. Under this setup, the wave equation ~\eqref{EQ:Wave} with a source $h(t, x)$ on the top boundary and a reflective bottom boundary takes the form
\begin{equation}\label{EQ:Wave Equation}
\begin{array}{rcll}
\dfrac{1}{m^2}\dfrac{\partial^2 u}{\partial^2 t} -\Delta u & = & 0, & \mbox{in}\ \ (0, T]\times \Omega,\\[1ex]
u(0, x, z)=\dfrac{\partial u}{\partial t}(0, x, z)  &=& 0,& (x,z)\in(0, L)\times (-H, 0),\\
u(t, 0, z) & = & u(t, L, z), &  (t, z)\in (0, T] \times (-H, 0),\\
\dfrac{\partial u}{\partial z}(t,x,-H) & = & 0, & (t, x)\in (0, T]\times (0, L),\\
\dfrac{\partial u}{\partial z}(t, x, 0) & = & h(t, x), & (t, x)\in (0, T] \times (0, L).
\end{array}
\end{equation}
Similar equations can be written down for other types of source-detector configurations.

\subsection{The neural network for learning}
\label{SUBSEC:Network}

With the above computational setup, we can generate the training dataset~\eqref{EQ:Data Training} by solving the wave equation~\eqref{EQ:Wave Equation} with given source functions. We will describe in detail how the training dataset is generated, including the spatial-temporal discretization of the wave equation~\eqref{EQ:Wave Equation}.
 
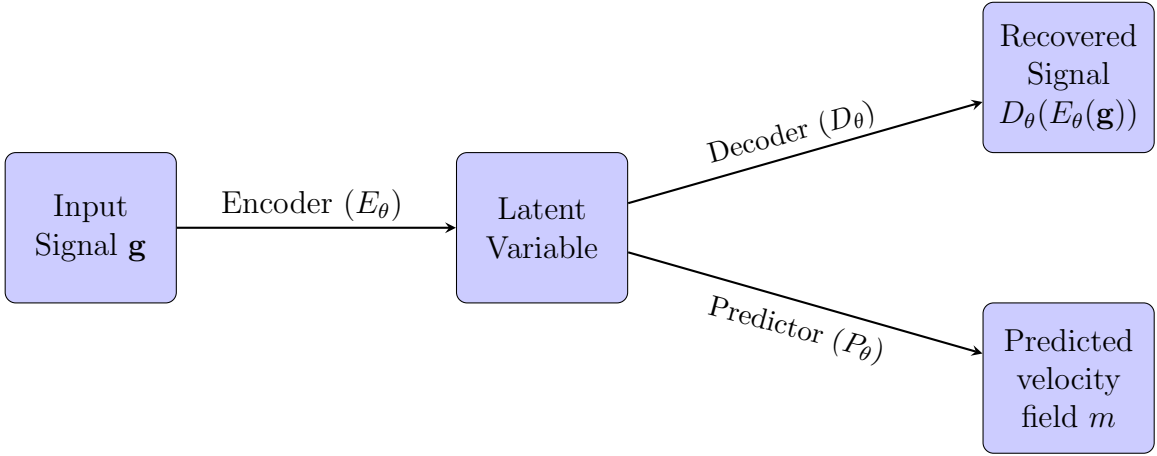
\begin{figure}[!htb]
	\centering
\begin{tikzpicture}[node distance=2cm]
\tikzstyle{startstop} = [rectangle, rounded corners, minimum width=1cm, minimum height=2cm,text centered,text width=2cm, draw=black, fill=blue!20];
\tikzstyle{arrow} = [thick,->,>=stealth];
\node (input)[startstop]{Input Signal $\bg$};
\node (latent)[startstop,text=black,right of=input,xshift=4cm]{Latent Variable};
\node (recov)[startstop,right of=latent,xshift=5cm, yshift=2cm]{Recovered Signal $D_\theta(E_\theta(\bg))$};
\node (pred)[startstop,right of=latent,xshift=5cm, yshift=-2cm]{Predicted velocity field $m$};
\draw [arrow] (input) -- node[anchor=east,xshift=1.3cm, yshift=3mm]{Encoder ($E_\theta$)}(latent);
\draw [arrow,bend left] (latent) -- node[anchor=east,xshift=1.3cm, yshift=4mm]{}(recov);
\draw [arrow] (latent) -- node[anchor=east,xshift=1.3cm, yshift=-4mm]{}(pred);
\end{tikzpicture}
\put(-170,110){\rotatebox{15}{\small Decoder ($D_\theta$)}}
\put(-170,53){\rotatebox{345}{\small Predictor ($P_\theta$)}}
	\caption{Network flow for learning the approximate inverse operator. Training objective is to select $\theta$ such that $\bg=D_\theta(E_\theta(\bg))$ and $m=P_\theta(E_\theta(\bg))$ for every datum pair ($\bg$, $m$).}
	\label{FIG:Training}
\end{figure}
We construct an autoencoder network scheme to represent the inverse operator. The learning architecture contains three major substructures: an encoder network $E_\theta$, a decoder network $D_\theta$, and an additional predictor network $P_\theta$; see Figure~\ref{FIG:Training} for an illustration of the network flow. More information on the construction of the encoder, the decoder and the predictor is documented in Appendix~\ref{SEC:Appendix C}. The encoder-decoder substructure is trained to regenerate the input data, while the predictor reads the latent variable to predict the velocity field $m$. In terms of the input-output data, the network training aims at finding the network parameter $\theta$ such that
\begin{equation}
    \bg_j= D_\theta(E_\theta(\bg_j))\ \ \mbox{and} \ \ m_j= P_\theta(E_\theta(\bg_j)),\ \ 1\le j\le N\,.
\end{equation}
This is done by a minimization algorithm that minimizes a combined $\ell^1$-$\ell^2$ loss function with the $\ell^1$ loss for the encoder-decoder substructure while $\ell^2$ for the encoder-predictor substructure. More precisely, we train the network by solving
\begin{equation}
    \begin{matrix}
     &\displaystyle  \wh \theta=\argmin_{\theta\in\Theta} \wt \cL(\theta),\\[0ex]
     \mbox{with}&\\[1ex]
	&\wt \cL(\theta) := \dfrac{1}{N}\dsum_{j=1}^N \|\bg_j-D_\theta(E_\theta(\bg_j))\|_{\ell^1} +\dfrac{1}{2N}\dsum_{j=1}^N \|\bmu\circledast \big(\fm_j-P_\theta(E_\theta(\bg_j))\big)\|_{\bbM}^2 \,,
    \end{matrix}
\end{equation}
\RED{where $\|\cdot\|_{\bbM}$ denotes the Euclidean norm taken over the feature vector $\bbM$.} While the $\ell^1$ loss for the encoder-decoder substructure is standard in the learning literature, the second part of the loss function is simply what we introduced in~\eqref{EQ:Learning Opt}. Once the training is performed, the approximated inverse is taken as
\[
	\bff_{\wh \theta}^{-1}: = P_{\wh \theta} \circ E_{\wh \theta}\,.
\]

Let us emphasize that the main motivation for us to adopt this autoencoder framework, instead of directly training a network for $\bff_{\wh \theta}^{-1}$, is to take advantage of the commonly observed capability of autoencoders to identify lower dimension features from high-dimensional input data. That is, very often, one can train the autoencoder such that the latent variable $E_\theta(\bg)$ contains most of the useful information in $\bg$ but has a much lower dimension than $\bg$. This lowers the dimension of the predictor network and therefore makes it easier to train the overall network. Moreover, the weighted optimization we used in the encoder-predictor substructure further stabilizes the learning process by focusing on matching the lower-frequency components of the output.

\subsection{Learning-assisted FWI inversion}\label{SEC:algorithms}

To implement the preconditioned FWI reconstruction method, that is, the solution to the least-squares optimization problem~\eqref{EQ:Min}, we tested two different algorithms. 

\paragraph{Quasi-Newton method with adjoint state.} We implemented a quasi-Newton method based on the BFGS gradient update rule~\cite{ReBaHi-SIAM06} for the numerical reconstruction. This BFGS optimization algorithm itself is standard, so we will not describe it in detail here. The algorithm requires the gradient of the objective function $\Phi(m)$ defined in~\eqref{EQ:Obj}. We evaluate the gradient with a standard adjoint state method. The procedure is documented in Algorithm~\ref{ALG:Adjoing-Gradient} of Appendix~\ref{SEC:Appendix A}. The main complication that the learning stage brings into the adjoint state calculation is that we will need the transpose of the gradient of the neural network with respect to its input. This imposes restrictive accuracy requirements on the training of the neural network in the sense that we need the network to learn not only the map from measurement to the velocity field but also the derivative of the operator.  
 
\paragraph{Neumann series method.} The informal Neumann series method based on ~\eqref{EQ:Neumann} is more training-friendly since it does not require the adjoint operator of the learned approximate inverse $\wh \bff_{\wh\theta}^{-1}$. This is why we use the method in computation, \emph{even though we do not have a theory about the convergence of the series}. We implemented a $J$-term truncated Neumann series approximation
\begin{equation}\label{EQ:Neumann J}
	\wh m_{(i)} =\sum_{j=0}^{J-1} K^j\big(\wh \bff_{\wh\theta}^{-1}(\bg^\delta)\big)\,.
\end{equation}
The computational procedure is summarized in Algorithm~\ref{ALG:Neumann} of Appendix~\ref{SEC:Appendix B}.

\section{Numerical experiments}
\label{SEC:Num}

We now present some numerical simulations to illustrate some of the main characteristics of the proposed framework of coupling deep learning with model-based FWI reconstruction. We fix the computational domain to be $\Omega = [0,1]\times[-1,0]$, that is, $L = H = 1$. In this proof-of-concept study, we use acoustic source functions that can generate data at all frequencies. We leave it as future work to consider the situation where low-frequency wavefield data are impossible to measure in applications such as seismic imaging.

\subsection{Velocity feature models}
 
In this work, we consider two different feature models for the output velocity field of the neural network.

\paragraph{Generalized Fourier feature model.} In the first model, we represent $m(\bx)$ as linear combinations of the Laplace-Neumann eigenfunctions on the computational domain $\Omega$. To be precise, let $(\lambda_\bk, \varphi_\bk)$ ($\bk=(k_x, k_z)\in \bbN_0\times \bbN_0$) be the eigenpair of the eigenvalue problem:
	\begin{equation*}\label{EQ:Laplace}
	-\Delta \varphi = \lambda \varphi, \quad \mbox{in}\ \ \Omega, \qquad \bn\cdot\nabla \varphi = 0, \quad \mbox{on}\ \ \partial \Omega\,.
	\end{equation*}
	where $\bn(\bx)$ is the unit outward normal vector of the domain boundary at $\bx\in\partial\Omega$. Then $\lambda_\bk=\left({k_x\pi}\right)^2+\left({k_z\pi}\right)^2$, and
	\[
	\varphi_\bk (x, z) = \cos({k_x\pi} x) \cos({k_z\pi} z)\,.
	\]
	In our numerical simulations, we take 
	\begin{equation}\label{EQ:Velocity Model 1} 
	m(\bx) = \sum_{k_x, k_z=0}^M \fm(\bk) \ \varphi_\bk(x,z)\,, 
	\end{equation}
	for some given $M$. The generation of the random coefficients $\fm(\bk)$ will be described in detail in the next section.

\paragraph{Gaussian mixture model.} The second feature model we take is the Gaussian mixture model. More precisely, we represent $m({\bf x})$ as a superposition of Gaussian functions:
\begin{equation}\label{EQ:Velocity Model 2}
	m(\bx) = m_0+ \sum_{k=1}^M c_k e^{-\dfrac{1}{2}(\bx-\bx_0^k)^\fT\Sigma_k^{-1}(\bx-\bx_0^k)}\,.
\end{equation}
With a small number of highly localized Gaussians, successful reconstruction of such a model could provide insight into source locating problems in seismic applications~\cite{ChChWuYa-JCP18}. This is the main motivation for us to consider this model.

\subsection{Learning dataset generation} \label{SUBSEC:Data Generation}

To generate training data, we generate a set of velocity fields and then solve the wave equation model~\eqref{EQ:Wave Equation} with source functions $\{h_s\}_{s=1}^{N_s}$ to get the corresponding wave field data at the detectors.

\paragraph{Generating velocity fields.} We first construct a set of $N$ random velocity fields $\{m_j\}_{j=1}^N$ using the representation~\eqref{EQ:Velocity Model 1} or~\eqref{EQ:Velocity Model 2}. We do this by randomly choosing the coefficients $\{\fm(\bk)\}_{\bk\in\bbN_0\times\bbN_0}$ from the uniform distribution~$\cU[-0.5, 0.5]$ when considering the model~\eqref{EQ:Velocity Model 1} and the coefficients $c_k$ from $\cU[0, 5]$, $\bx_0^k$ from $\cU(-H, 0)\times\cU(0, L)$, $(\Sigma_k)_{ij}$ from $\cU[0, 0.2] + 0.1$ and $m_0 = 10$ when using the model~\eqref{EQ:Velocity Model 2}. To mimic the frequency content of realistic velocity fields, we force the coefficient $\fm(\bk)$ in the random Fourier model~\eqref{EQ:Velocity Model 1} to decay asymptotically as
\begin{equation}\label{EQ:decay_rule}
    \fm(\bk) \sim \fm(\bk)[(k_x+1)(k_z+1)]^{-\alpha},\ \ \mbox{for large}\ \ |\bk|=\sqrt{k_x^2+k_z^2}
\end{equation}
\RED{with $\alpha\ge 0$ given in the concrete examples later, and it serves the same purpose and plays the role of this weighting matrix $\bmu$ in \eqref{EQ:Learning Opt}. Note that while we require the Fourier modes to decay with the frequency, the algebraic rate is slow enough to keep many modes relevant in the modeling and inversion process.}

To make sure that the velocity fields we generated are physically meaningful, we rescale them so that the velocity lives in a range $[\underline{m}, \overline{m}]$ ($0<\underline{m}<\overline{m}<+\infty$). The linear rescaling is done through the operation
\begin{equation}\label{EQ:Scaling}
    m(\bx) \leftarrow \dfrac{\overline{m}-\underline{m}}{m^*- m_*}m(\bx)+\dfrac{\underline{m}m^*-\overline{m} m_*}{m^*-m_*},
\end{equation}
where $\displaystyle m^*:=\max_\bx m(\bx)$ and $\displaystyle m_*:=\min_\bx m(\bx)$. 

In Figure~\ref{FIG:Velo Samples} we show some typical samples of the velocity field generated from the aforementioned process. The top panel of Figure~\ref{FIG:Velo Samples} shows the surface plots of $4$ different randomly generated velocity fields using the model~\eqref{EQ:Velocity Model 1} with $M = 4$. The bottom panel presents the surface plots of $4$ random realizations of the velocity field given by the model~\eqref{EQ:Velocity Model 2} with $M = 2$. Random noise at different levels will be added to the sampled velocity fields to study the generalization of the learning scheme we have. The exact level of noise will be given later in concrete examples.
\begin{figure}[!htb]
	\centering
    \includegraphics[width=0.24\textwidth,trim=0cm 2cm 0cm 1cm,clip]{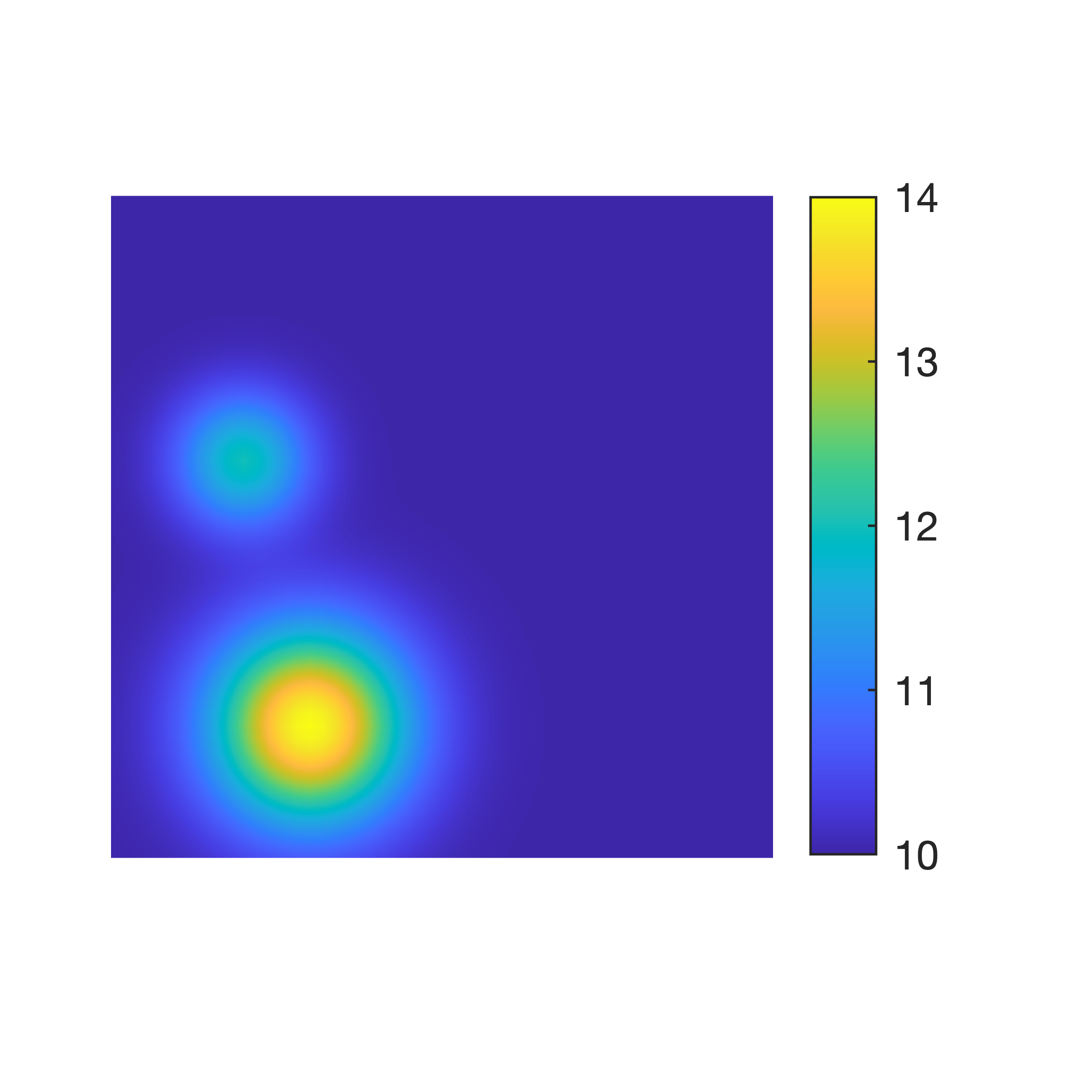}
	\includegraphics[width=0.24\textwidth,trim=0cm 2cm 0cm 1cm,clip]{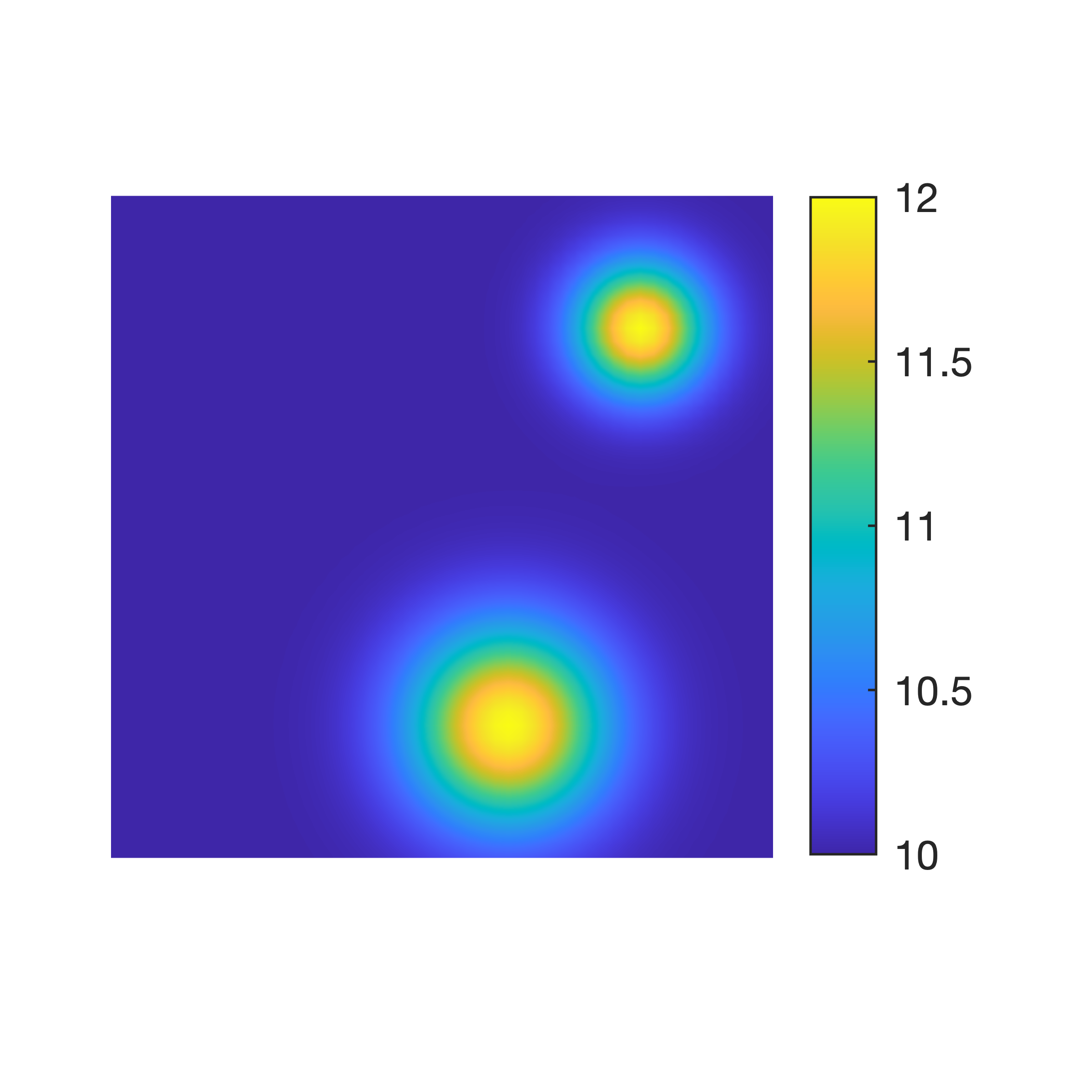}
	\includegraphics[width=0.24\textwidth,trim=0cm 2cm 0cm 1cm,clip]{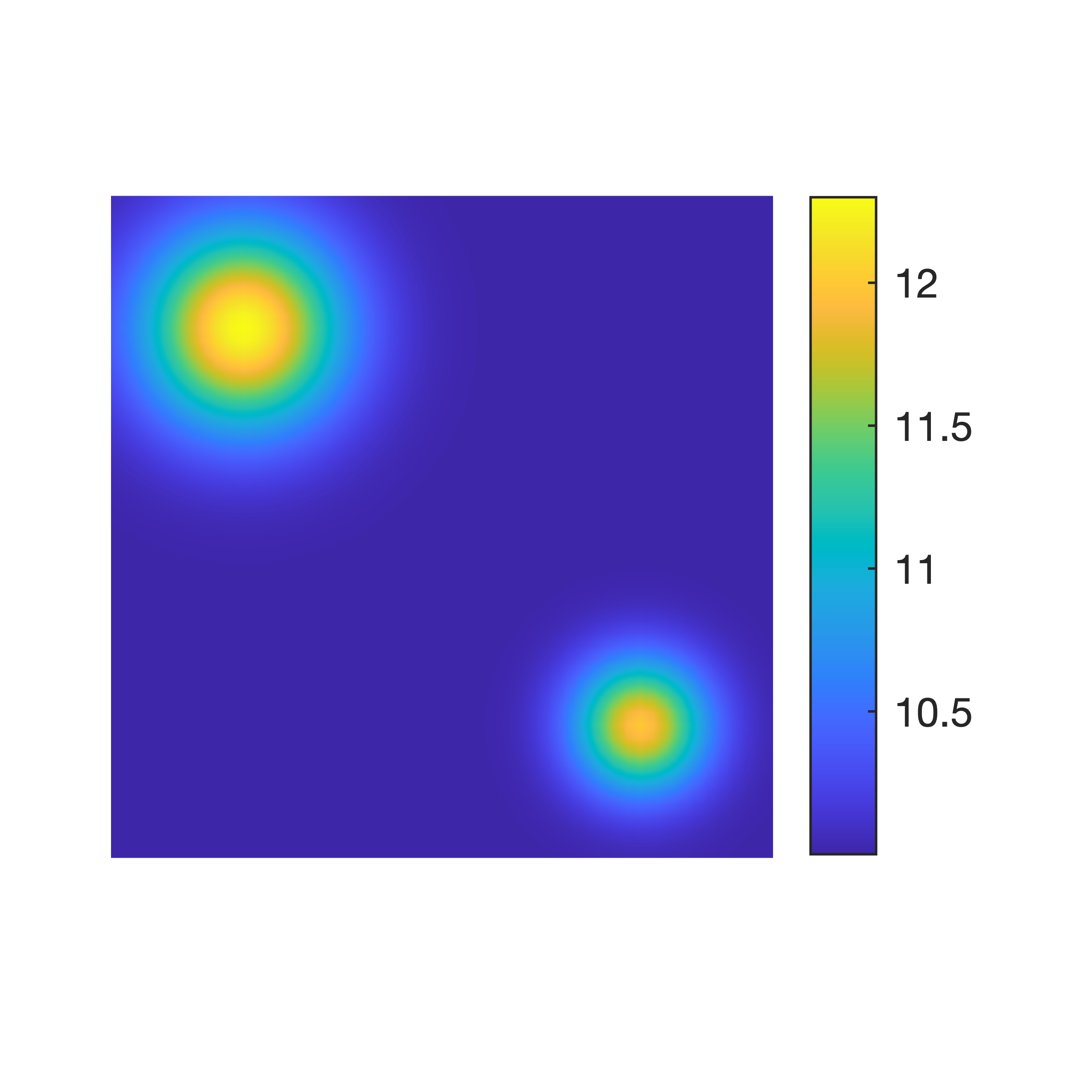}
	\includegraphics[width=0.24\textwidth,trim=0cm 2cm 0cm 1cm,clip]{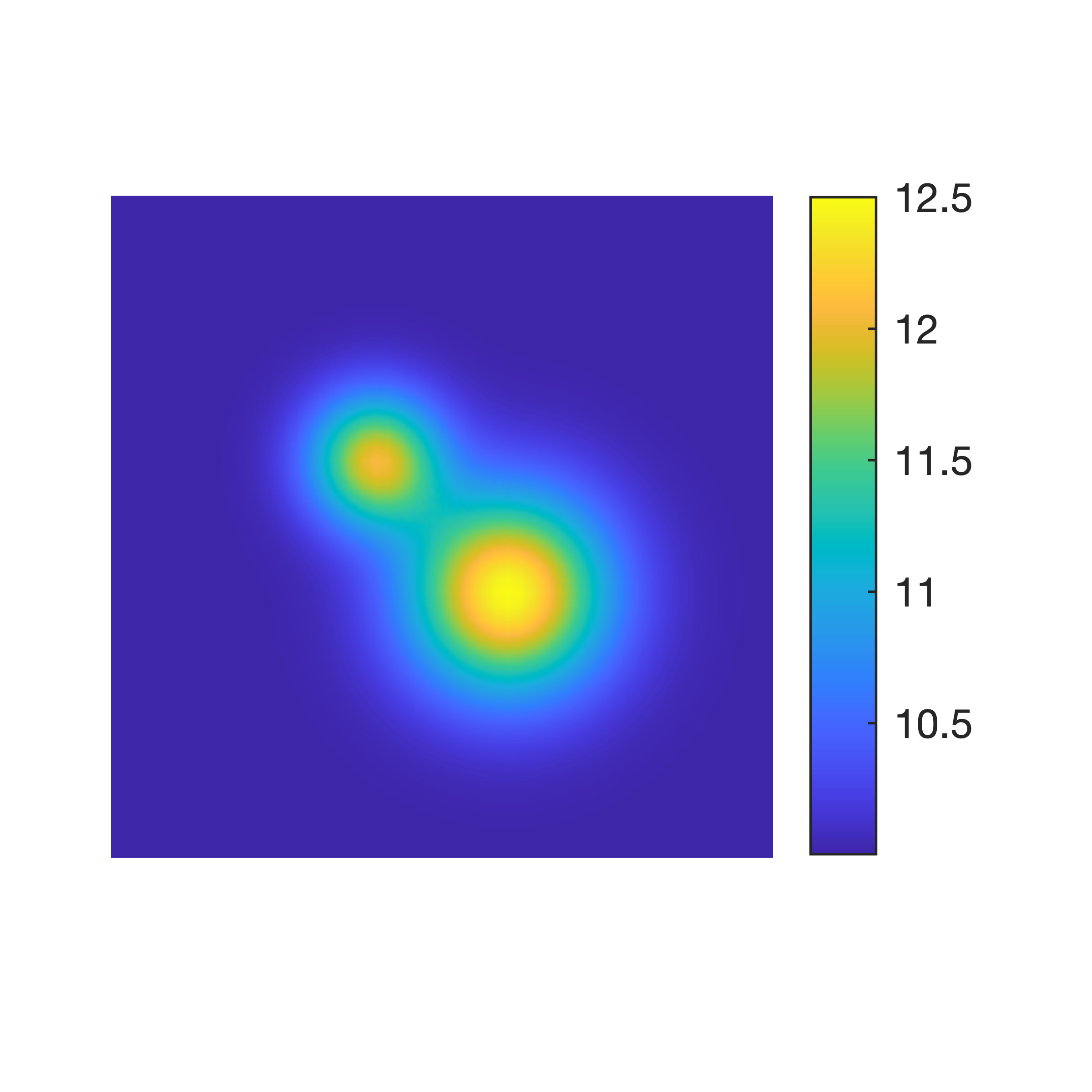}\\
	\includegraphics[width=0.24\textwidth,trim=0cm 2cm 0cm 1cm,clip]{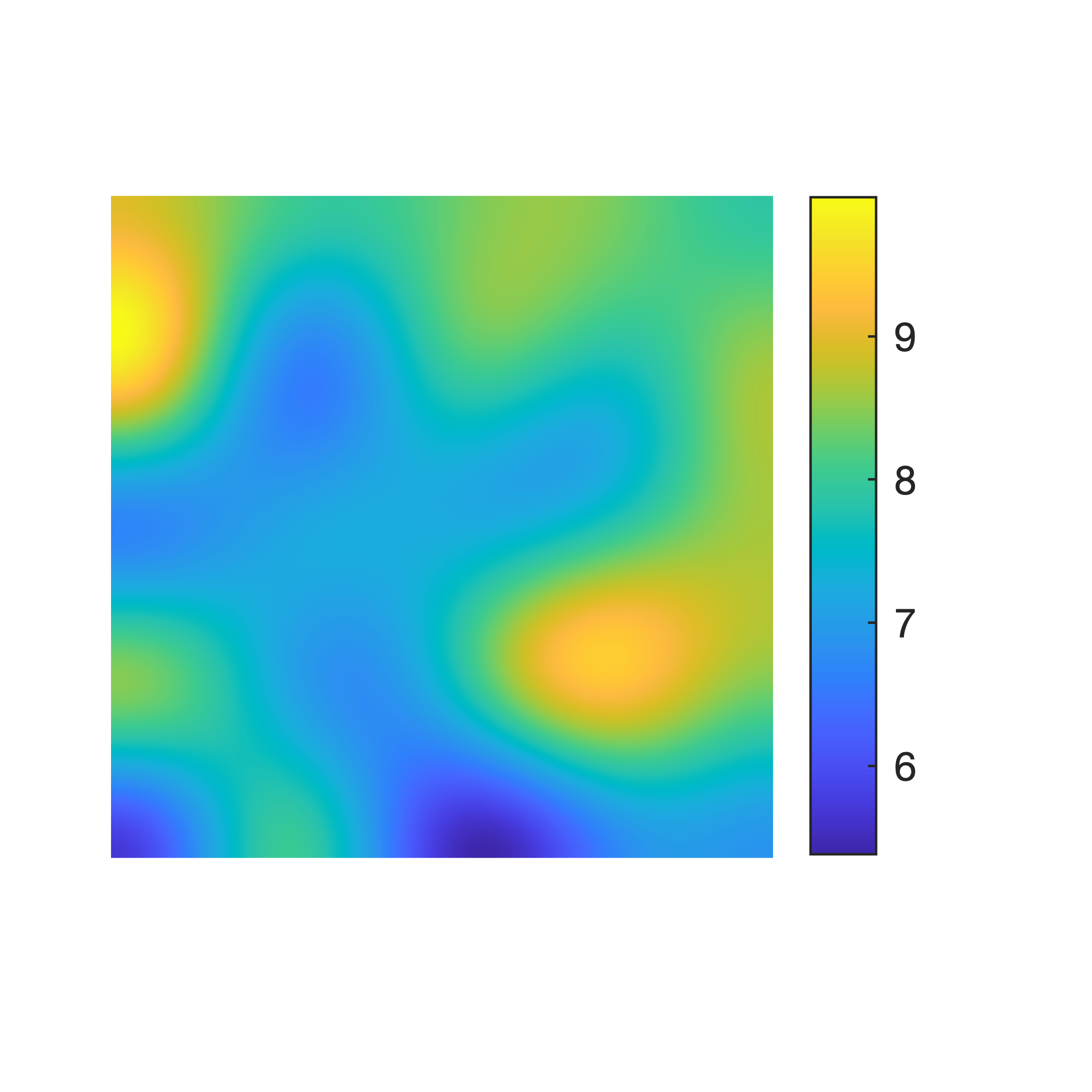}
	\includegraphics[width=0.24\textwidth,trim=0cm 2cm 0cm 1cm,clip]{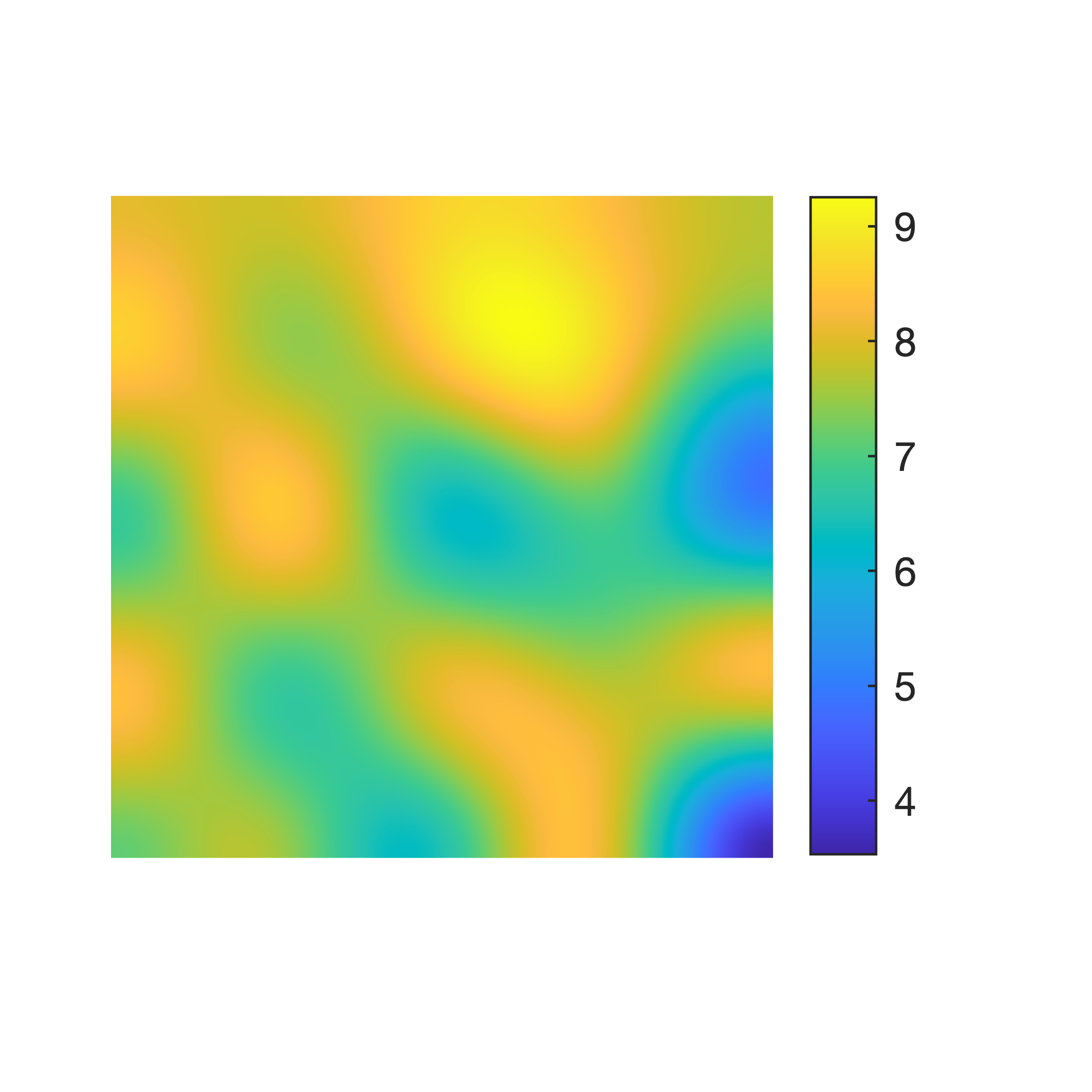} 
	\includegraphics[width=0.24\textwidth,trim=0cm 2cm 0cm 1cm,clip]{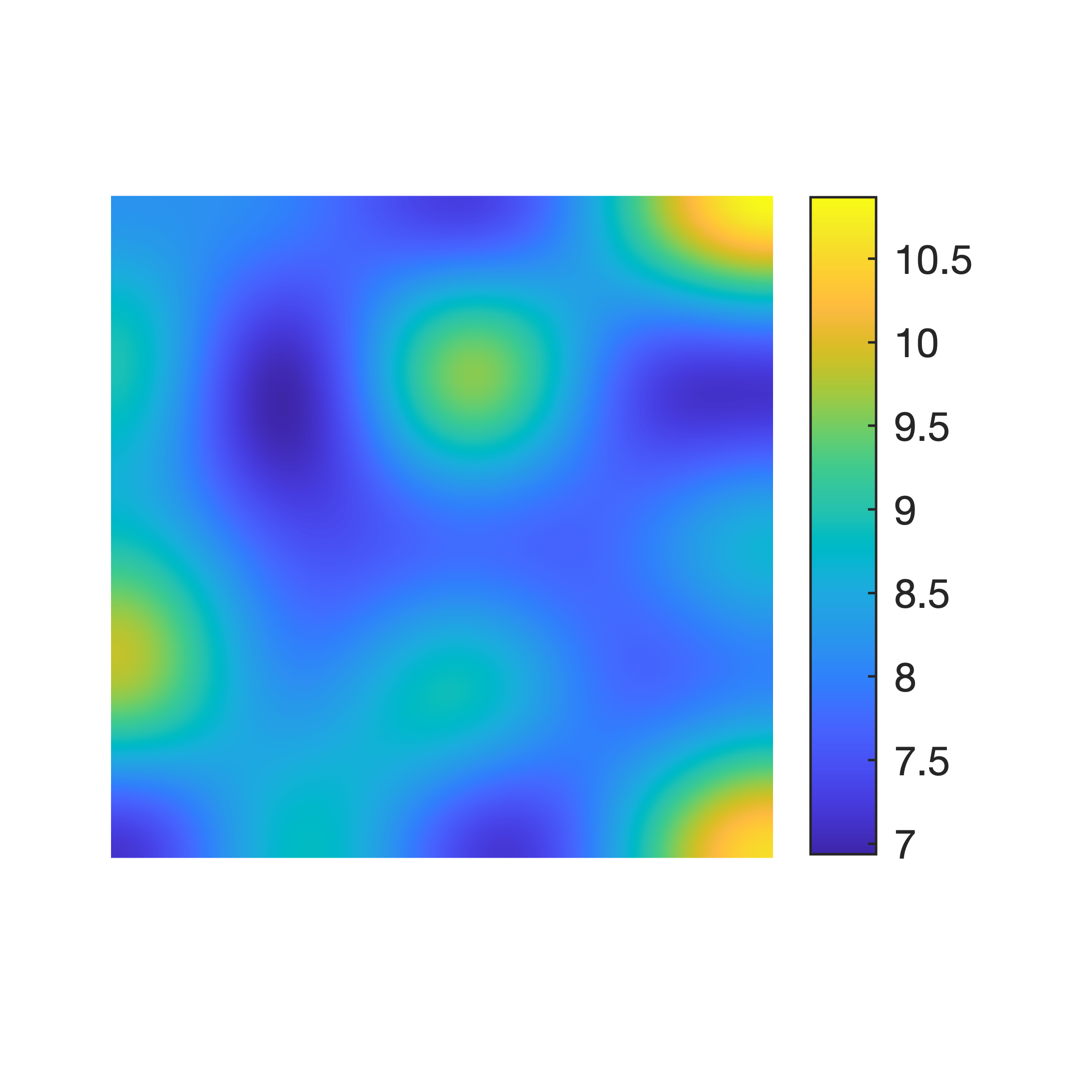}
	\includegraphics[width=0.24\textwidth,trim=0cm 2cm 0cm 1cm,clip]{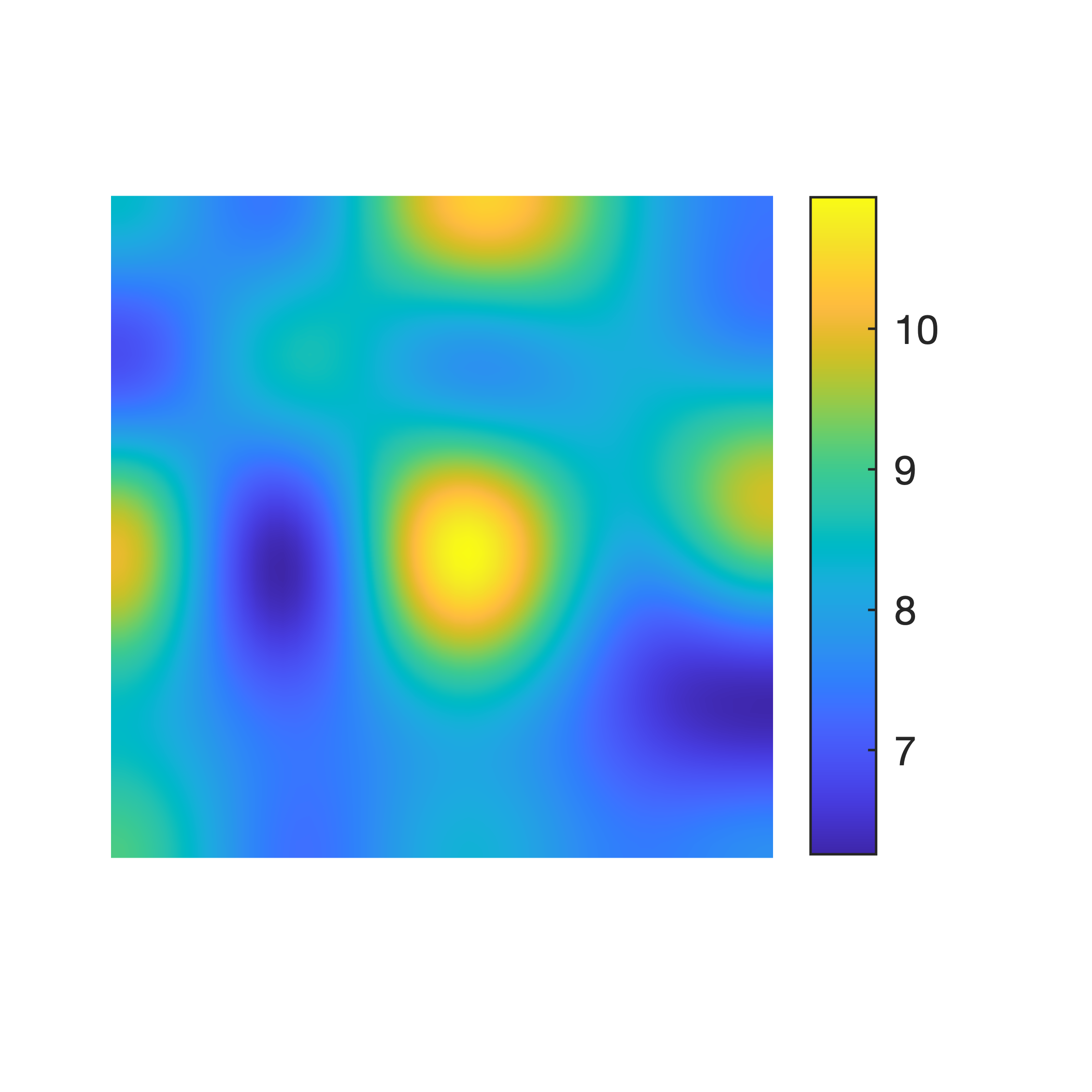}
	\caption{Random samples of the velocity field for training of the neural networks. Top row: velocity fields generated from~\eqref{EQ:Velocity Model 1} with $M = 2$; bottom row: velocity fields generated from~\eqref{EQ:Velocity Model 2} with $M = 4$.}
	\label{FIG:Velo Samples}
\end{figure}
 
\paragraph{Finite difference scheme for the wave equation.} We use the time-domain stagger-grid finite difference scheme that is second-order in both the time and the spatial directions to solve the wave equation~\eqref{EQ:Wave Equation}. Precisely, the discretization is performed with elements over the Cartesian grids formed by $(x_k,z_l) = (k\Delta x,l\Delta z),k,l = 0,1,...,N_L$ with $\Delta x= L/N_L$ and $\Delta z = H/N_L$. The receivers are equally placed at the bottom surface, coinciding with the grid points, as documented in the right panel of Figure~\ref{FIG:Setup}, namely, there are $N_L+1$ receivers for each velocity model. We then record the wave signal starting at time $t_0$ and take another shot every $j\Delta t$ until the termination time $T$; here, $j$ is a positive integer, and $\Delta t$ is the uniform time step size for the forward wave solver. As an example for illustration, we take 
\begin{equation}\label{surface_source}
h(t,x) = e^{\frac{-(x - 0.6)^2}{0.01}}-e^{\frac{-(x - 0.3)^2}{0.01}}
\end{equation}
to be the top source in~\eqref{EQ:Wave Equation} and present the recorded time series wave signals in Figure~\ref{samples_signal}. Table~\ref{signal_generate} summarizes the parameters we used to generate these wavefield signals.
\begin{table}
 	\begin{center}
 		\scalebox{1.0}{
 			\begin{tabular}{c c c c c c c}
 				\hline
 				$L$ & $H$ & $N_L$ & $\Delta t$ & $t_0$ & $j$ & $T$ \\
 				\hline
 				$1$ & $1$ & $50$ &$0.0005$&  $0$ & $20$ & $0.5$\\
 				\hline
 			\end{tabular}
 		}
 	\end{center}
 	\caption{Values of parameters in the spatial and temporal discretization of the wave equation and the time node of the recorded wave signal.}
 	\label{signal_generate}
\end{table}

Figure~\ref{samples_signal}, from the left panel to the right panel, shows the time series wave signals at the bottom surface generated from the velocity model satisfying (\ref{EQ:Velocity Model 1}) with $M = 4$, and the velocity model satisfying (\ref{EQ:Velocity Model 2}) with $M = 2$, respectively; from the top panel to the bottom panel are the wave signals without noise, with $10\%$ multiplication Gaussian noise, and with $10\%$ additive Gaussian noise, respectively.
 \begin{figure}[!hbt]
 	\centering
    \includegraphics[width=0.48\textwidth,trim=1cm 0cm 1cm 0cm,clip]{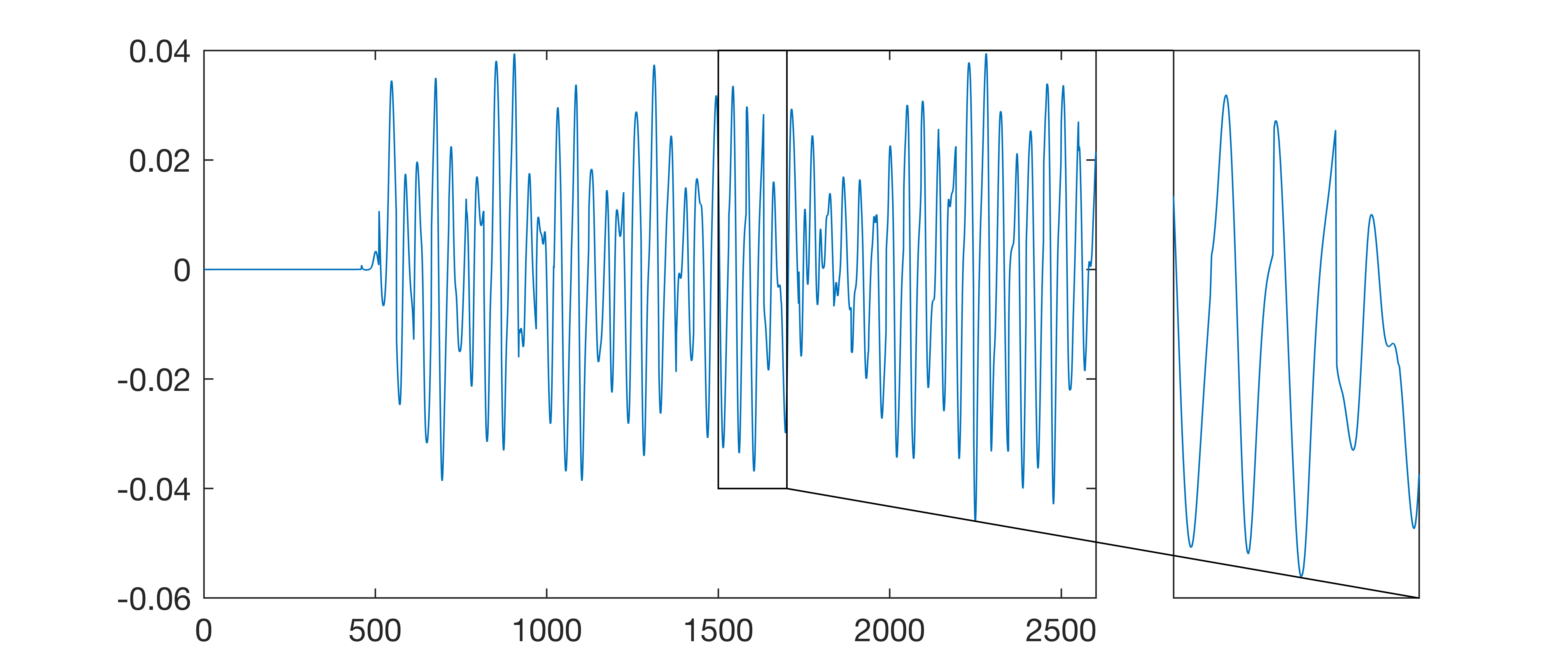}
 	\includegraphics[width=0.48\textwidth,trim=1cm 0cm 1cm 0cm,clip]{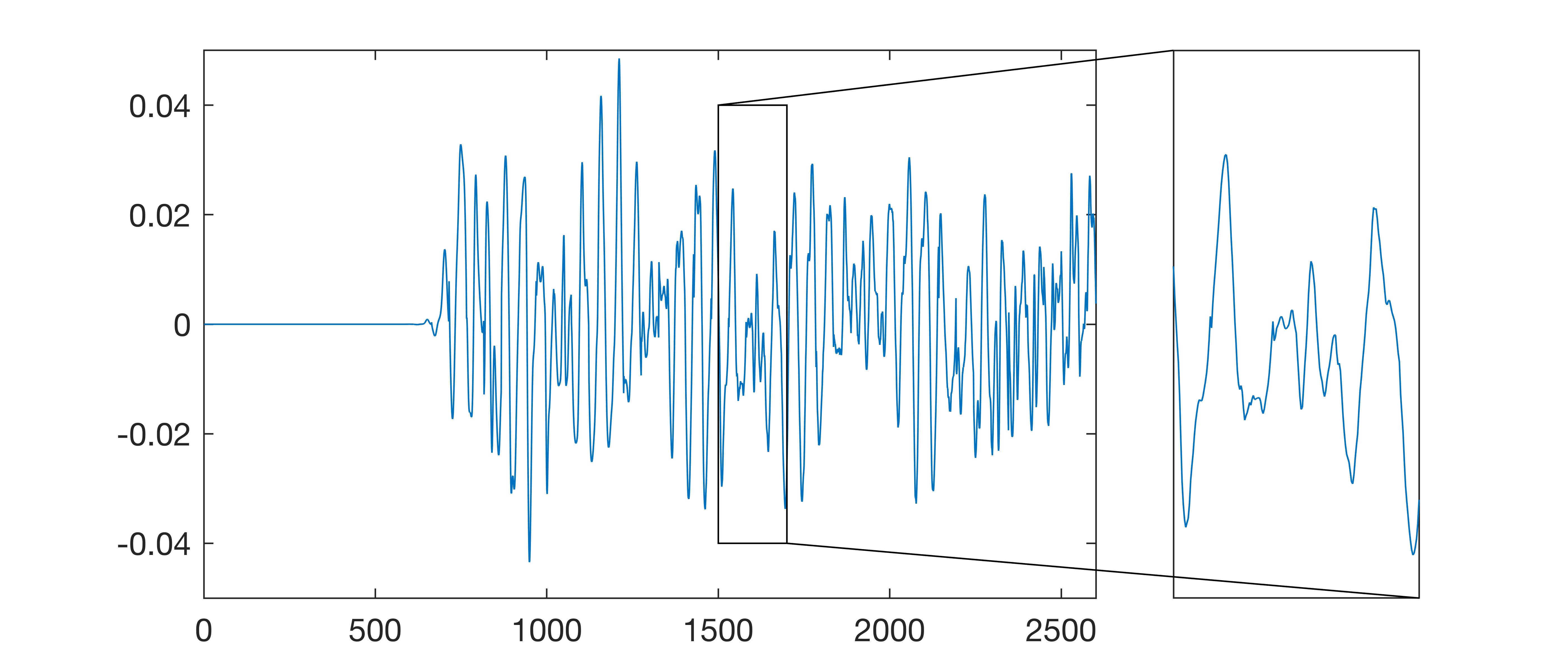}\\
    \includegraphics[width=0.48\textwidth,trim=1cm 0cm 1cm 0cm,clip]{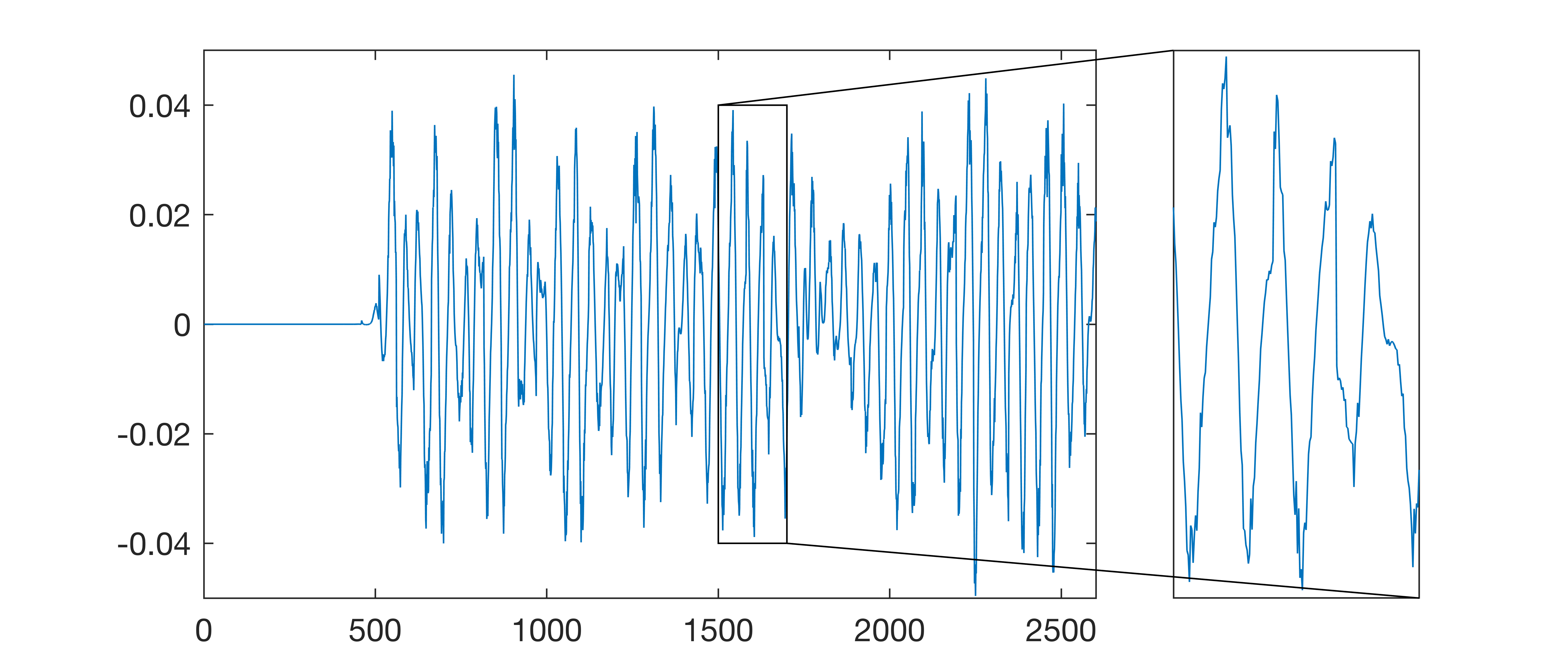} 
 	\includegraphics[width=0.48\textwidth,trim=1cm 0cm 1cm 0cm,clip]{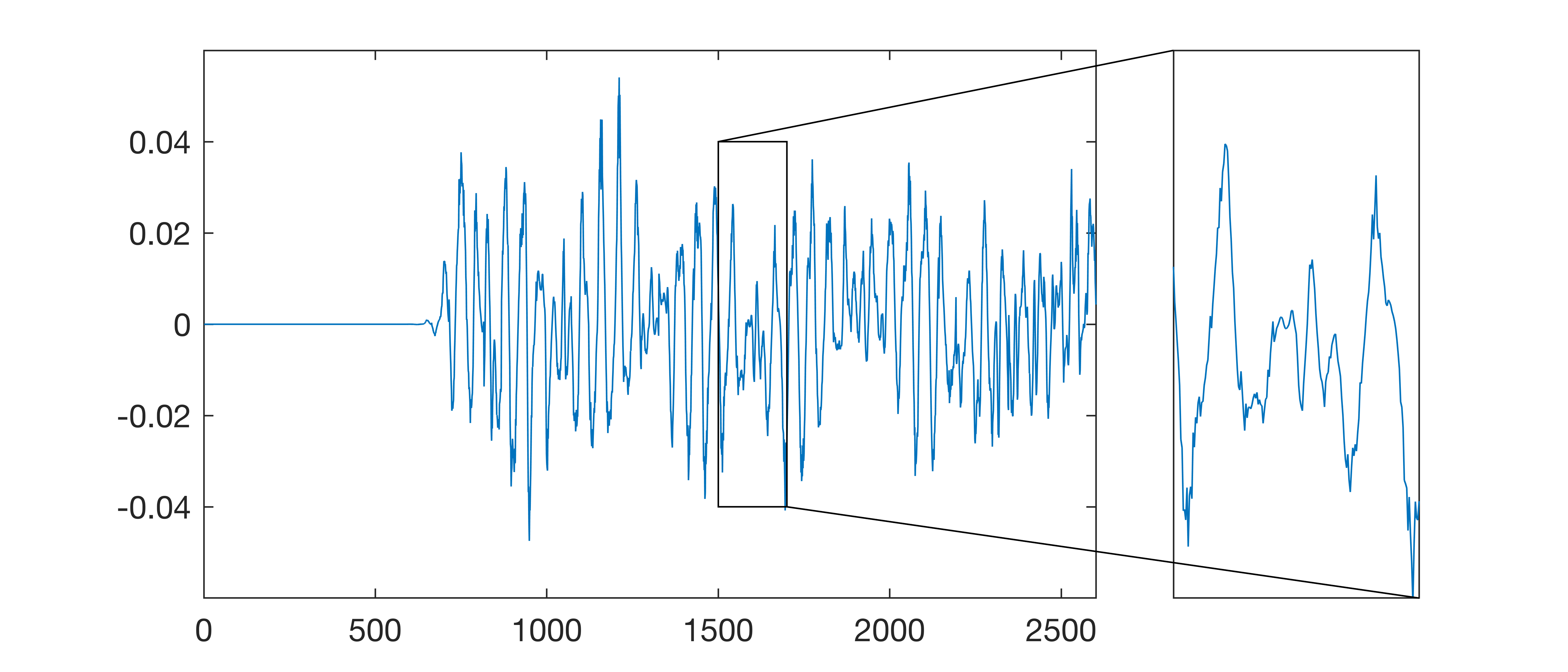} \\
    \includegraphics[width=0.48\textwidth,trim=1cm 0cm 1cm 0cm,clip]{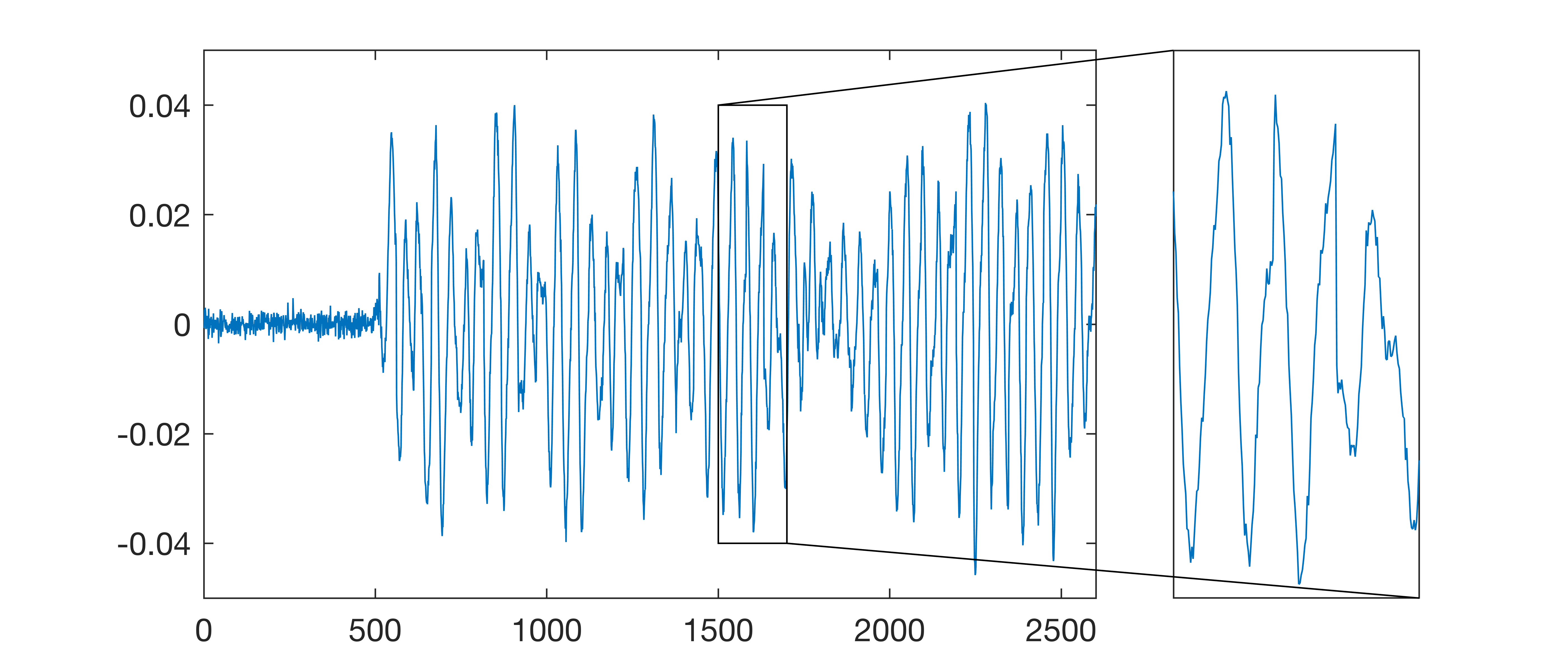}
 	\includegraphics[width=0.48\textwidth,trim=1cm 0cm 1cm 0cm,clip]{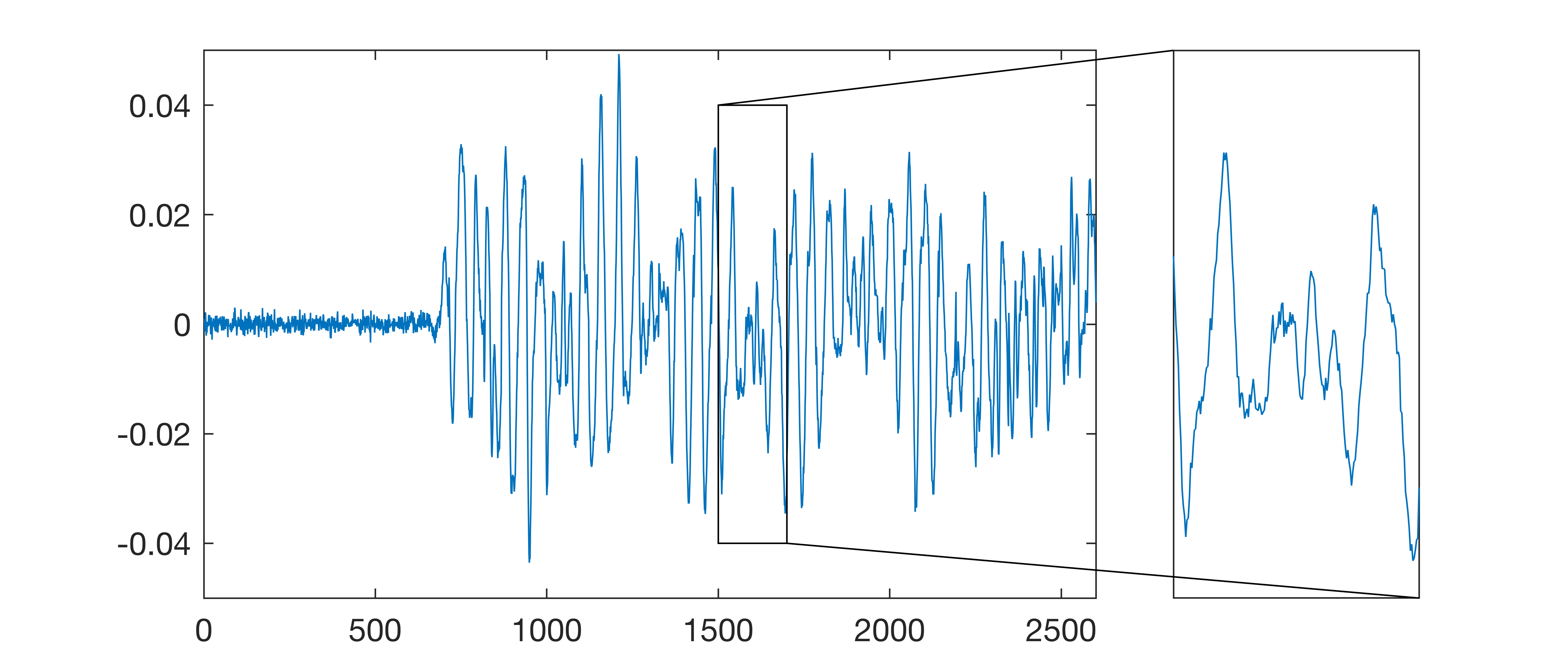} 
 	\caption{\small{The left panel presents time series wave signals at the bottom surface generated from a velocity model satisfying (\ref{EQ:Velocity Model 1}) with $M = 2$, while the right panel shows time series wave signals at the bottom surface generated from a velocity model constructed by (\ref{EQ:Velocity Model 2}) with $M = 4$. From the top to the bottom are time series wave signals without noise, with $10\%$ multiplication Gaussian noise and with $10\%$ additive Gaussian noise, respectively. }}\label{samples_signal}
 \end{figure}

 Last, we note that to obtain a reliable learning dataset, one needs to guarantee the stability of the time integrator when solving \eqref{EQ:Wave Equation}. Recall that the second-order time-domain stagger-grid finite difference forward wave solver is stable under the following CFL condition
 \begin{equation}\label{cfl}
 \Delta t \leq \frac{\min\{\Delta x, \Delta z\}}{\sqrt{2}\max_{\bf x}\{m({\bf x})\}}.
 \end{equation}
 To guarantee the stability of the forward solver for all velocity samples, we force
 \[\Delta t = \Delta t^\ast < \frac{\min\{\Delta x, \Delta z\}}{\sqrt{2}\max_{\bf x}\{\overline{m}({\bf x})\}},\]
where $\overline{m}$ is used in the scaling~\eqref{EQ:Scaling}, for the data generation of the offline training stage. In this work, we set $\Delta t^\ast = 0.0005$ as shown in Table \ref{signal_generate} based on our setting.
 
\subsection{Training and testing performance}

We now present a systematic numerical exploration on the training and testing performance of the offline training stage. Given that the training and application of the Gaussian mixture velocity model~\eqref{EQ:Velocity Model 2} with a small amount of Gaussian functions is extremely successful (due to the smallness of the parameter space) according to our numerical experience, we will focus on the training of the generalized Fourier velocity model~\eqref{EQ:Velocity Model 1}.

\paragraph{Training dataset size.} We first emphasize that the training results we show in this section are obtained on a very small dataset in the following sense. The number of data points in the artificial dataset $\{\bg_j, m_j\}_{j=1}^N$ is small with $N=10^6$. Moreover, for each $m_j$, we collect the wavefield from $N_s=3$ illumination sources and $N_d=51$ detectors. Those source-detector pairs are a subset of the source-detector pairs for the dataset we used in the reconstruction step. Moreover, at each detector, we use only data at $51$ time steps out of the $1000$ time steps in the numerical solutions. This small dataset is used so that we can handle the computational cost of the training process with our limited computing resources. It is also intentionally done to demonstrate that one can train reasonable approximate inverse with a significantly smaller dataset if one is willing to sacrifice a little of the training accuracy. 

\paragraph{Training-testing dataset split.} We perform a standard training-validation cycle on the neural network approximate inverse. Before the training process starts, we randomly split the artificial dataset of $N=10^6$ data points into a training dataset and a testing dataset. The training dataset takes $80\%$ of the original dataset, while the test dataset takes the rest $20\%$ of the data points. The training dataset and the validation dataset have no intersection, namely, no data points in the validation set are present in the training dataset.

\subsubsection{Random Fourier velocity model: case of non-decaying coefficients}

We start with the most challenging scenario where we train the neural network to approximate the inverse operator for the velocity model~\eqref{EQ:Velocity Model 1} with randomly generated Fourier coefficients without any decay requirement on the coefficients, that is, we set the decay rate $\alpha=0$ in~\eqref{EQ:decay_rule}. This is an extremely challenging case because the effective parameter space of this class of velocity models grows exponentially with respect to the number of Fourier models we have in the model. Ideally, one would need an exponentially large training dataset in order to have reasonable training results. However, due to the smooth property of the map $\bff: m\mapsto \bg$, we demonstrate below that with a relatively small dataset and a very limited number of source-detector pairs and time shots, our training result is fairly encouraging.

\begin{figure}[!htb]
	\centering
    \includegraphics[width=0.8\linewidth]{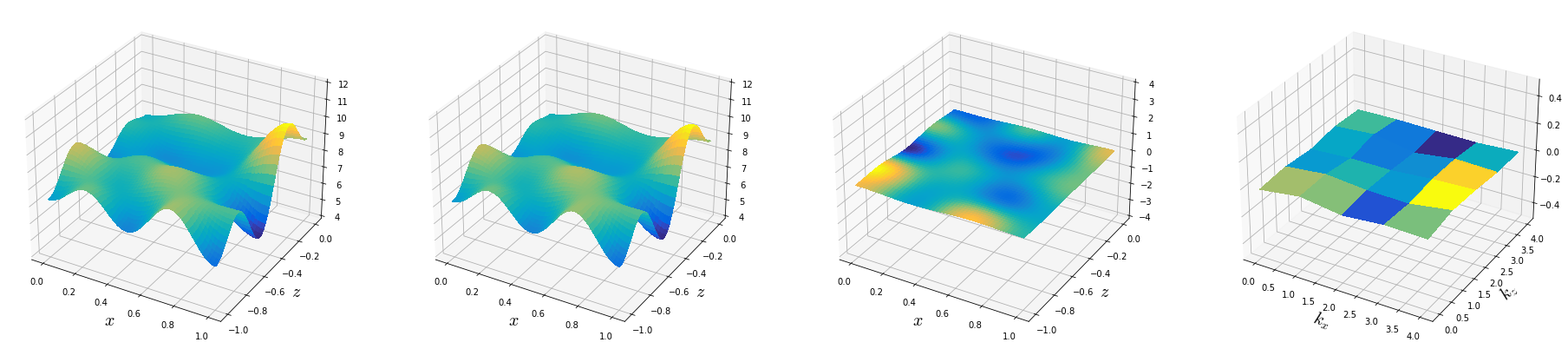}
    \includegraphics[width=0.8\linewidth]{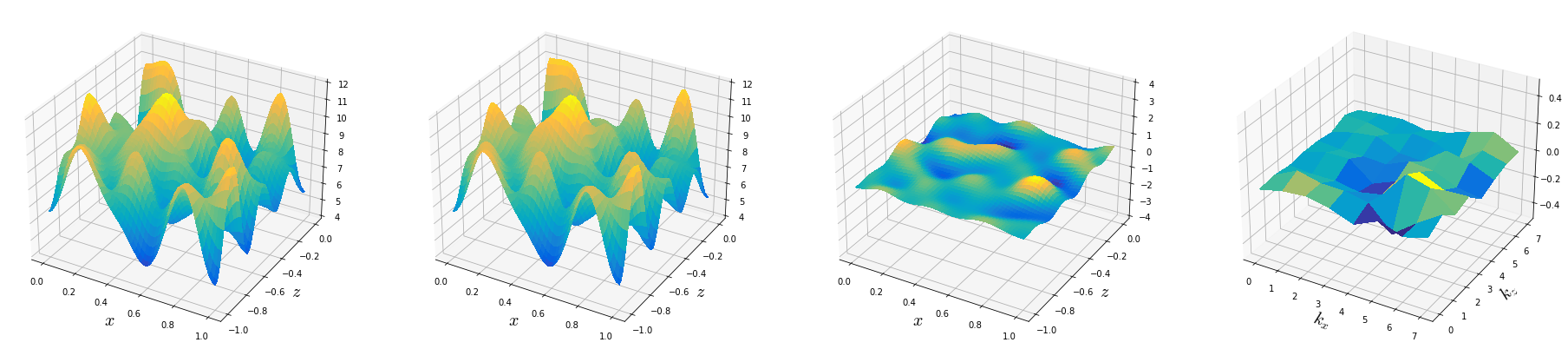}
    \includegraphics[width=0.8\linewidth]{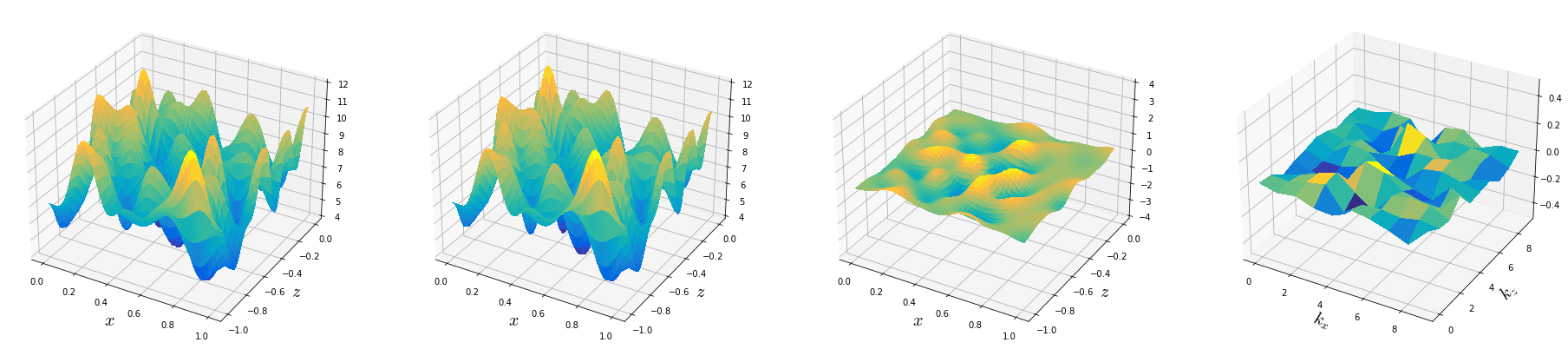}
	\caption{Three randomly selected velocity fields from the testing dataset: $5\times 5$ coefficients Fourier model, $8\times 8$ coefficients Fourier model, $10\times 10$ coefficients Fourier model. All cases have decay rate $\alpha=0$ (column 1), the corresponding predictions by the trained neural network (column 2), the error of the prediction (column 3), and the error in the neural network prediction ($\wt m(\bx)$) in the Fourier domain ($\fm(\bk)-\wt{\fm}(\bk)$) (column 4).}
	\label{FIG:Random Fourier Results}
\end{figure}
In Figure~\ref{FIG:Random Fourier Results}, we show three randomly selected velocity fields ($m$) from the testing dataset, the corresponding neural network predictions ($\wt m=\wh\bff_{\wh\theta}^{-1}(\bff(m))$), and the error in the prediction ($m-\wt m$). The largest number of Fourier modes allowed in these learning processes is $10$, meaning that $0\le k_x, k_z\le 9$ in the velocity model~\eqref{EQ:Velocity Model 1}. The training output is a $10\times 10$ matrix containing the content of $\fm(\bk)$ in~\eqref{EQ:Velocity Model 1}. The output space is therefore $100$-dimensional. A naive visual inspection of the results in Figure~\ref{FIG:Random Fourier Results} shows that the training process is quite successful as the testing errors seem to be pretty reasonable, especially given that our training dataset is fairly small ($0.8\times 10^6$ data points to be precise). While it is expected that when the number of Fourier modes allowed in the velocity model is very large, the validation error will be sufficiently large if we keep the training sample size, we do observe that validation error is quite small in general for cases when less than $10\times 10$ Fourier modes are pursued in the learning process. Increasing computational power would certainly improve training quality.

Let us remark that our training results indeed show that we have better accuracy in learning the low-frequency components of the inverse operator, as we discussed in the previous sections of the work. In the right column of Figure~\ref{FIG:Random Fourier Results}, we provide the Fourier coefficients of the errors in the network prediction. In all velocity fields, we see clearly larger errors in the higher-frequency components of the network velocity recovery. This is a universal phenomenon that we observed over the testing dataset.

\begin{figure}[!htb]
	\centering
	\includegraphics[width=0.6\linewidth]{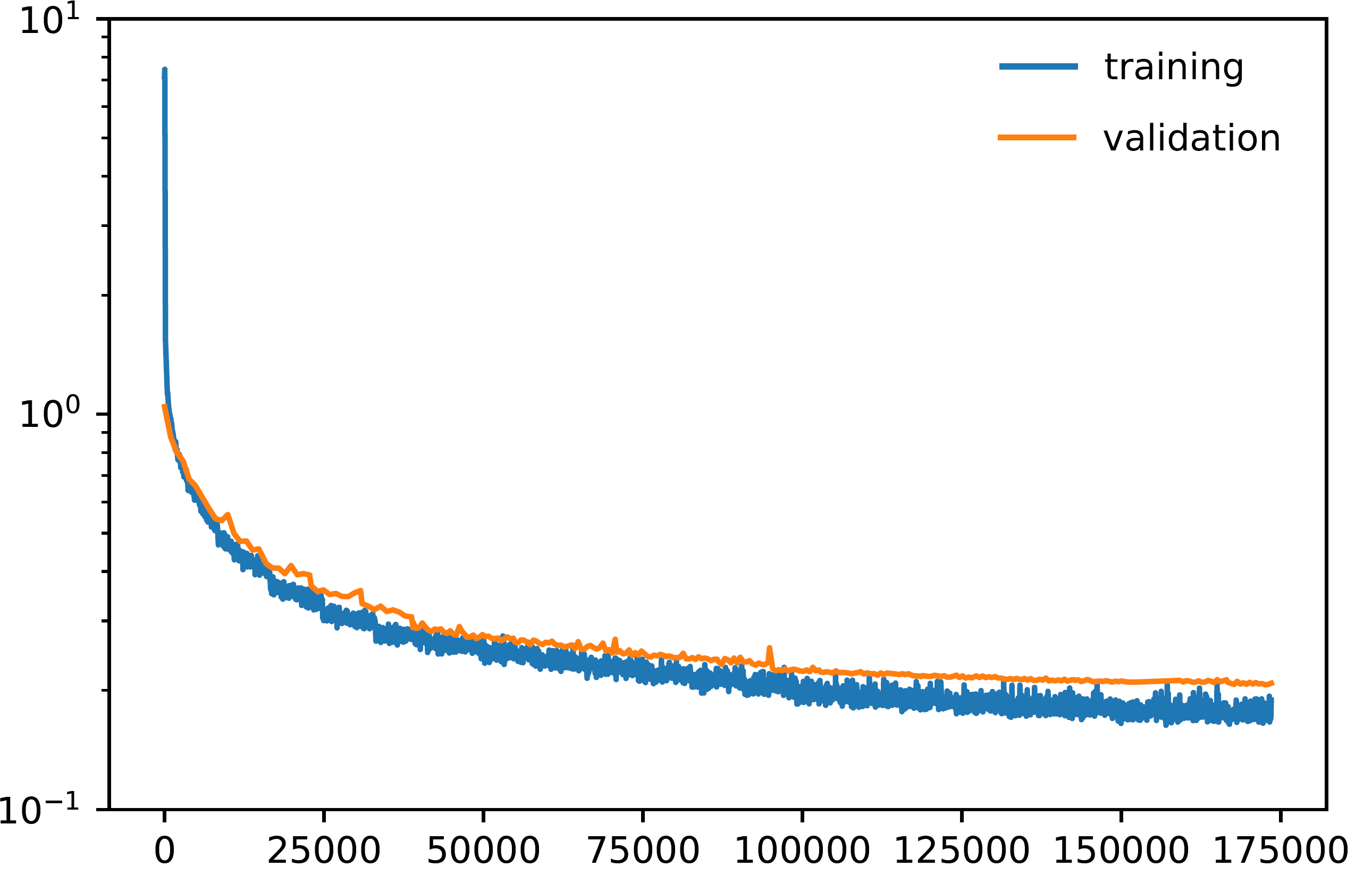}
 	\put(-285,65){\rotatebox{90}{\footnotesize Loss function}}
 	\put(-150,-10){\footnotesize Step $n$}
	\caption{Training and validation loss curves for a typical learning experiment. Very similar curves are observed for each of the learning experiments we performed. }
	\label{fig:training3channel0noise}
\end{figure}
To dive a little more into the training quality and the optimization landscape after applying our neural network preconditioner, we offer in Figure~\ref{fig:training3channel0noise} the training-validation loss curves for a typical learning experiment. We observe very similar curves for training and validating with the velocity model~\eqref{EQ:Velocity Model 1} with different total numbers of Fourier modes. We measure the training accuracy quantitatively with the size of the operator $\cI-\wh{\bff}_{\wh{\theta}}^{-1}\circ \bff$. More precisely, we evaluate the three main quantities for a data point $(\bg, m)$ in the testing dataset:\\[2ex]
(i) The error in the network prediction of Fourier modes of $m$:
\begin{equation*}
    \Delta \fm(\bk) := \fm(\bk)-{\bff}_{\wh{\theta}}^{-1}\circ \bff(m)\,.
\end{equation*}
(ii) The landscape of the classical functional $\Psi(m)$ evaluated along a line in the direction of a given Fourier mode of $m$, $\varphi_\bk$, passing through two different points $m=\bff^{-1}(\bg)$ and $m_{\rm net}=\wh\bff_{\wh\theta}^{-1}(\bg)$:
\begin{equation*}\label{EQ:Psi-New}
\Psi_{\cK}(h;\bk):=\| \bg - \bff(m_b + h\varphi_{\bk})\|^2_{[L^2((0,T]\times \Gamma)]^{N_s}},\ \ m_b = \cK(m),\ \ \ \cK\in \{\cI,\ \wh{\bff}_{\wh{\theta}}^{-1}\circ \bff\}\,.
\end{equation*}
(iii) The landscape under our preconditioner, the new mismatch function $\Phi(m)$ evaluated as in (ii):
\begin{equation*}\label{EQ:Phi-Classical}
    \Phi_{\cK}(h;\bk):=\|\wh{\bff}_{\wh{\theta}}^{-1}(\bg) - \wh{\bff}_{\wh{\theta}}^{-1}\circ \bff(m_b + h\varphi_\bk)\|^2_{L^2(\Omega)},\ \ m_b = \cK(m),\ \ \ \cK\in \{\cI,\ \wh{\bff}_{\wh{\theta}}^{-1}\circ \bff\}\,.
\end{equation*}
When a perfect learning is performed, we would have $\Delta \fm(\bk)=\bzero$ (the zero vector), $\Psi_{\cI}(h;\bk)=\Psi_{\wh{\bff}_{\wh{\theta}}^{-1}\circ \bff}(h; \bk)$, and $\Phi_{\cI}(h;\bk)=\Phi_{\wh{\bff}_{\wh{\theta}}^{-1}\circ \bff}(h; \bk)$ for any $(\bg, m)$ in the training dataset, and $\Delta \fm(\bk)$ small, $\Psi_{\cI}(h;\bk)\approx \Psi_{\wh{\bff}_{\wh{\theta}}^{-1}\circ \bff}(h; \bk)$, and $\Phi_{\cI}(h;\bk)\approx \Phi_{\wh{\bff}_{\wh{\theta}}^{-1}\circ \bff}(h; \bk)$ for any $(\bg, m)$ in the testing dataset.

\begin{figure}[!htb]
	\centering
	\begin{minipage}{0.32\textwidth}
	\includegraphics[width=\linewidth]{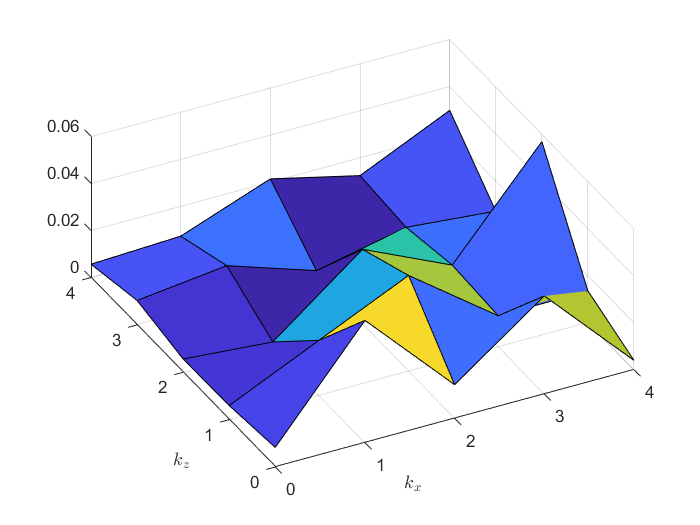}
	\subcaption[]{$\Delta\fm(\bk)$}
	\end{minipage}
	\hfill
	\begin{minipage}{0.32\textwidth}
	\includegraphics[width=\linewidth]{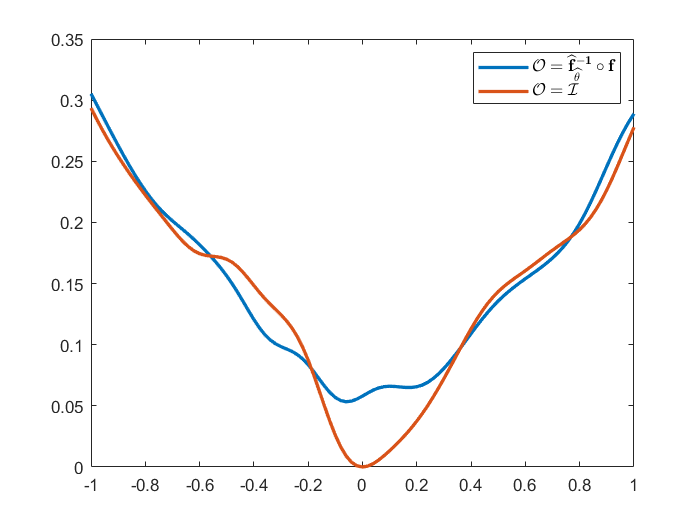}
	\subcaption[]{\scriptsize{$\Psi_{\cK}(h; (2,3))$}}
	\end{minipage}
	\hfill
	\begin{minipage}{0.32\textwidth}
	\includegraphics[width=\linewidth]{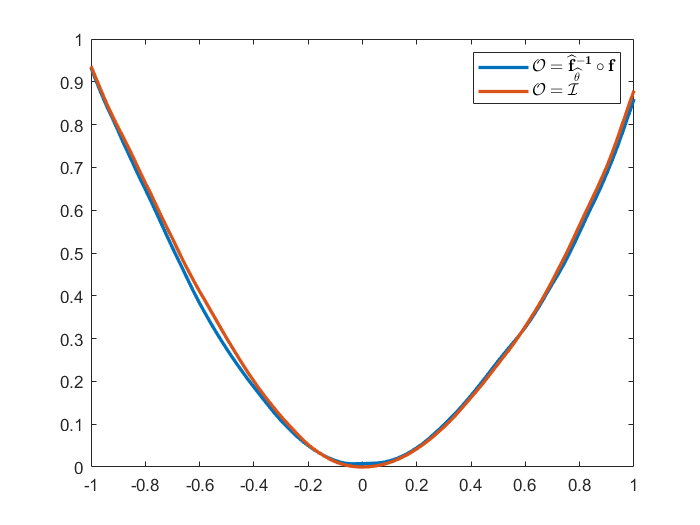}
	\subcaption[]{\scriptsize{$\Phi_{\cK}(h; (2,3))$}}
	\end{minipage}
	\\
	\begin{minipage}{0.32\textwidth}
	    \includegraphics[width=\linewidth]{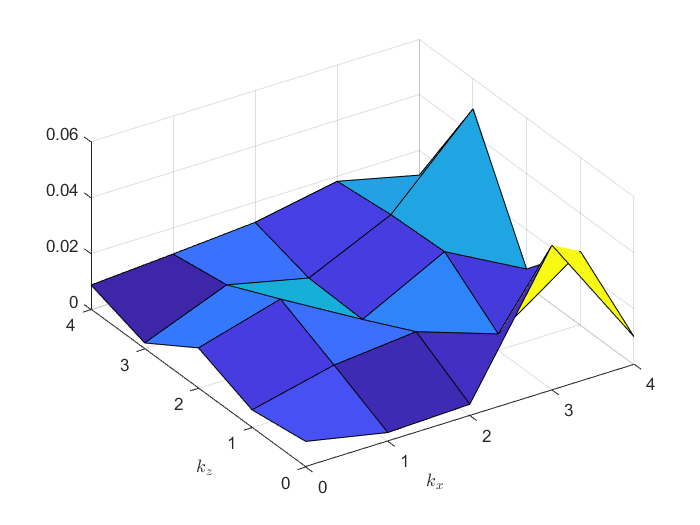}
    	\subcaption[]{$\Delta\fm(\bk)$}
    \end{minipage}
	\hfill
	\begin{minipage}{0.32\textwidth}
    	\includegraphics[width=\linewidth]{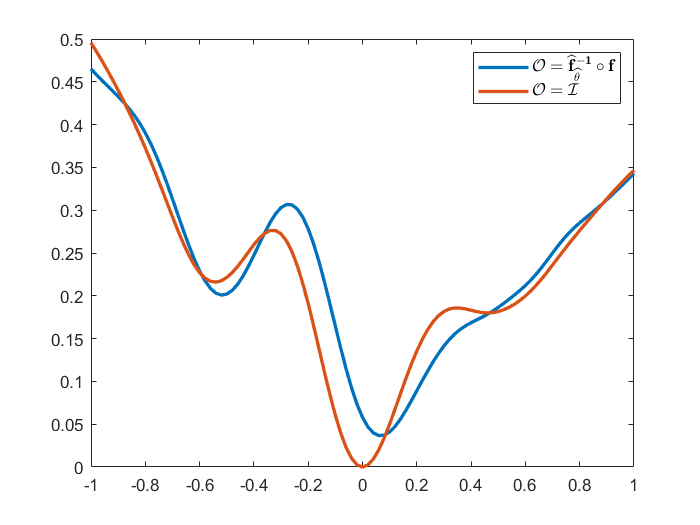}
    	\subcaption[]{\scriptsize{$\Psi_{\cK}(h; (1,1))$}}
    \end{minipage}
	\hfill
	\begin{minipage}{0.32\textwidth}
    	\includegraphics[width=\linewidth]{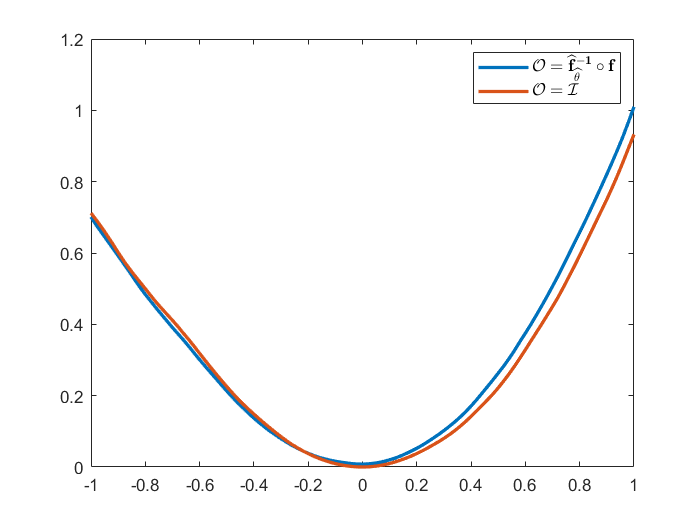}
    	\subcaption[]{\scriptsize{$\Phi_{\cK}(h; (1,1))$}}
	\end{minipage}
	\\
	\begin{minipage}{0.32\textwidth}
	    \includegraphics[width=\linewidth]{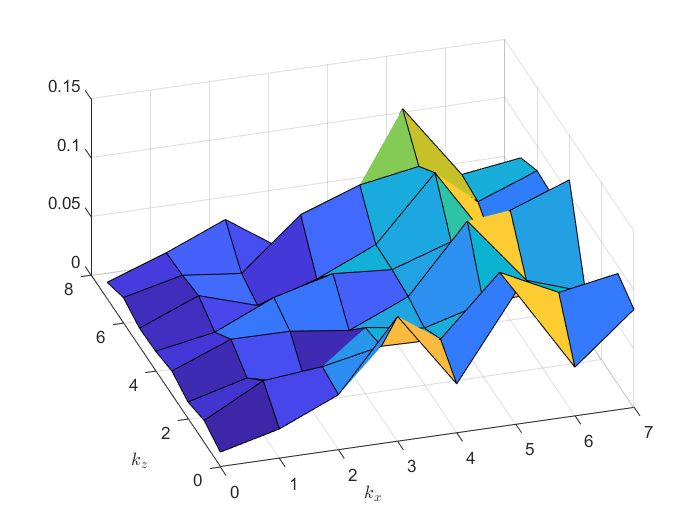}
    	\subcaption[]{$\Delta\fm(\bk)$}
    \end{minipage}
	\hfill
	\begin{minipage}{0.32\textwidth}
    	\includegraphics[width=\linewidth]{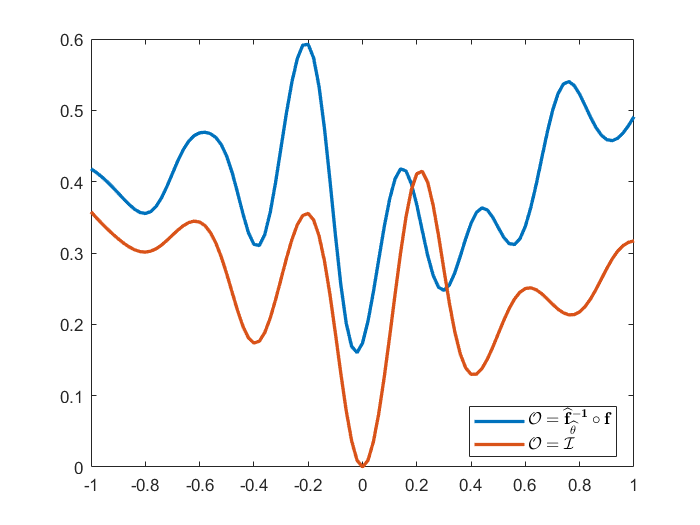}
    	\subcaption[]{\scriptsize{$\Psi_{\cK}(h; (2,3))$}}
    \end{minipage}
	\hfill
	\begin{minipage}{0.32\textwidth}
    	\includegraphics[width=\linewidth]{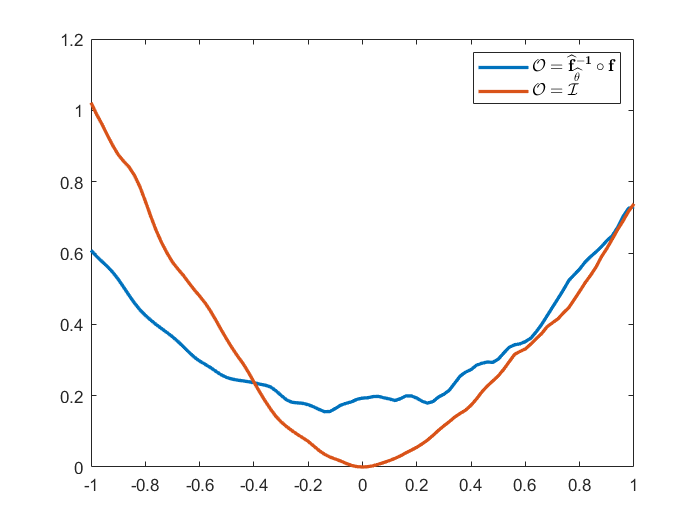}
    	\subcaption[]{\scriptsize{$\Phi_{\cK}(h; (2,3))$}}
	\end{minipage}\\
	\begin{minipage}{0.32\textwidth}
	    \includegraphics[width=\linewidth]{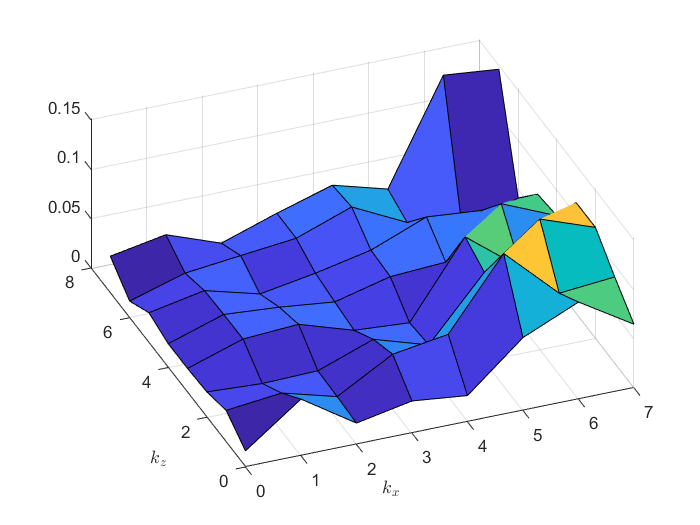}
    	\subcaption[]{$\Delta\fm(\bk)$}
    \end{minipage}
	\hfill
	\begin{minipage}{0.32\textwidth}
    	\includegraphics[width=\linewidth]{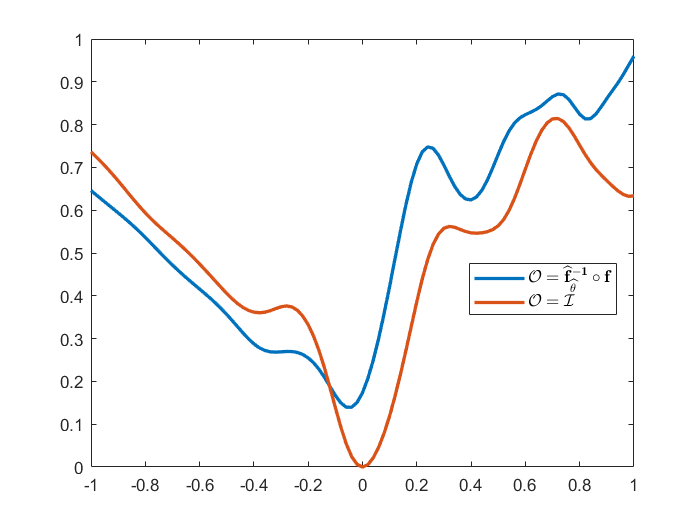}
    	\subcaption[]{\scriptsize{$\Psi_{\cK}(h; (1,1))$}}
    \end{minipage}
	\hfill
	\begin{minipage}{0.32\textwidth}
    	\includegraphics[width=\linewidth]{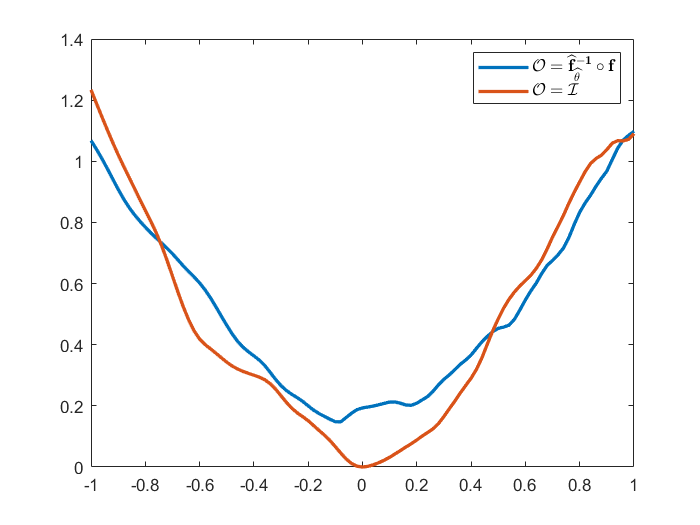}
    	\subcaption[]{\scriptsize{$\Phi_{\cK}(h; (1,1))$}}
	\end{minipage}
	\caption{Plots of $\Delta\fm(\bk)$ (first column), $\Psi_\cK(h;\bk)$ (second column), and $\Phi_\cK(h; \bk)$ (third column) for four different $(\bg, m)$ pairs in the testing dataset. The velocity model for rows 1-2 has $M=4$ and that for the plots in rows 3-4 has $M=7$.}
	\label{FIG:Psi Landscape}
\end{figure}
In Figure~\ref{FIG:Psi Landscape}, we show plots of $\Delta \fm(\bk)$ (left column), $\Psi_{\cI}(h;\bk)$ (red line) and $\Psi_{\wh\bff_{\wh{\theta}}^{-1}\circ \bff}(\bk)$ (blue line) (middle column), and 
$\Phi_{\cI}(h;\bk)$ (red line) and $\Phi_{\wh\bff_{\wh{\theta}}^{-1}\circ \bff}(\bk)$ (blue line) (right column), for four randomly selected $(\bg, m)$ pairs in the testing dataset. Shown are results for $\bk=(2,3)$ and $\bk=(1,1)$. Very similar behavior is observed along other coordinates $\varphi_\bk$.

The plots in Figure~\ref{FIG:Psi Landscape} provide a quantitative description of the accuracy of the trained network. They clearly indicates that the trained $\wh\bff_{\wh{\theta}}^{-1}$ is indeed a good approximation to $\bff^{-1}$. Moreover, a comparison of the second column and the third column gives the impression that along with the coordinates we plotted, the new objective functional $\Phi$ in~\eqref{EQ:Obj} has a much better landscape than the classical $\Psi$ in~\eqref{EQ:Obj Classical}. This is what we observed in other coordinates that are not shown here as well. Therefore, the trained neural network $\wh\bff_{\wh{\theta}}^{-1}$ can work as a nonlinear preconditioner to improve the convexity of the optimization landscape. Moreover, the plots provided a good indication that the trained network is fairly generalizable in the following sense. 
The Fourier coefficients (including $\fm_{(2,3)}$ and $\fm_{(1,1)}$ shown in the plots) in the training dataset are all randomly drawn in the interval $[-0.5, 0.5]$. Here in the plots, we consider the coefficient values in the range $[-1, 1]$. The agreement of the red and blue lines outside of the training value range $[-0.5, 0.5]$, that is, in the range $[-1, -0.5)\cup(0.5, 1]$, suggests that the trained neural network can be used in a region of coefficient values that is far larger than its training domain.
 
\subsubsection{Random Fourier velocity model: case of decaying coefficients}
While training the neural network for an approximate inverse $\wh\bff_{\wh\theta}^{-1}$ on a large space of velocity field is extremely useful for generalization purposes, it also poses great challenges when the number of Fourier modes included in the model gets very large. Not only will we need an exponentially larger training dataset, but also the training process takes exponentially growing computational power. This is what we observed in our numerical experiments. In this section, we show some training-validation results for the velocity model~\eqref{EQ:Velocity Model 1} with decaying Fourier coefficients following the pattern we imposed in~\eqref{EQ:decay_rule}. We present results from two different cases: the slow decay case with $\alpha=1/2$ and the fast decay case with $\alpha=1$. 
\begin{figure}[!htb]
	\centering
	    \includegraphics[width=0.8\linewidth]{8fouriernotdecay}
	    \includegraphics[width=0.8\linewidth]{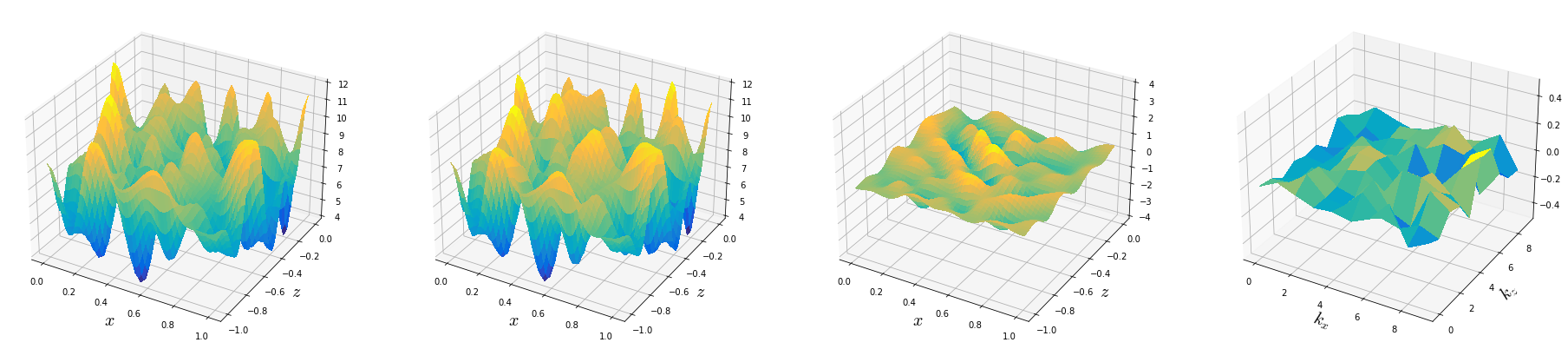}
		\includegraphics[width=0.8\linewidth]{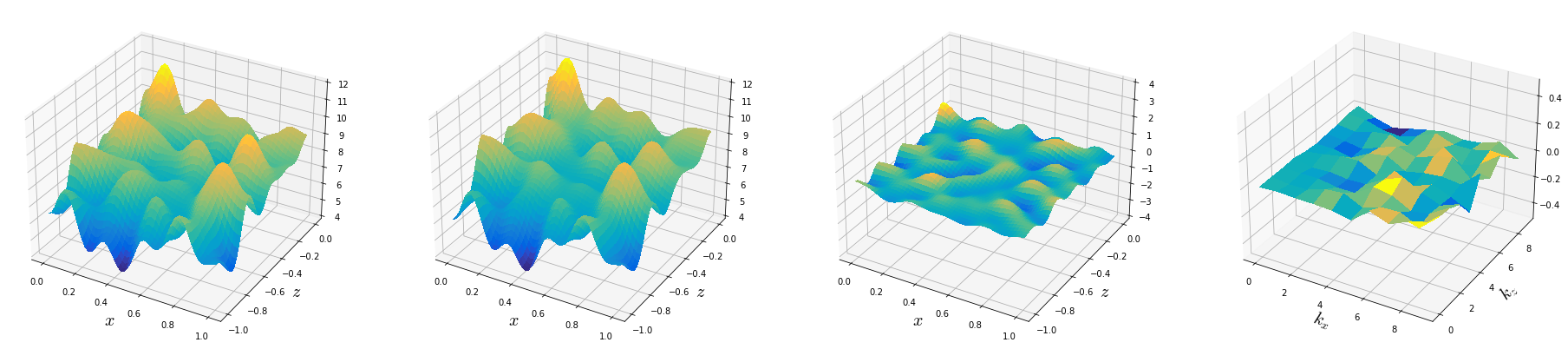} 
		\includegraphics[width=0.8\linewidth]{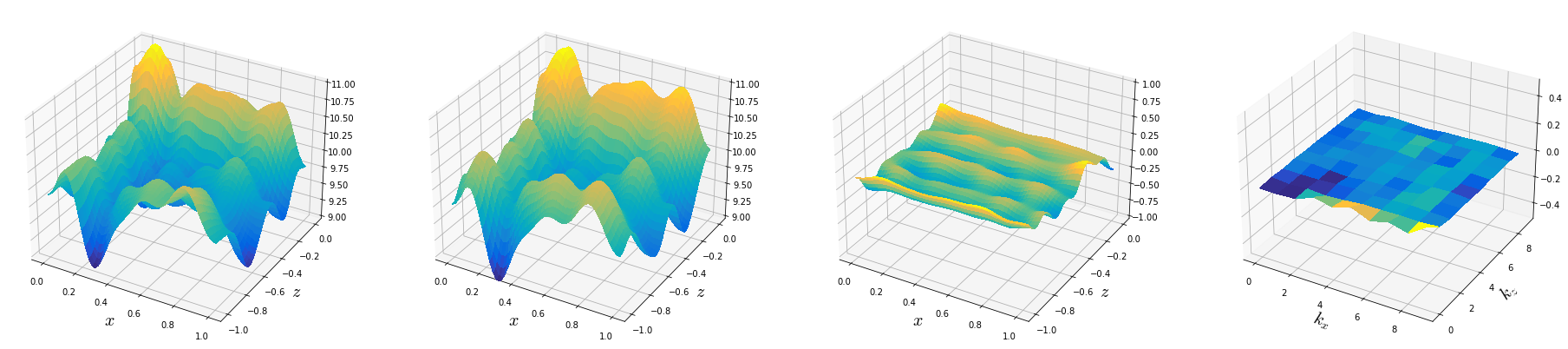}
        \includegraphics[width=0.8\linewidth]{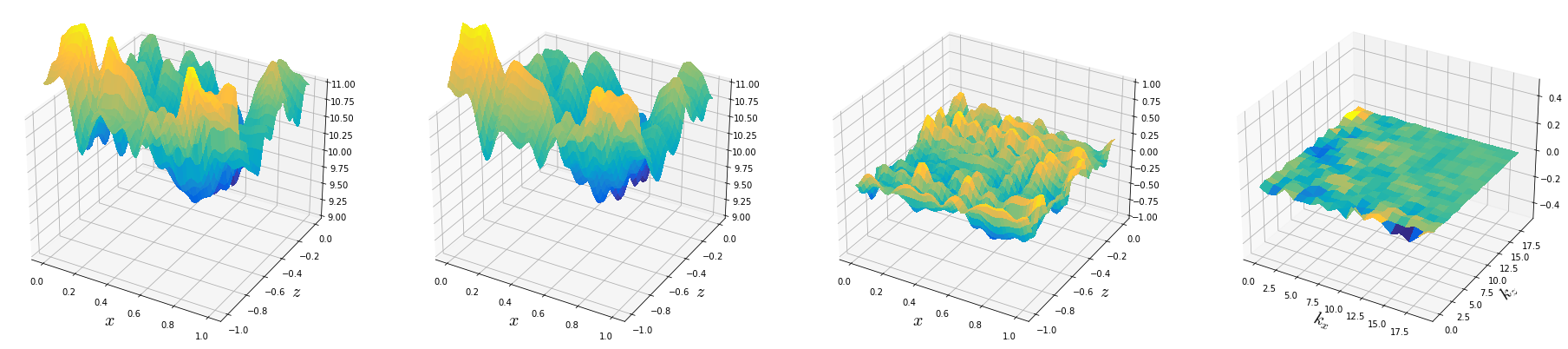}
		\caption{Validation results on four typical velocity fields in the testing dataset. Row 1 is the results for 8 Fourier velocity model with $\alpha=0$, Row 2 is the results for 10 Fourier velocity model with $\alpha=0$, row 3 is the results for 10 Fourier velocity model with $\alpha=1/2$, row 4 is the results for 10 Fourier velocity model with $\alpha=1$ while row 5 is are results for 20 Fourier velocity model with $\alpha=1$. From left to right are: the true velocity field, neural network prediction, the error of the prediction, and the error of the prediction in the Fourier domain.}\label{FIG:Decay Results}
\end{figure}

In Figure~\ref{FIG:Decay Results}, we show typical validation results on five randomly selected velocity profiles in the testing dataset. The top two rows are the results of the training of the velocity model with $\alpha=0$, the third row is the case of $\alpha=1/2$ while the bottom two rows are for the case of $\alpha=1$. In both cases, the training is successful, as can be seen from the relatively small errors in the predictions. Plots of the functionals $\Psi_\cI$ and $\Psi_{\wh\bff_{\wh{\theta}}^{-1}\circ \bff}$ show similar patterns as those in Figure~\ref{FIG:Psi Landscape}. We omit those to space. Moreover, prediction errors in the Fourier domain display very similar behavior as observed in the previous subsection: the error is higher for high-frequency components and lower for low-frequency components.
 
\begin{figure}[!htb]
	\centering
	\includegraphics[width=0.95\linewidth]{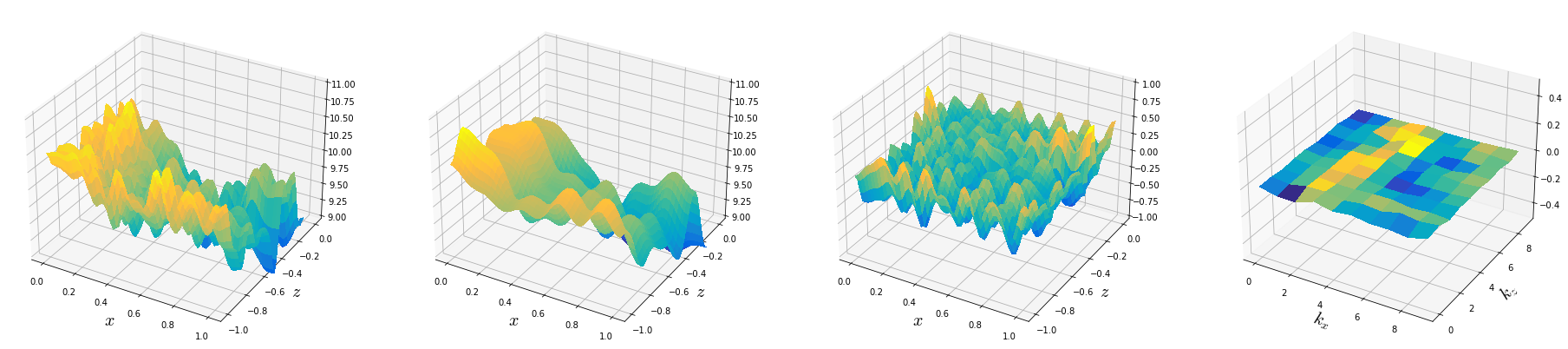}
	\caption{Instance of validation of learning results on a different class of velocity models for the case of $\alpha=1$. Shown from left to right are: the true velocity field, the neural network prediction, the error in the prediction, and the error in the Fourier domain. }
	\label{FIG:Decay Case Generalization}
\end{figure}
To study the generalization capability of the learned network, we validate the learning with on dataset generated from a different velocity model, that is, to consider the case where training and testing data samples are from different classes. In Figure~\ref{FIG:Decay Case Generalization}, we train a neural network to recover the first $10\times 10$ Fourier coefficient of the velocity field and validate the trained neural network on a dataset generated from velocity models that contain $20\times 20$ random Fourier modes. The decay rate in this particular case is $\alpha=1$, but similar results are observed for $\alpha=1/2$ as well. The validation results demonstrate that the trained network is reasonably generalizable in the setting that we considered.

\subsubsection{Mesh-based velocity model}
 
\begin{figure}
	\centering
	\includegraphics[width=0.7\linewidth]{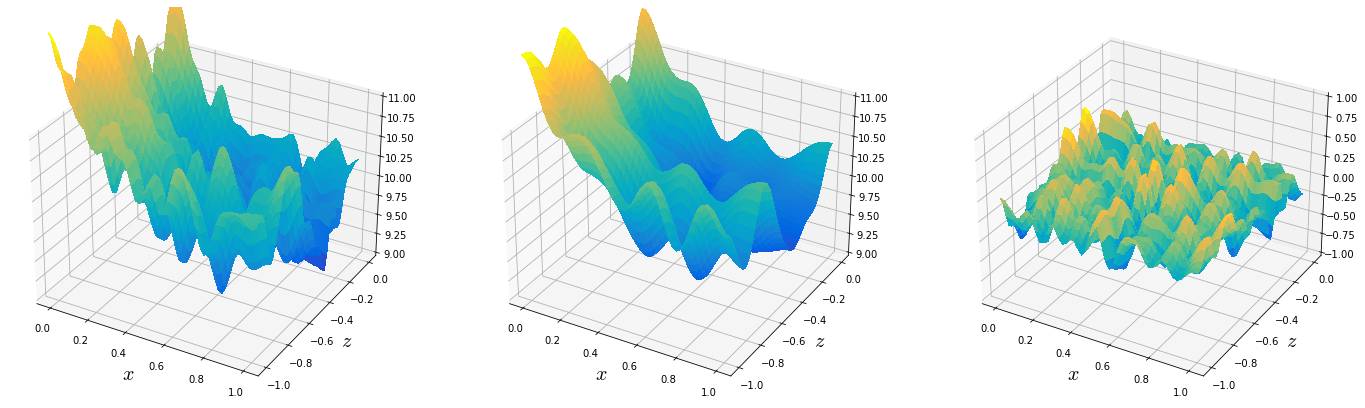}
    \caption{Out-of-domain validation of a training result with mesh-based velocity representation. Shown from left to right are: the true velocity field, the network prediction, and the error in the prediction. }
    \label{FIG:Non-Fourier Validation}
\end{figure}
In the last training-validation numerical experiment, we demonstrate that the phenomena observed in the previous subsections are not particularly due to the Fourier parameterization of the velocity field in~\eqref{EQ:Velocity Model 1} that we used. Indeed, the results are more related to our method of training. Here we perform the same type of training on a neural network whose output is the velocity field represented on a $51\times 51$ uniform mesh on the domain $\Omega$. The output space is therefore much larger compared to the training in the case of the random Fourier velocity model. However, the training result, after projecting into the Fourier space, has almost identical properties to what we observed in the random Fourier model. In Figure~\ref{FIG:Non-Fourier Validation}, we show the out-of-domain validation result for the training. The velocity fields that generated the training dataset have $10\times 10$ Fourier modes while the velocity fields in the validation dataset have $20\times 20$ Fourier modes (but represented on a $51\times 51$ uniform mesh), both generated with $\alpha=1$. The relatively small validation errors again indicate that the training is fairly successful and reasonably generalizable. The computational cost, in this case, is much larger than those in the previous subsections since the neural network has a larger size due to the increased size of the network output.

\subsection{Learning-assisted main feature} 

In this section, we present inversion results for some simulated datasets to verify the efficiency and stability of the proposed coupling method. All simulations on the inversion stage are conducted on a quad-core Intel Core i$7$ with $16$ GB RAM.
 
\subsubsection{Convexity of the new objective function} 

Lemma~\ref{fact:landscape} indicates that if we have relatively accurate training, the new objective function for our coupled reconstruction scheme behaves similarly to the functional $ \|m-m_0\|_{L^2(\Omega)}^2$, $m_0$ being the true solution. Figure~\ref{FIG:Psi Landscape} provided some evidence of this in the training of the random Fourier model. In the one-coefficient case, plots in Figure~\ref{FIG:Psi Landscape} clearly show that the new objective function is almost convex. We now present some numerical evidence in the case of the Gaussian mixture velocity model. In particular, we are interested in seeking convexity with respect to the location of a Gaussian perturbation. More precisely, the velocity field $m({\bf x})$ is set to be a single Gaussian model with $M= 1$ in (\ref{EQ:Velocity Model 2}), that is,
\[
m({\bf x}) = m_0 + c_1e^{-\frac{1}{2} ({\bf x} - {\bf x}_0^1)^\fT \Sigma_1^{-1}({\bf x} - {\bf x}_0^1)},\quad {\bf x}_0^1 = (x_0^1, z_0^1),\quad \Sigma_1 = \begin{pmatrix} \sigma_1^2 & 0\\ 0 & \sigma_1^2 \end{pmatrix}.
\]
where the background velocity $m_0$, the amplitude $c_1$, and the variance $\sigma_1$ are fixed to be $(m_0, c_1, \sigma_1) = (10, 5, 0.1)$. We then present the objective functions $\Psi(m)$ and $\Phi(m)$ ($\gamma=0$) with respect to the location $(x_0^1, z_0^1)$ in Figure \ref{FIG:Landscape Gaussian}. The setting of the offline training stage for generating $\wh{\bf f}_{\wh{\theta}}^{-1}$ is the same as those in Section \ref{SUBSEC:Data Generation}.

Figure~\ref{FIG:Landscape Gaussian} presents the landscapes of objective functions $\Psi(m)$ and $\Phi(m)$ $(\gamma=0)$ with fixed $(m_0, c_1, \sigma_1)$. In particular, we set $(x_0^1, z_0^1) = (0.5, -0.5)$ as the ground true velocity model, which generates the wave signal ${\bf g}$. From Figure~\ref{FIG:Landscape Gaussian}, we observe that (i) the classical objective function $\Psi(m)$ is not a convex function, and its landscape shows that the optimization can be easily trapped into a local minimum if the initial model is not carefully chosen; (ii) the new objective function $\Phi(m)$ ($\gamma=0$) for the proposed coupling method becomes more convex which is consistent with Lemma~\ref{fact:landscape}. In addition, we note that when the initial model is close enough to the exact model (located at the convex region of the misfit function), the global minimum is guaranteed, and one can also expect a fast convergence. In fact, a good initial model under the setting of the proposed coupling scheme can be easily obtained by adding a small perturbation to $\wh{\bf f}_{\wh{\theta}}^{-1}({\bf g})$ as indicated by Neumann series (\ref{EQ:Neumann}).

\begin{figure}[!htb]
	\centering
	\begin{minipage}{0.49\textwidth}
		\centering
		\includegraphics[width=0.48\textwidth]{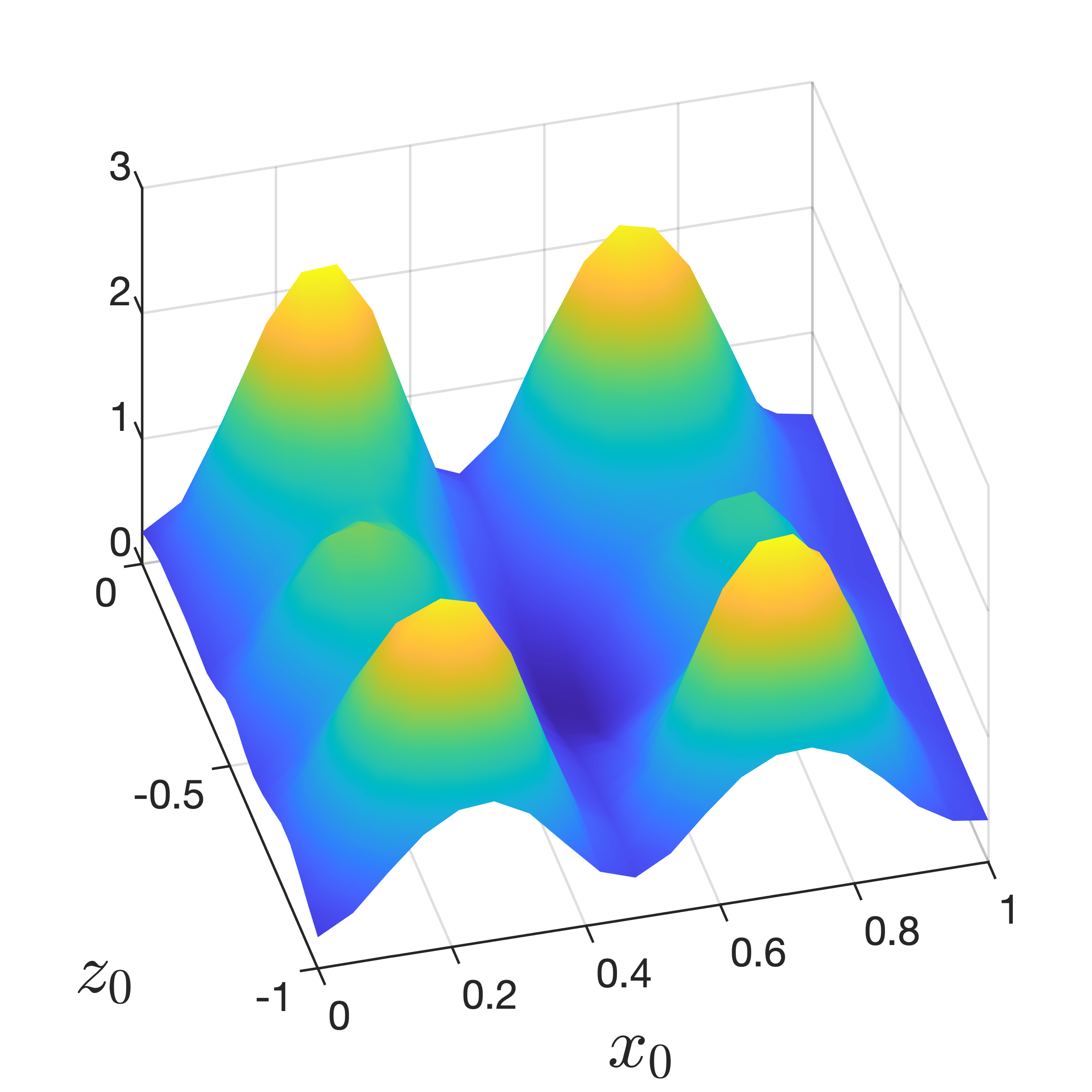}
		\includegraphics[width=0.48\textwidth]{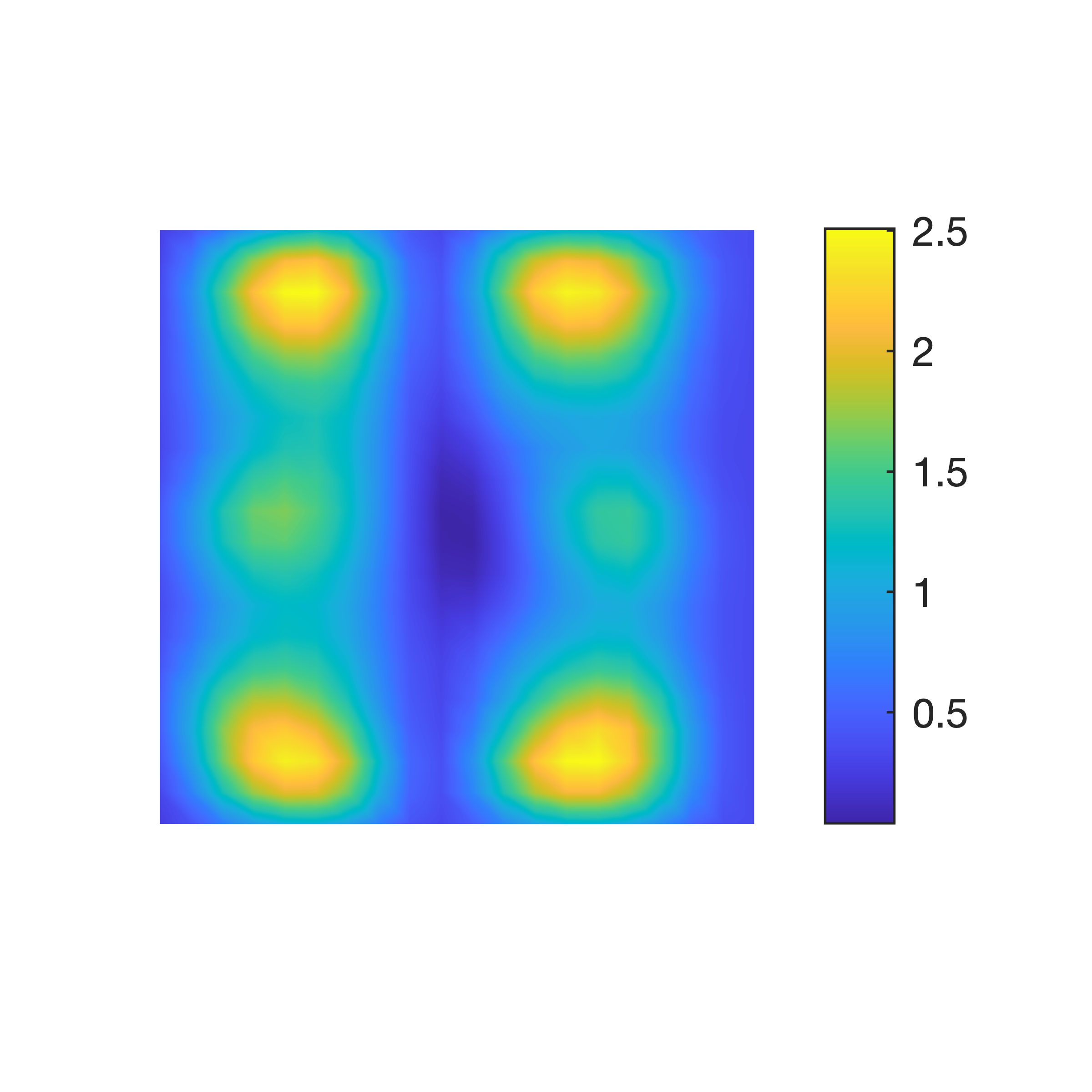} \subcaption[fig:a]{\footnotesize{Landscape of $\Psi(m)$}}
		\label{fig:training}
	\end{minipage}
	\hfill
	\begin{minipage}{0.49\textwidth}
		\centering
		\includegraphics[width=0.48\linewidth]{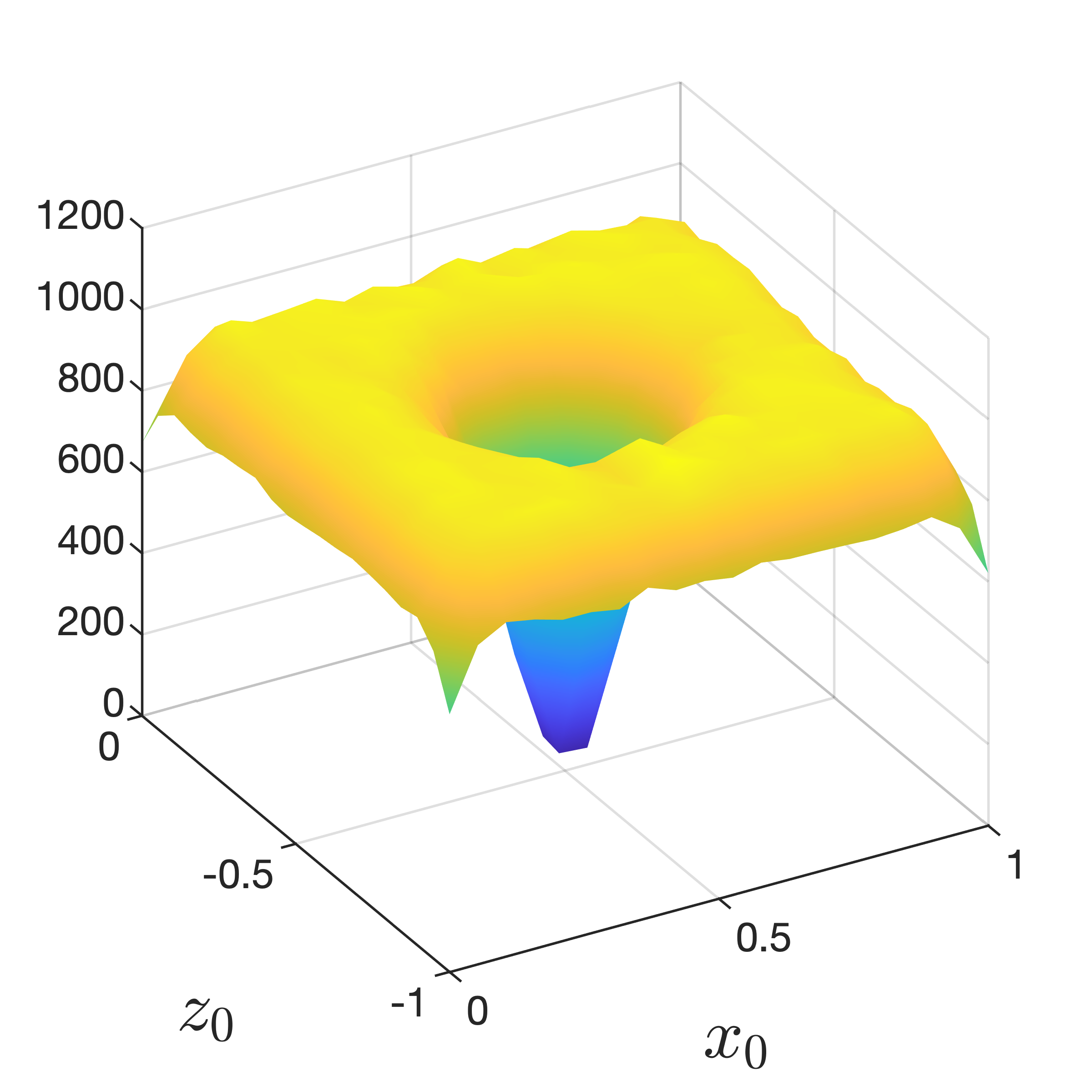}
		\includegraphics[width=0.48\linewidth]{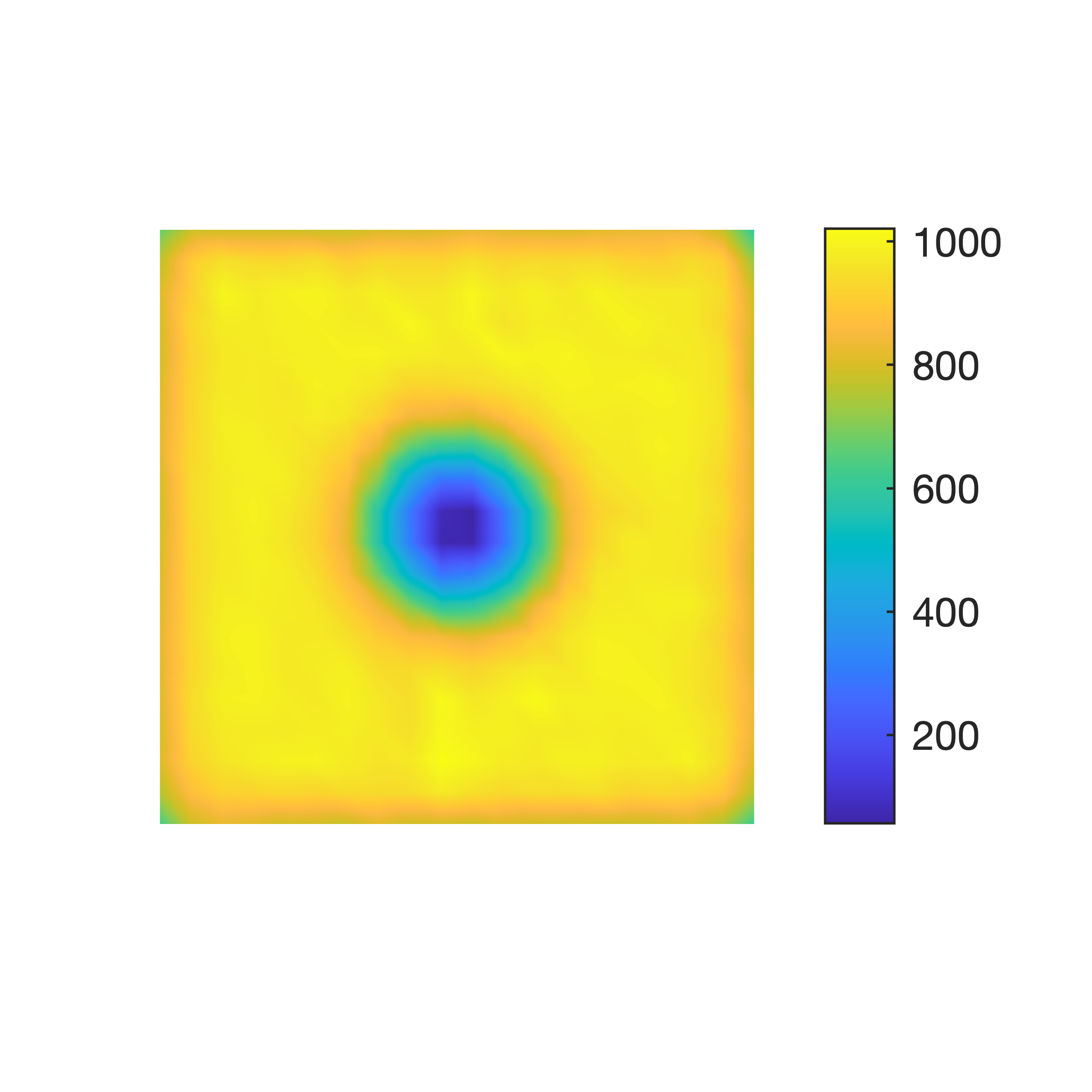}
		\subcaption[fig:b]{\footnotesize{Landscape of $\Phi(m)$}}
		\label{fig:prediction}
	\end{minipage}
	\caption{\small{The landscape of the classical (left) and new (right) objective functions for the location of a Gaussian perturbation of the velocity field. }}
	\label{FIG:Landscape Gaussian}
\end{figure}

\subsubsection{Inversion for the velocity model \texorpdfstring{\eqref{EQ:Velocity Model 2}}{} with \texorpdfstring{$M = 2$}{}}
\label{Sec:two_gaussian}

The first inversion example was performed to recover the following mixed Gaussian velocity model (\ref{EQ:Velocity Model 2}) with $M = 2$ and $m_0 = 10$,
\begin{equation}\label{model1_M2}
m({\bf x}) = 10 + \sum_{k=1}^2c_ke^{-\frac{1}{2} ({\bf x} - {\bf x}_0^k)^T \Sigma_k^{-1}({\bf x} - {\bf x}_0^k)},\quad {\bf x}_0^k = (x_0^k, z_0^k),\quad \Sigma_k = \begin{pmatrix}
\sigma_k^2 & 0\\
0 & \sigma_k^2
\end{pmatrix}.
\end{equation}
 We use the same offline training settings as those for the mixed Gaussian wave signal generation in Section \ref{SUBSEC:Data Generation} to construct $\wh{\bf f}_{\wh{\theta}}^{-1}$ for the online inversion stage. However, to generate versatile wave signals at the bottom surface to recover the features $\{c_1,c_2,\sigma_1,\sigma_2,x_0^1,x_0^2,z_0^1,z_0^2\}$, we enforce three different top sources $h_i(x), i = 1,2,3$ with
\[
h_1(x) = e^{-\frac{(x-0.8)^2}{0.01}} -e^{-\frac{(x-0.2)^2}{0.01}}, \quad
h_2(x) = e^{-\frac{(x-0.4)^2}{0.01}} -e^{-\frac{(x-0.7)^2}{0.01}}, 
\]
and
\[
h_3(x) = e^{-\frac{(x-0.6)^2}{0.01}} -e^{-\frac{(x-0.3)^2}{0.01}},
\]
rather than one single external top source in Section \ref{SUBSEC:Data Generation}.
 
For the inversion stage, we implement a $J$-term truncated Neumann series approximation~\eqref{EQ:Neumann J} to obtain the reconstructed velocity image. Note that $J = 1$ corresponds to the reconstructed velocity image from the offline training stage. We also add the Gaussian noise with zero mean and $10\%$ standard deviation to test the stability of the proposed coupling scheme. Figure \ref{two_gaussian} presents the reconstructed images. Precisely, the first three columns show the surface plots of the exact velocity field, the neural network prediction velocity field from the offline training stage, and the reconstructed velocity field with $J = 20$ from the online inversion stage, while the last column displays the difference between the exact velocity field (first column) and the reconstructed velocity field (third column). From the top row to the bottom row of Figure \ref{two_gaussian}, we present the results from the noise-free wave signal, the wave signal with $10\%$ multiplication Gaussian noise, and the wave signal with $10\%$ additive Gaussian noise, respectively. We see that the online inversion stage improves the accuracy of the reconstructions for all cases. Table \ref{model1_M2_errors} lists the $L^2/L^\infty$ errors on the velocity field for the entire computational domain, as well as the CPU time for various implementations with different values of $J$. There, we note that for the wave signals without noise and with $10\%$ multiplication Gaussian noise, both $L^2$ and $L^\infty$ reconstruction errors dropped by a factor $\sim 10^4$ within $30$ seconds; for the wave signal with $10\%$ additive Gaussian noise, it seems that there is no improvement to add more Neumann terms in (\ref{EQ:Neumann J}) once the $L^2$ error reduces to $5.89\times 10^{-3}$ and $L^\infty$ error reduces to $4.28\times 10^{-2}$; for this type of the situation, we can use the reconstruction from adding Neumann terms as an initial guess for a gradient-based optimization scheme to further improve the resolution of the reconstruction, see Section~\ref{SEC:algorithms}. 

\begin{figure}[!htb]
	\centering
		\includegraphics[width=0.24\textwidth,trim=0cm 1.5cm 0cm 0.5cm,clip]{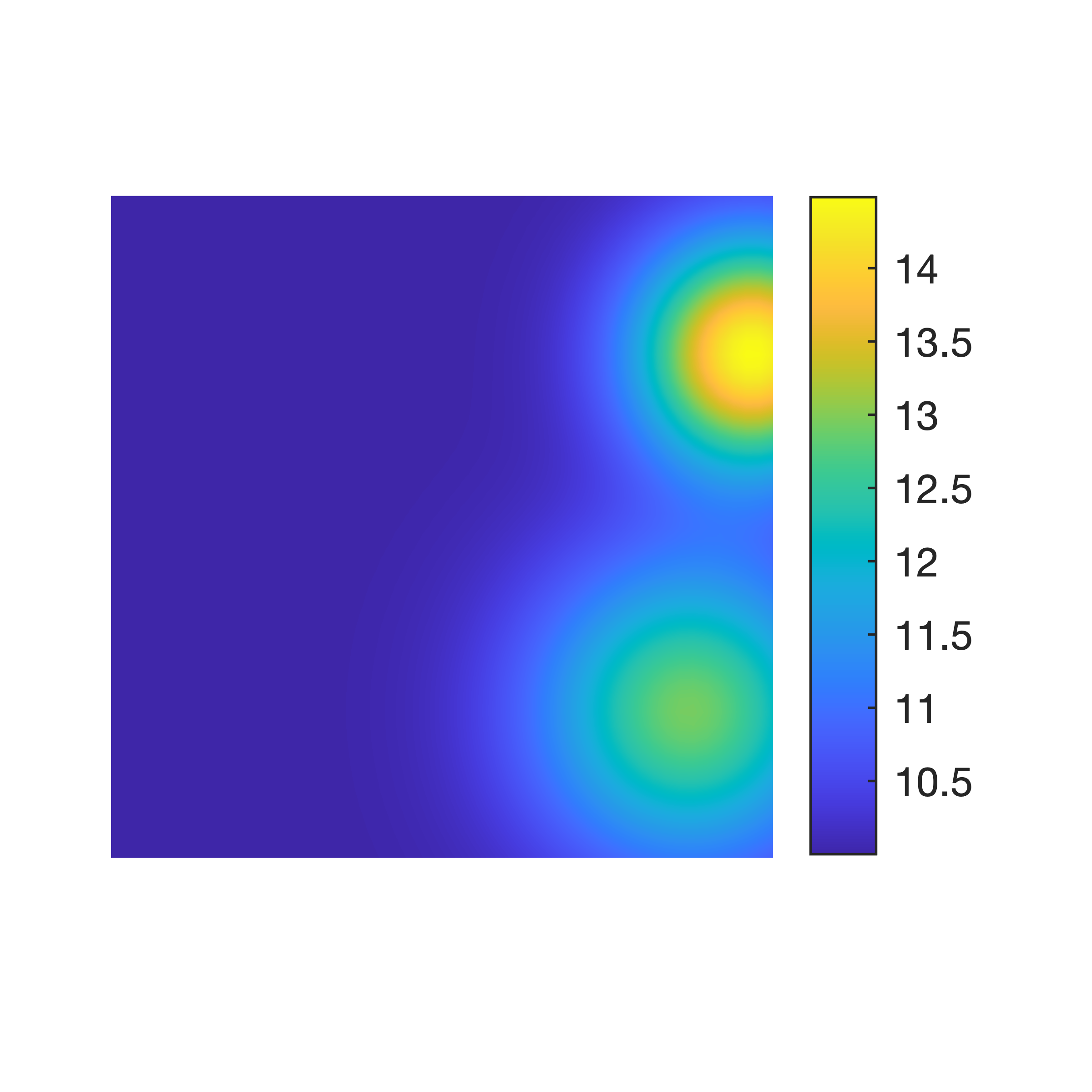}
		\includegraphics[width=0.24\textwidth,trim=0cm 1.5cm 0cm 0.5cm,clip]{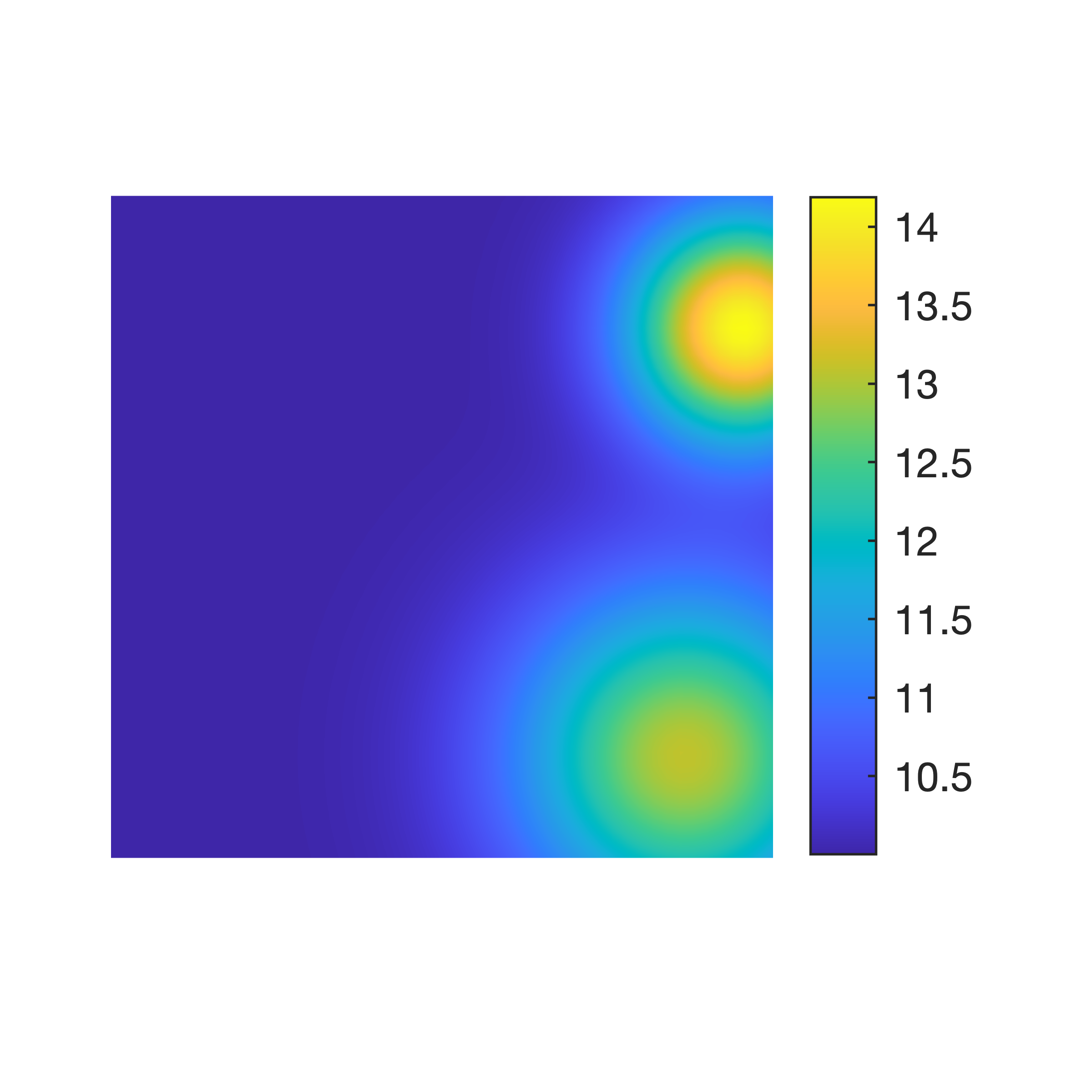} 
		\includegraphics[width=0.24\textwidth,trim=0cm 1.5cm 0cm 0.5cm,clip]{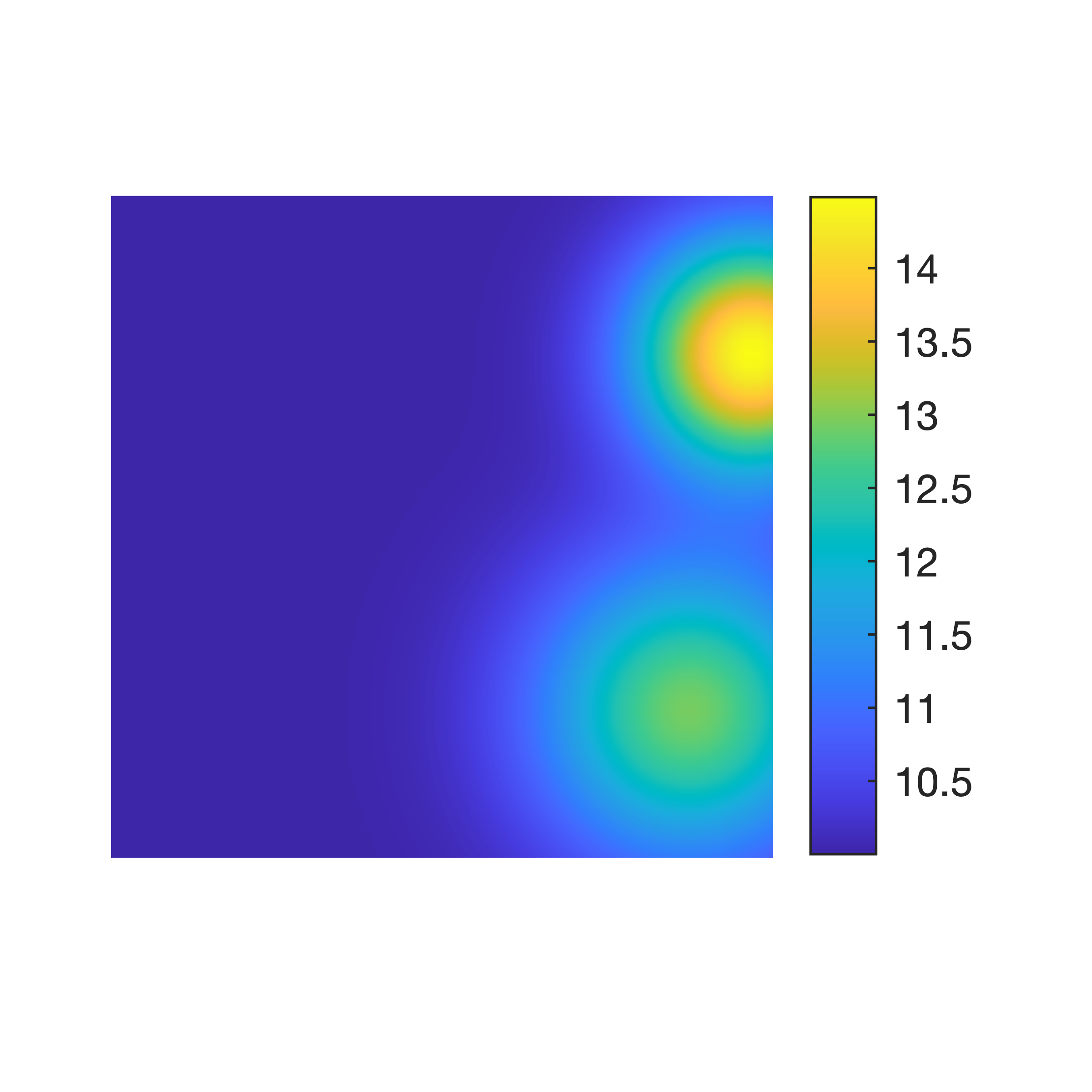}
		\includegraphics[width=0.24\textwidth,trim=0cm 1.5cm 0cm 0.5cm,clip]{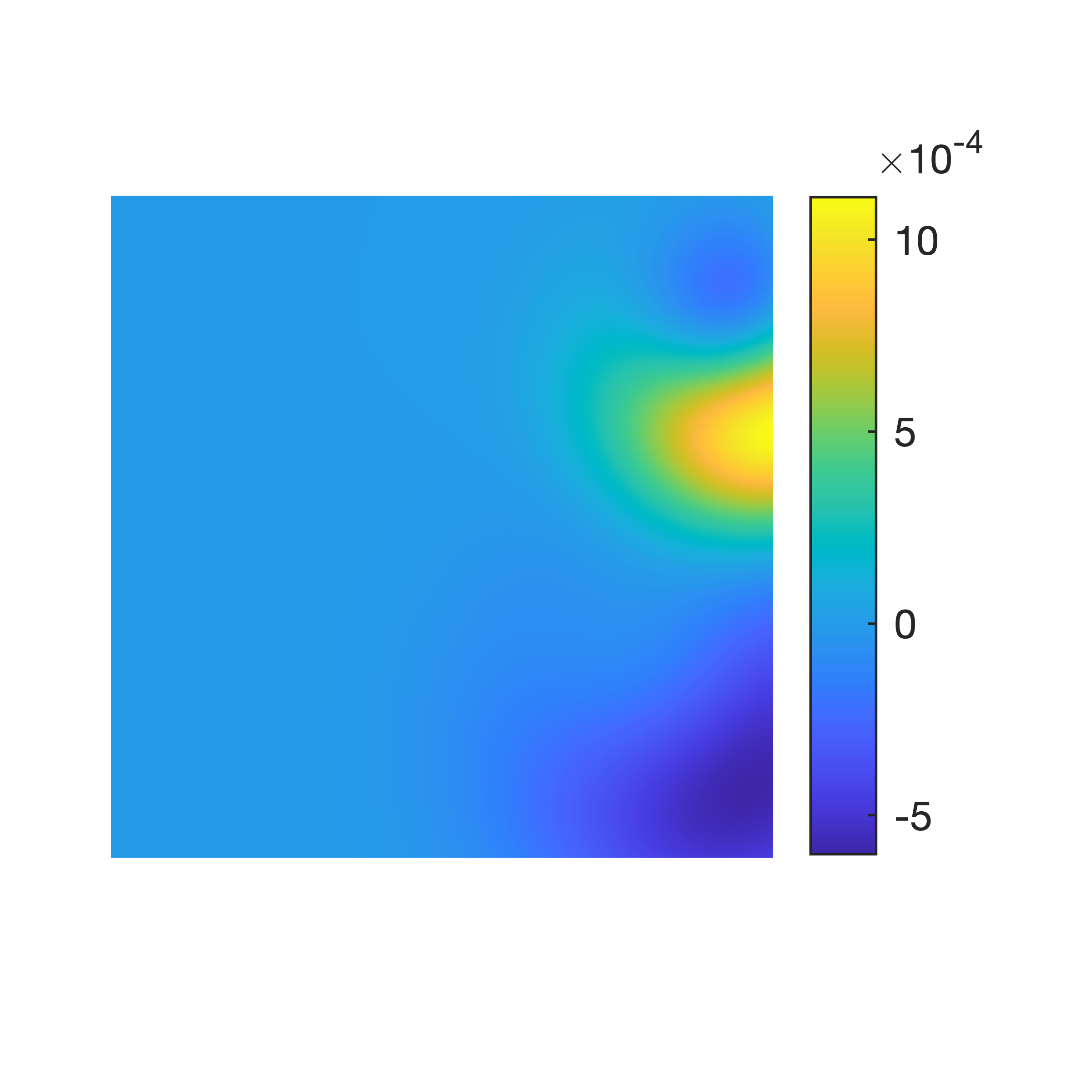} \\
		\includegraphics[width=0.24\textwidth,trim=0cm 1.5cm 0cm 0.5cm,clip]{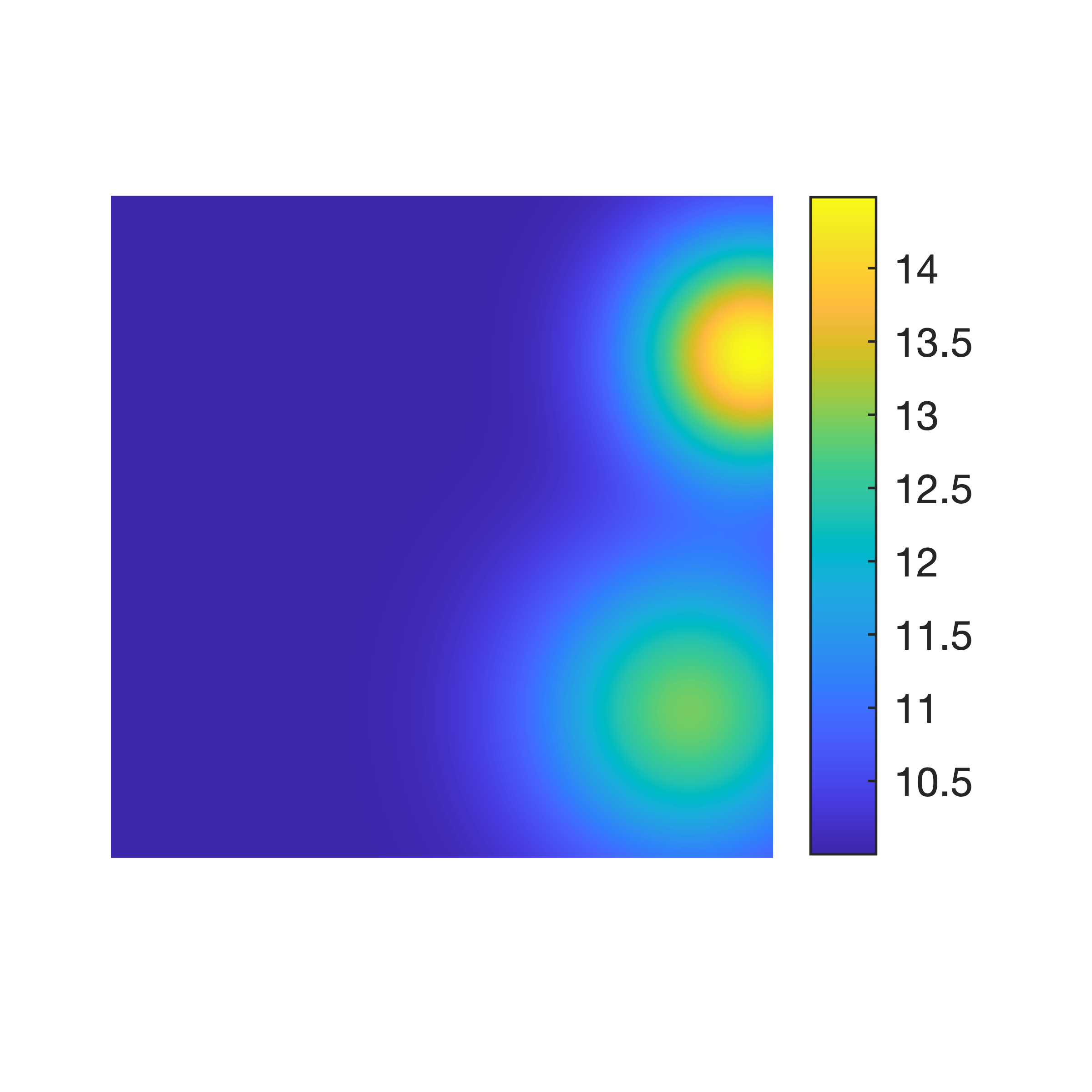}
		\includegraphics[width=0.24\textwidth,trim=0cm 1.5cm 0cm 0.5cm,clip]{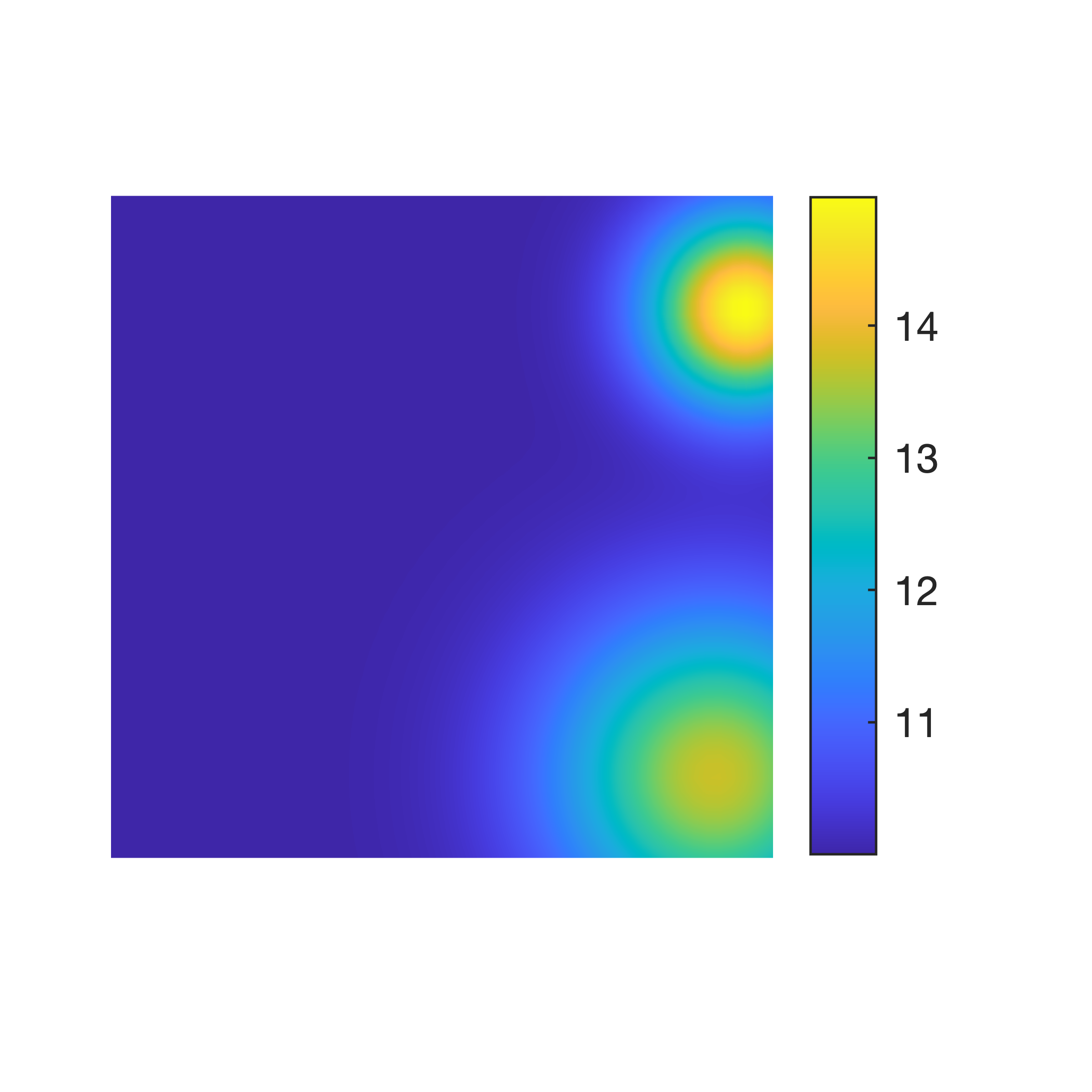} 
		\includegraphics[width=0.24\textwidth,trim=0cm 1.5cm 0cm 0.5cm,clip]{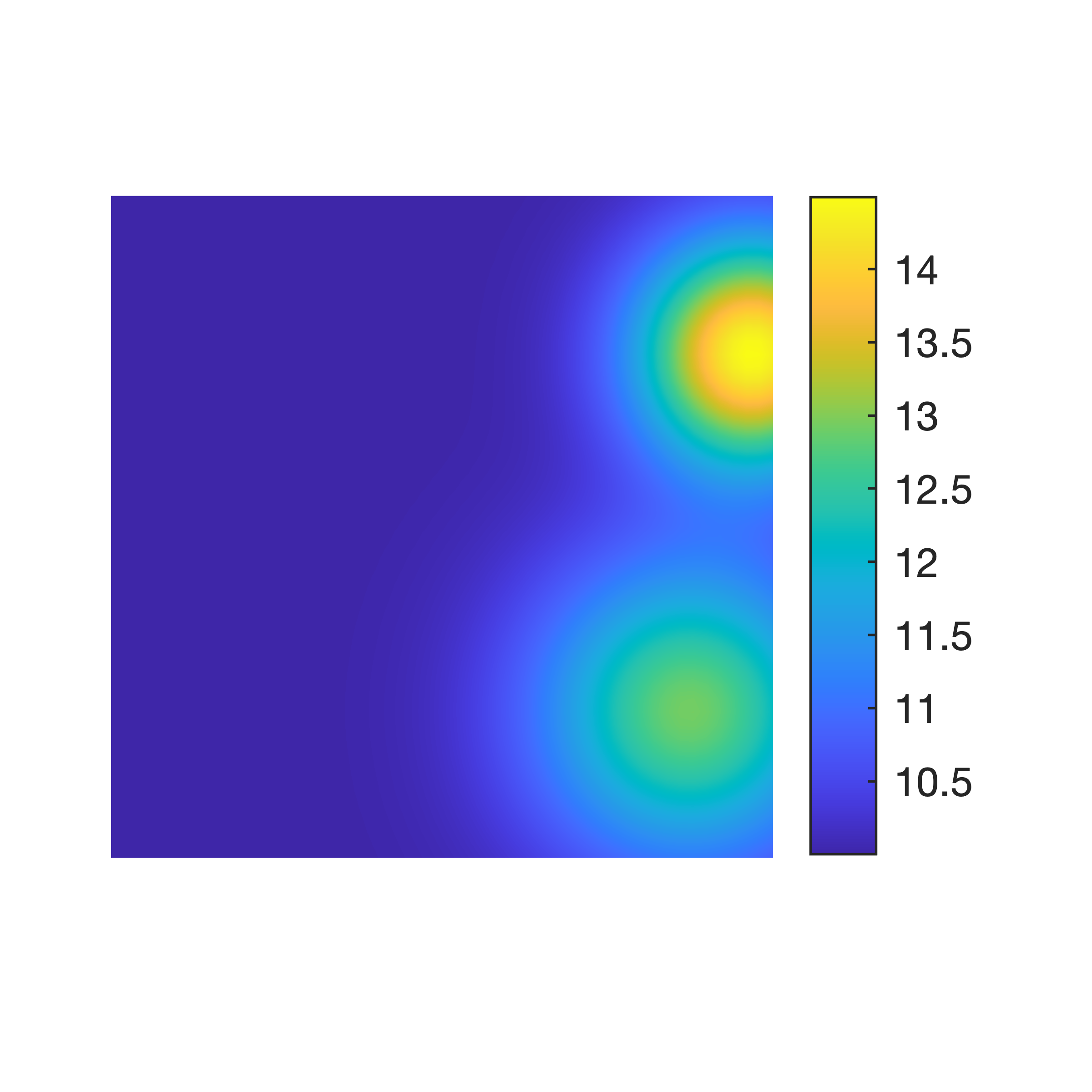}
		\includegraphics[width=0.24\textwidth,trim=0cm 1.5cm 0cm 0.5cm,clip]{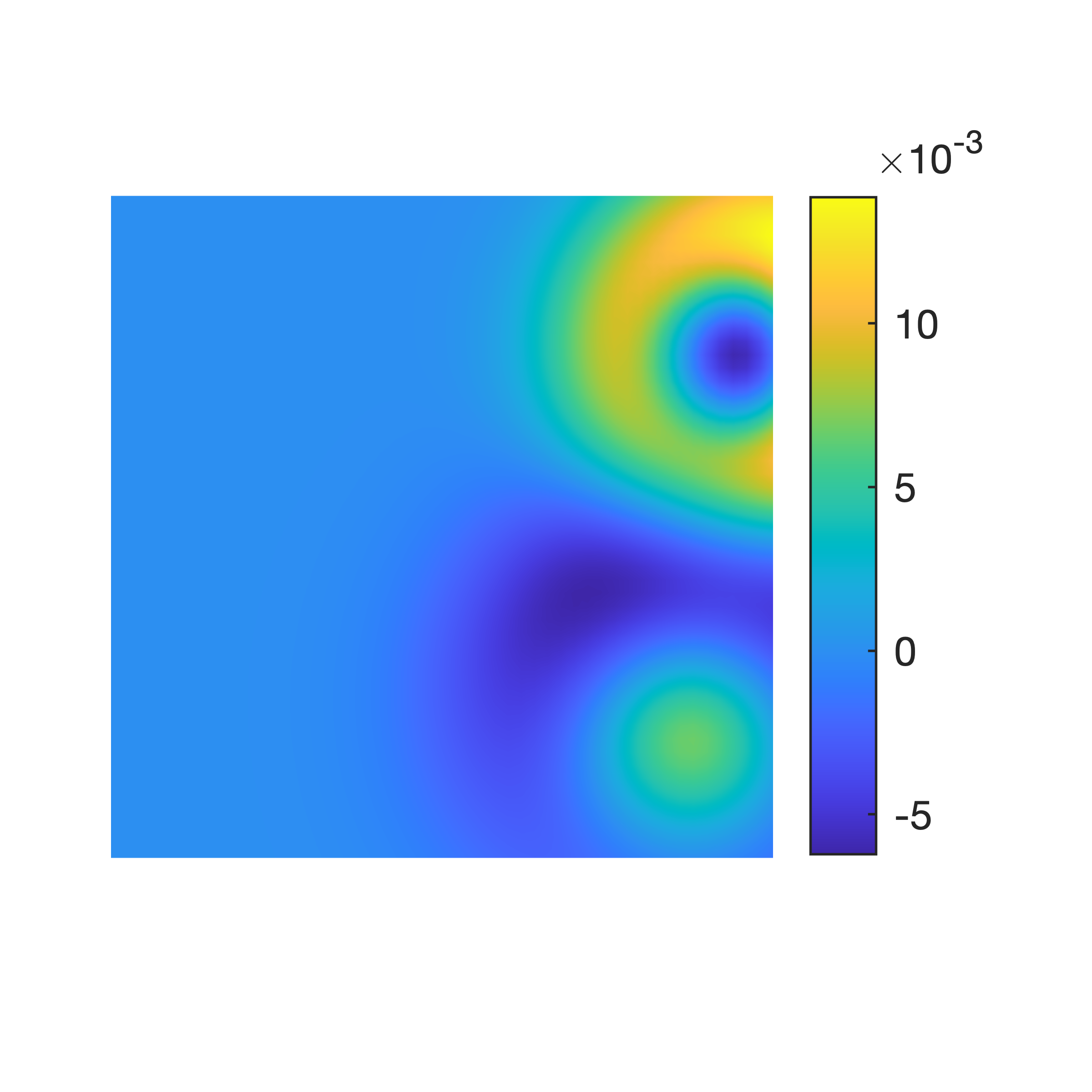} \\
			\includegraphics[width=0.24\textwidth,trim=0cm 1.5cm 0cm 0.5cm,clip]{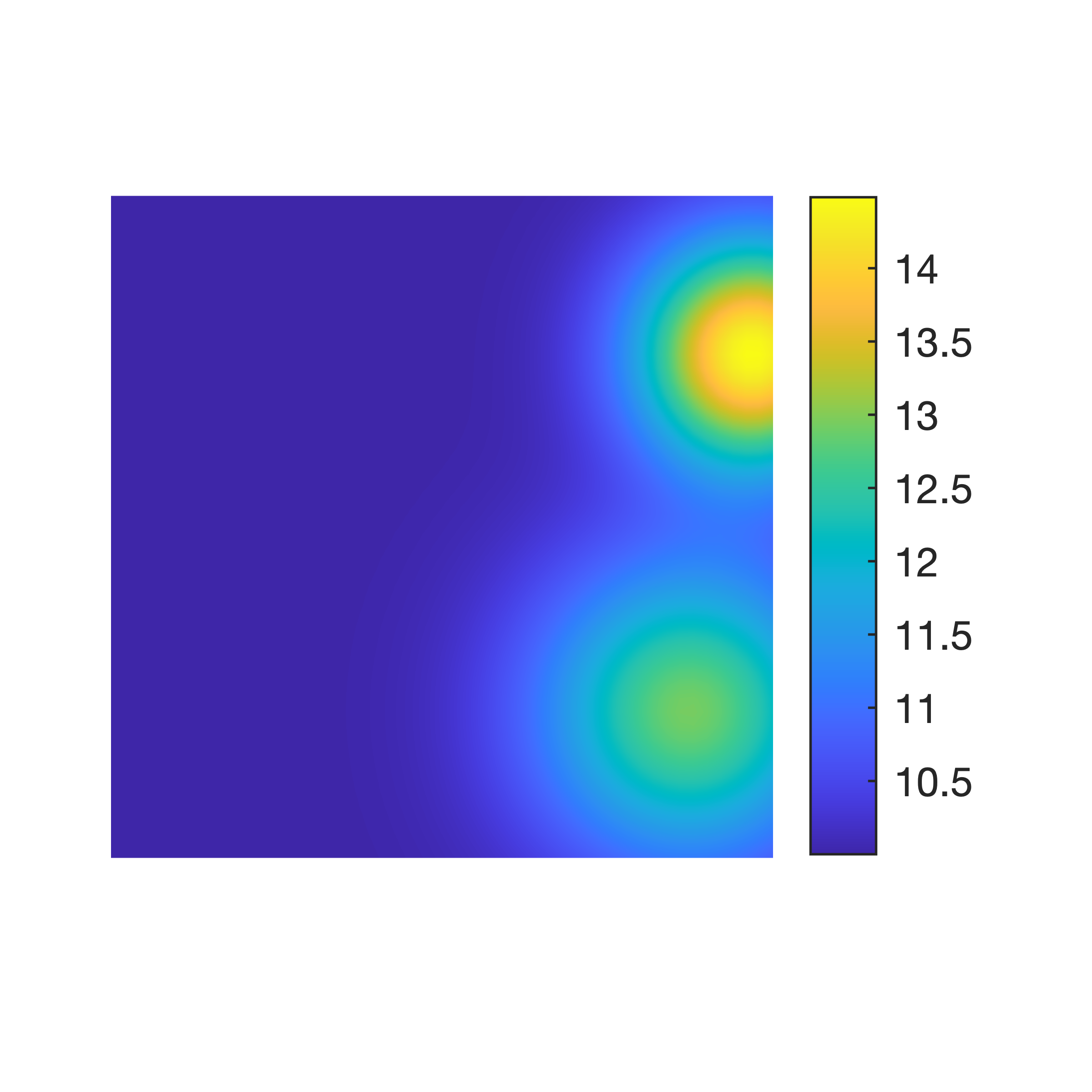}
		\includegraphics[width=0.24\textwidth,trim=0cm 1.5cm 0cm 0.5cm,clip]{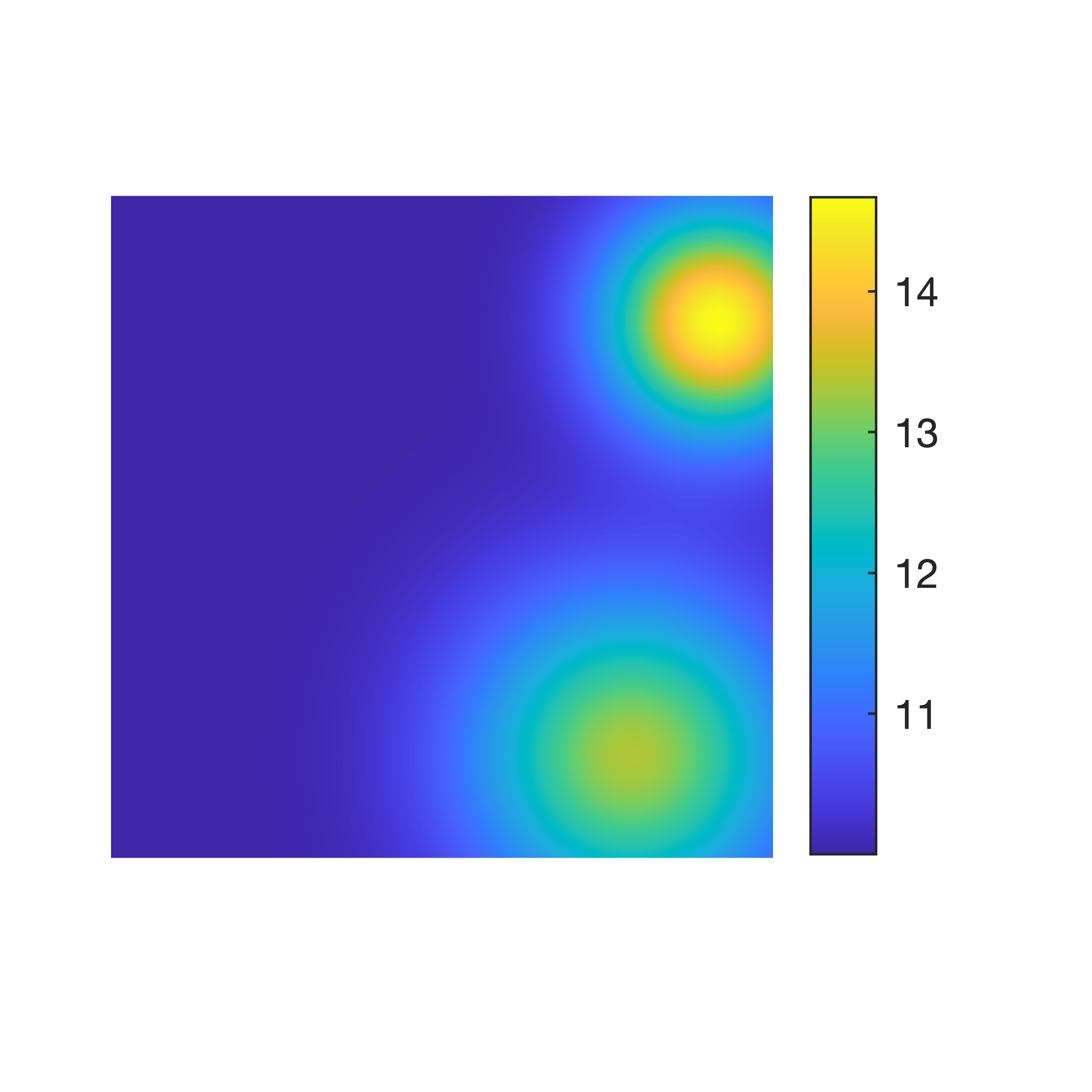} 
		\includegraphics[width=0.24\textwidth,trim=0cm 1.5cm 0cm 0.5cm,clip]{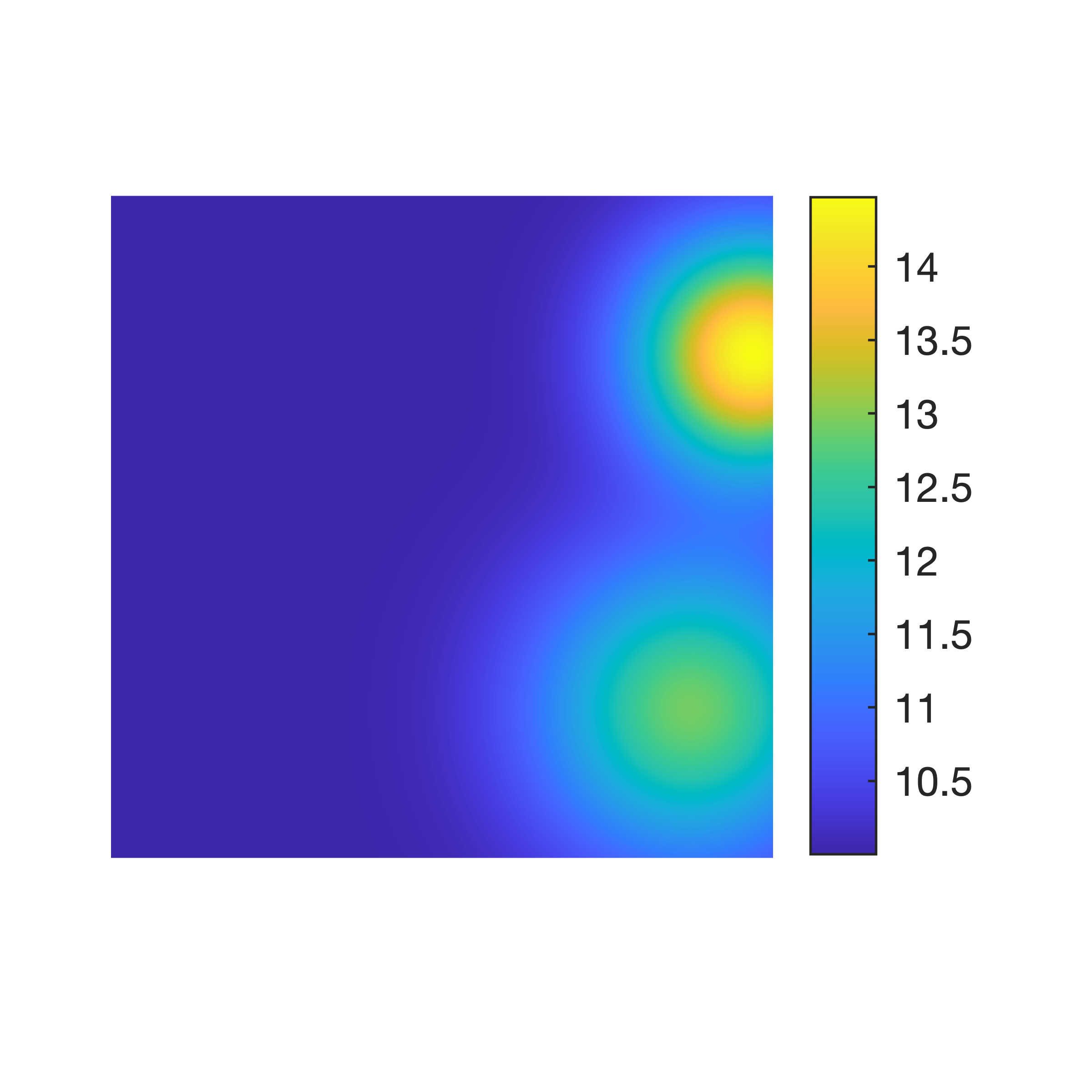}
		\includegraphics[width=0.24\textwidth,trim=0cm 1.5cm 0cm 0.5cm,clip]{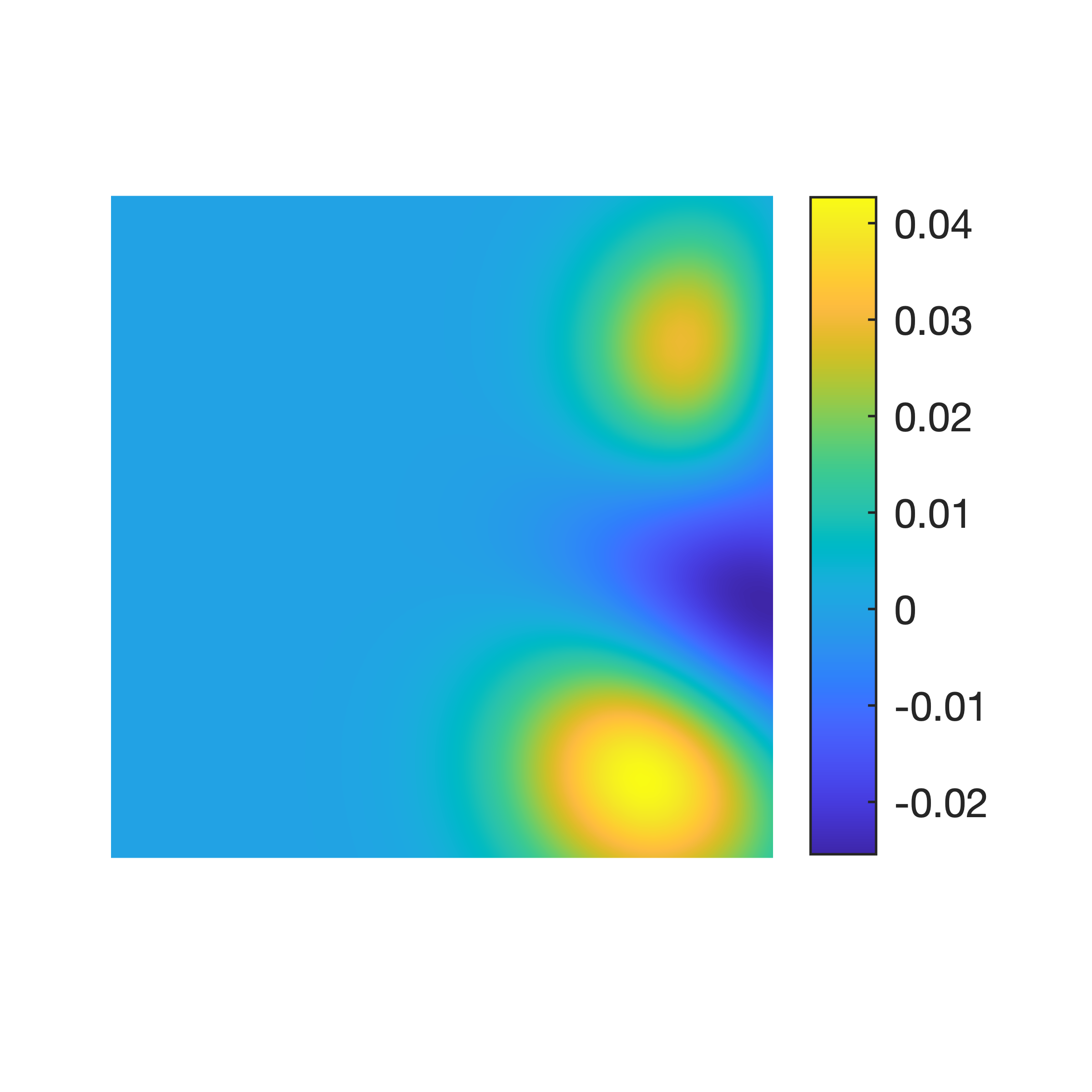} 
	\caption{\small{The reconstructed velocity images for the mixed Gaussian (\ref{model1_M2}). From the left to the right are the ground true velocity field, the reconstructed velocity field with $J = 1$, the reconstructed velocity field with $J = 20$, and the difference between the ground true velocity field and the reconstructed velocity field with $J = 20$ (first column - third column ). From the top to the bottom are the results from the noise-free wave signal, the wave signal with $10\%$ multiplication Gaussian noise, and the wave signal with $10\%$ additive Gaussian noise.}}\label{two_gaussian}
\end{figure}

\begin{table}
 	\footnotesize
 	\begin{center}
 		\scalebox{0.98}{
 			\begin{tabular}{c c c c c c c c c c}
 				\hline\\[-1ex]
                & \multicolumn{3}{c}{no noise} & \multicolumn{3}{c}{$10\%$ multiplicative noise} & \multicolumn{3}{c}{$10\%$ additive noise}\\[1ex]
 				\cline{2-10}\\[-1ex]
 				~& $L^2$  & $L^{\infty}$ &  CPU& $L^2$ & $L^{\infty}$  &  CPU & $L^2$ & $L^{\infty}$ &  CPU \\
 				$J$ &error & error &  time($s$)& error & error &  time($s$)&  error & error &  time($s$) \\
 				\hline
 				1 & 1.48e-01&  1.12e-00&  0 & 2.54e-01 & 1.98e-00 & 0 & 2.60e-01 & 1.60e-00 & 0\\
 				20& 1.10e-04 & 1.11e-03 & 6.83 & 1.71e-03 & 1.39e-02 & 6.71 & 5.79e-03 & 4.27e-02 &6.89 \\
 				40&  3.21e-06&  2.60e-05& 13.80& 4.84e-05 & 5.48e-04 & 13.60 & 5.88e-03 & 4.28e-02 & 13.38\\
 				60& 3.09e-06 & 2.41e-05 & 20.34& 5.78e-06 & 4.38e-05 & 20.97 & 5.89e-03 & 4.28e-02 & 20.51\\
 				80& 2.86e-06 & 2.66e-05 & 27.74& 4.63e-06 & 4.36e-05 & 28.52 & 5.89e-03 & 4.28e-02 & 27.56\\
 				\hline
 			\end{tabular}
 		}
 	\end{center}
 	\caption{\small{$L^2/L^\infty$ reconstruction errors, and the CPU time for the inversion stage with different $J$-term truncated Neumann series approximation, as well as different noise level/form for the reconstruction of the mixed Gaussian~\eqref{model1_M2}.}}\label{model1_M2_errors}
 \end{table}

\subsubsection{Inversion for the velocity model \texorpdfstring{\eqref{EQ:Velocity Model 1}}{} with \texorpdfstring{$M = 4$}{}}
\label{numerical_ex_4modes}

For the second inversion example, we work on reconstructing the features of the following velocity model
\begin{equation}\label{model2_M4}
m({\bf x}) = \sum_{k_x,k_z = 0}^4 \fm(\bk) \cos(k_x\pi x)\cos(k_z\pi z),\quad {\bk} = (k_x, k_z)
\end{equation}
with $\alpha = 0$ in \eqref{EQ:decay_rule}, namely, we don't consider any decay on the coefficients for this example. In addition, we use the same training settings as those in Section~\ref{SUBSEC:Data Generation} for the Fourier wave signal generation. But for the external top sources $h_i(x)$, we choose them to be the same as the sources in Section \ref{Sec:two_gaussian} to generate resourceful training samples for the construction of $\wh{\bf f}_{\wh{\theta}}^{-1}$.  

  For the online inversion stage, we also implement a $J$-term truncated Neumann series approximation \eqref{EQ:Neumann J} to recover the velocity model. To test the stability of the proposed coupled scheme, as in Section \ref{Sec:two_gaussian}, we add the Gaussian noise with zeros mean and $10\%$ standard derivation to the wave signals. Figure \ref{five_fourier} presents the surface plots of the reconstructed velocity images with $J = 20$, as well as the surface plots for the difference between the reconstructed image and the ground true velocity model. The layout of Figure \ref{five_fourier} is the same as the one in Figure \ref{two_gaussian}. We observe that the training prediction is stable with respect to the noise, see the second column of Figure \ref{five_fourier} and $L^2/L^\infty$ errors when $J = 1$ in Table \ref{model2_M4_errors}. In addition, we note that the inversion stage can significantly improve the accuracy of the reconstruction. For the data without noise, the errors dropped by a factor $\sim 10^7$ within $30$ seconds; even for the data with $10\%$ Gaussian noise, the errors also dropped by a factor $\sim 10^3$ within $30$ seconds.

\begin{figure}[!hbt]
	\centering
	\includegraphics[width=0.24\textwidth,trim=0cm 1.5cm 0cm 0cm,clip]{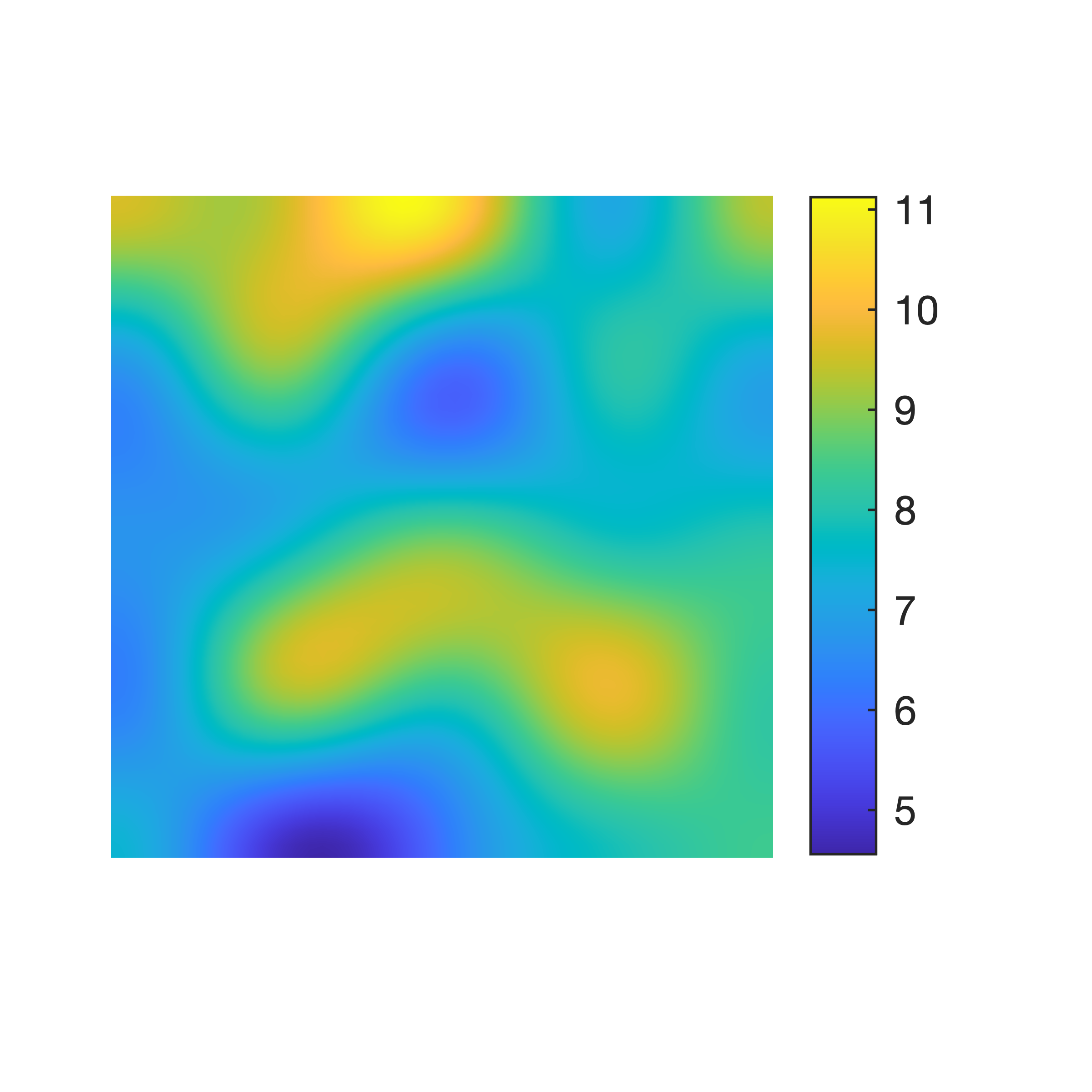}
	\includegraphics[width=0.24\textwidth,trim=0cm 1.5cm 0cm 0cm,clip]{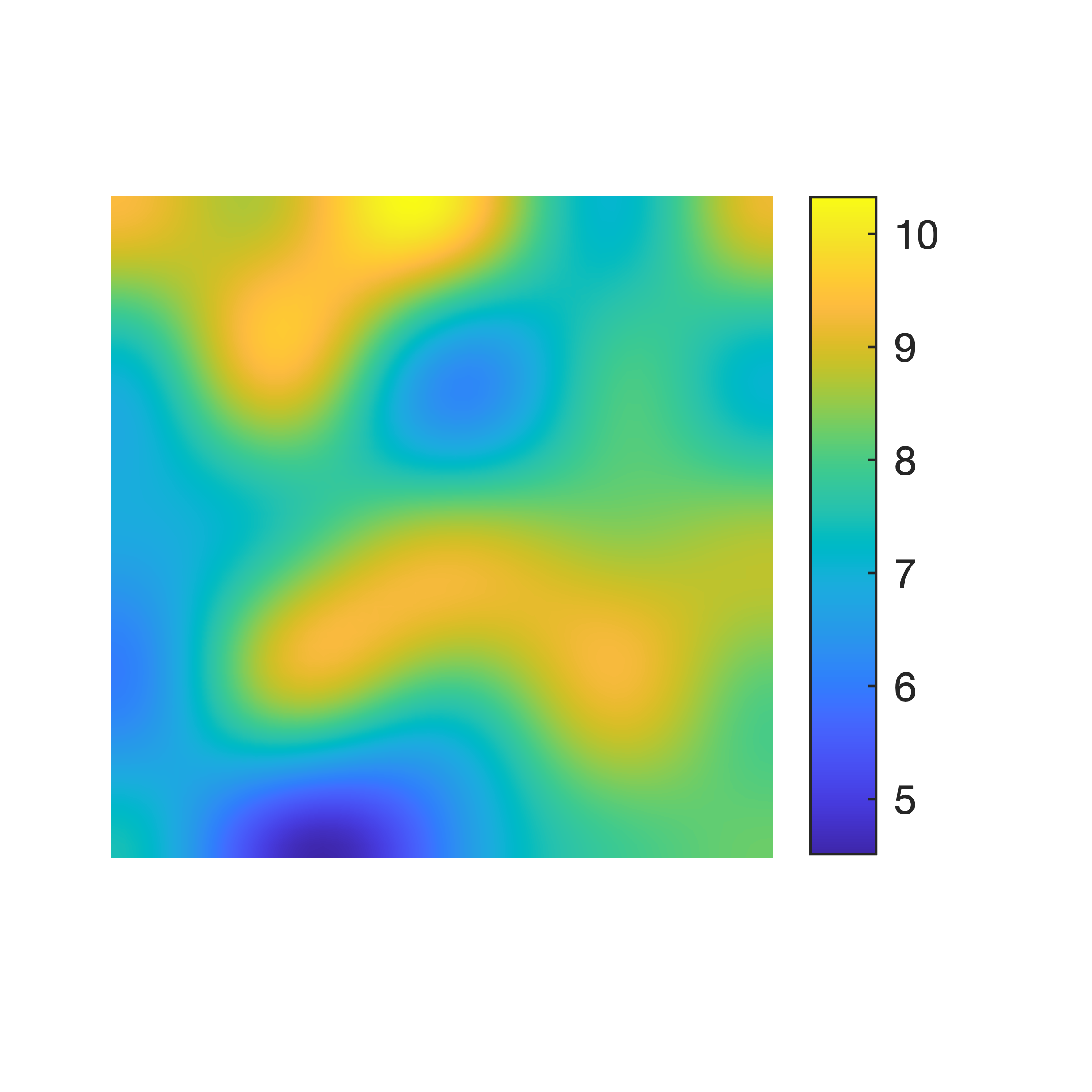} 
	\includegraphics[width=0.24\textwidth,trim=0cm 1.5cm 0cm 0cm,clip]{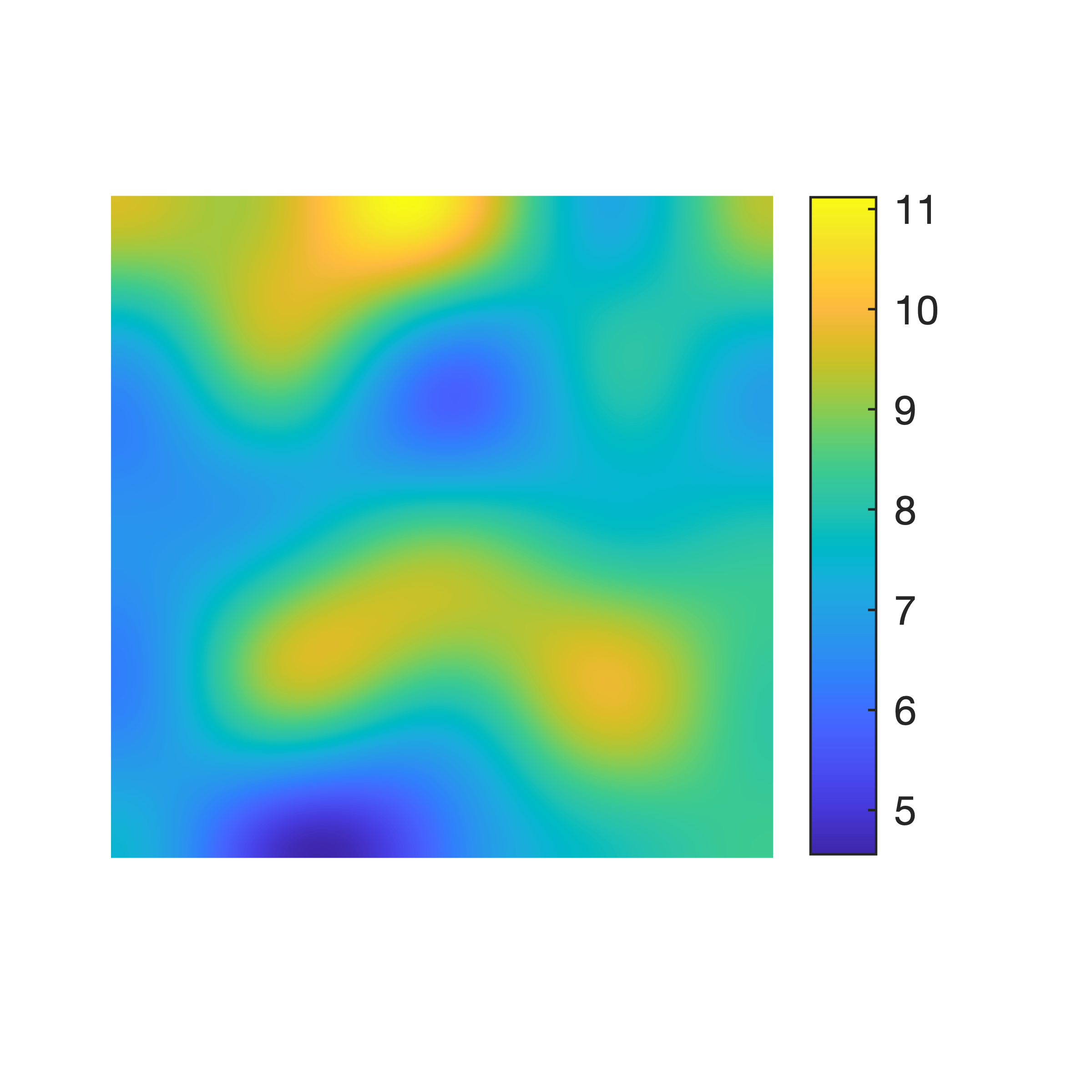}
	\includegraphics[width=0.24\textwidth,trim=0cm 1.5cm 0cm 0cm,clip]{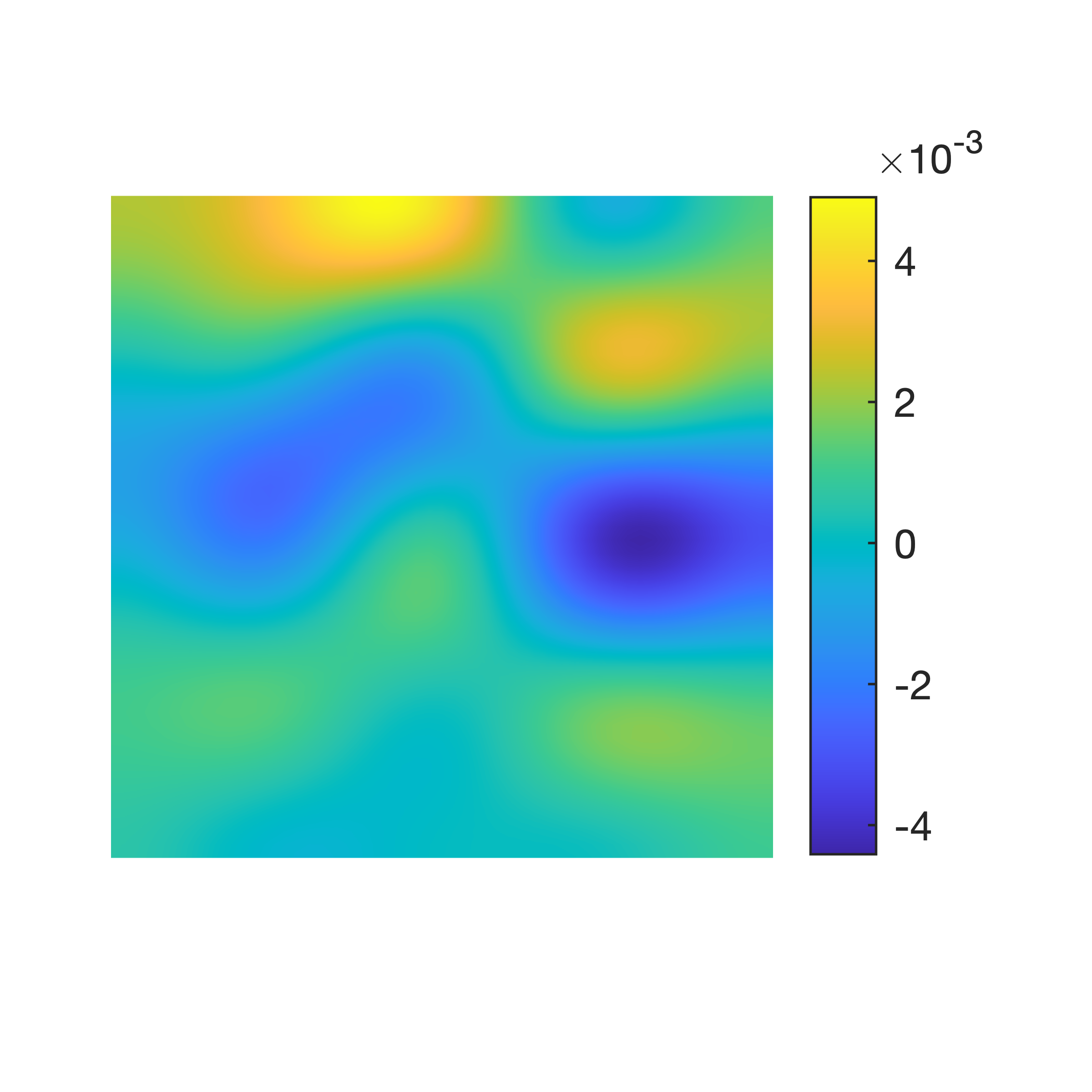}\\
	\includegraphics[width=0.24\textwidth,trim=0cm 1.5cm 0cm 1cm,clip]{true_mode_F5}
	\includegraphics[width=0.24\textwidth,trim=0cm 1.5cm 0cm 1cm,clip]{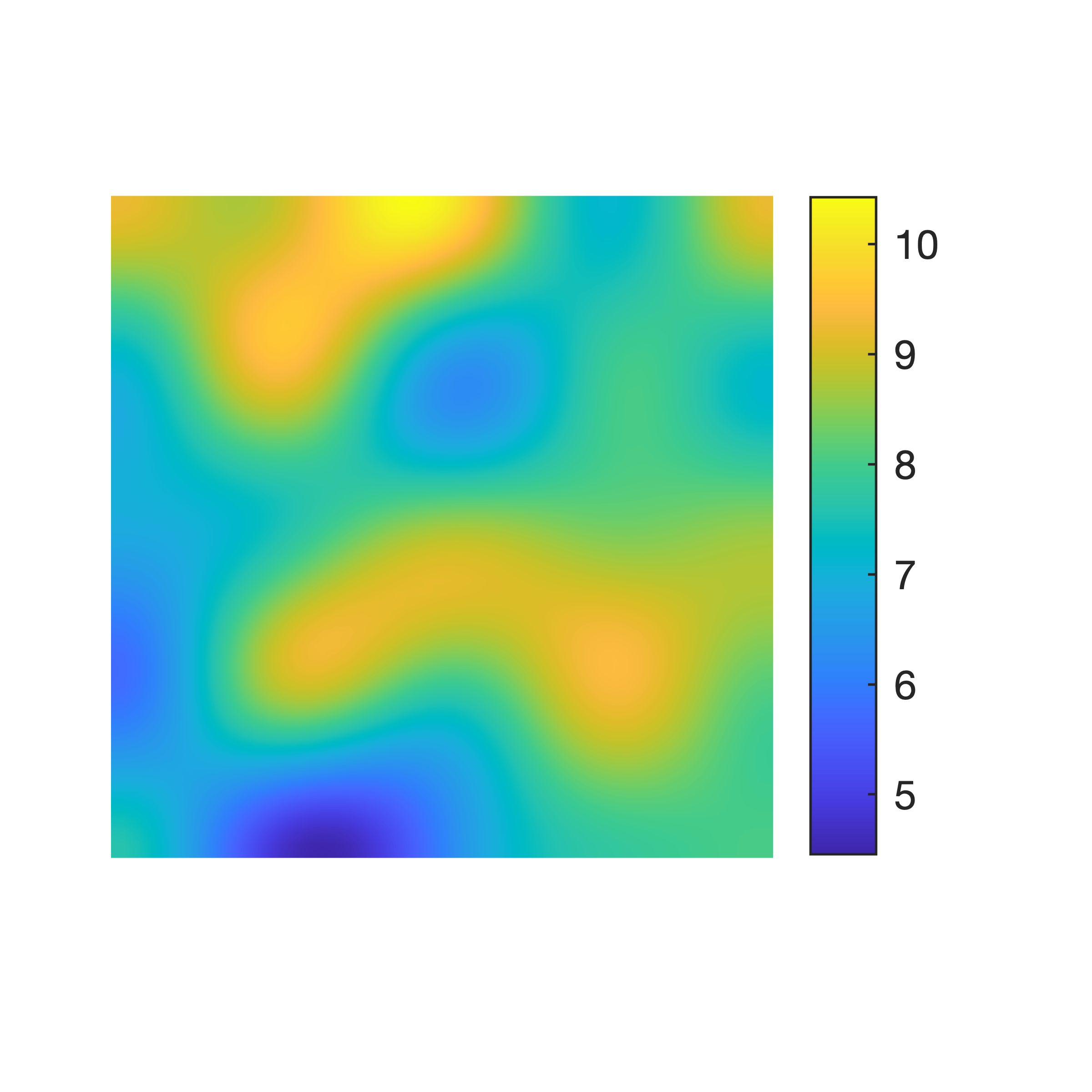} 
	\includegraphics[width=0.24\textwidth,trim=0cm 1.5cm 0cm 1cm,clip]{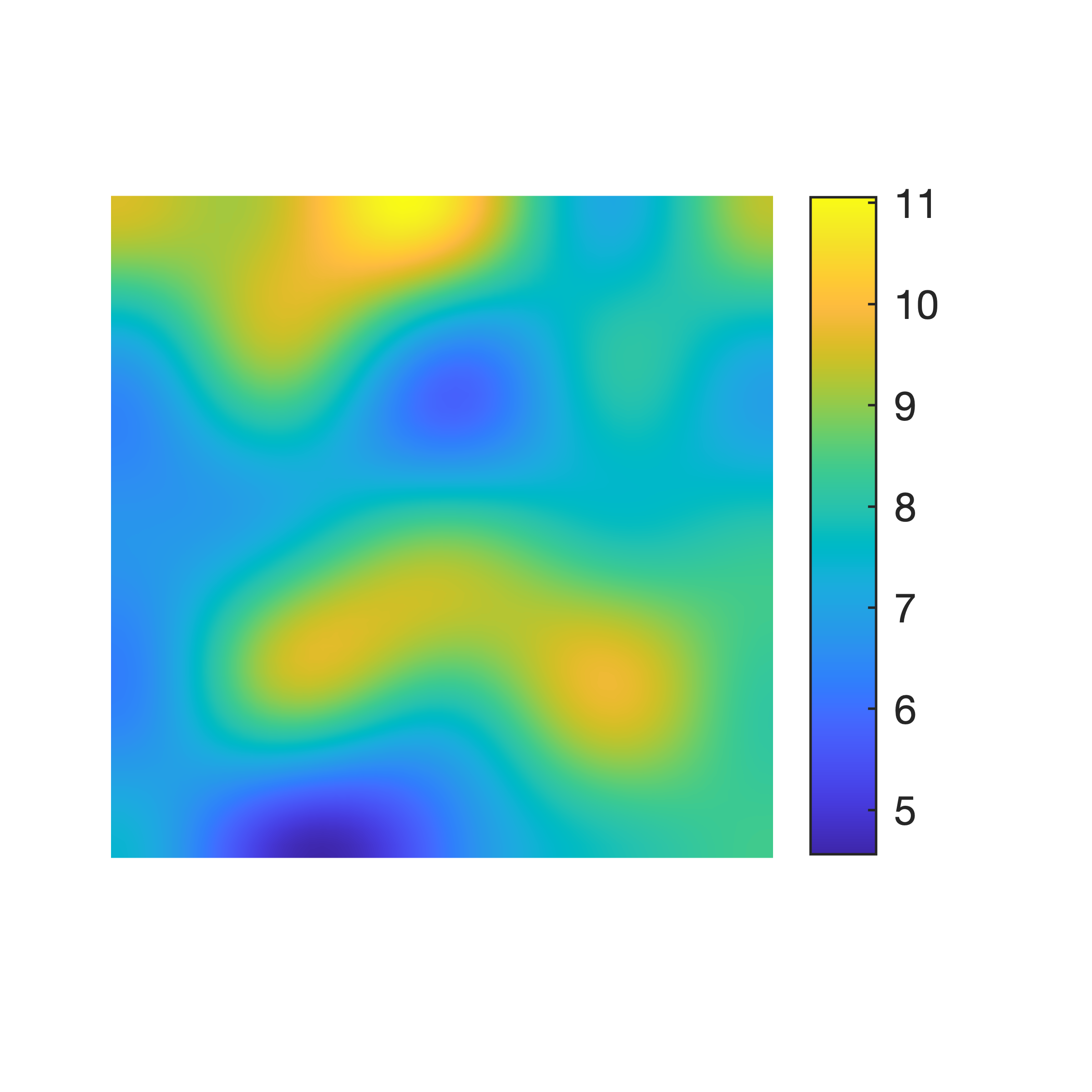}
	\includegraphics[width=0.24\textwidth,trim=0cm 1.5cm 0cm 1cm,clip]{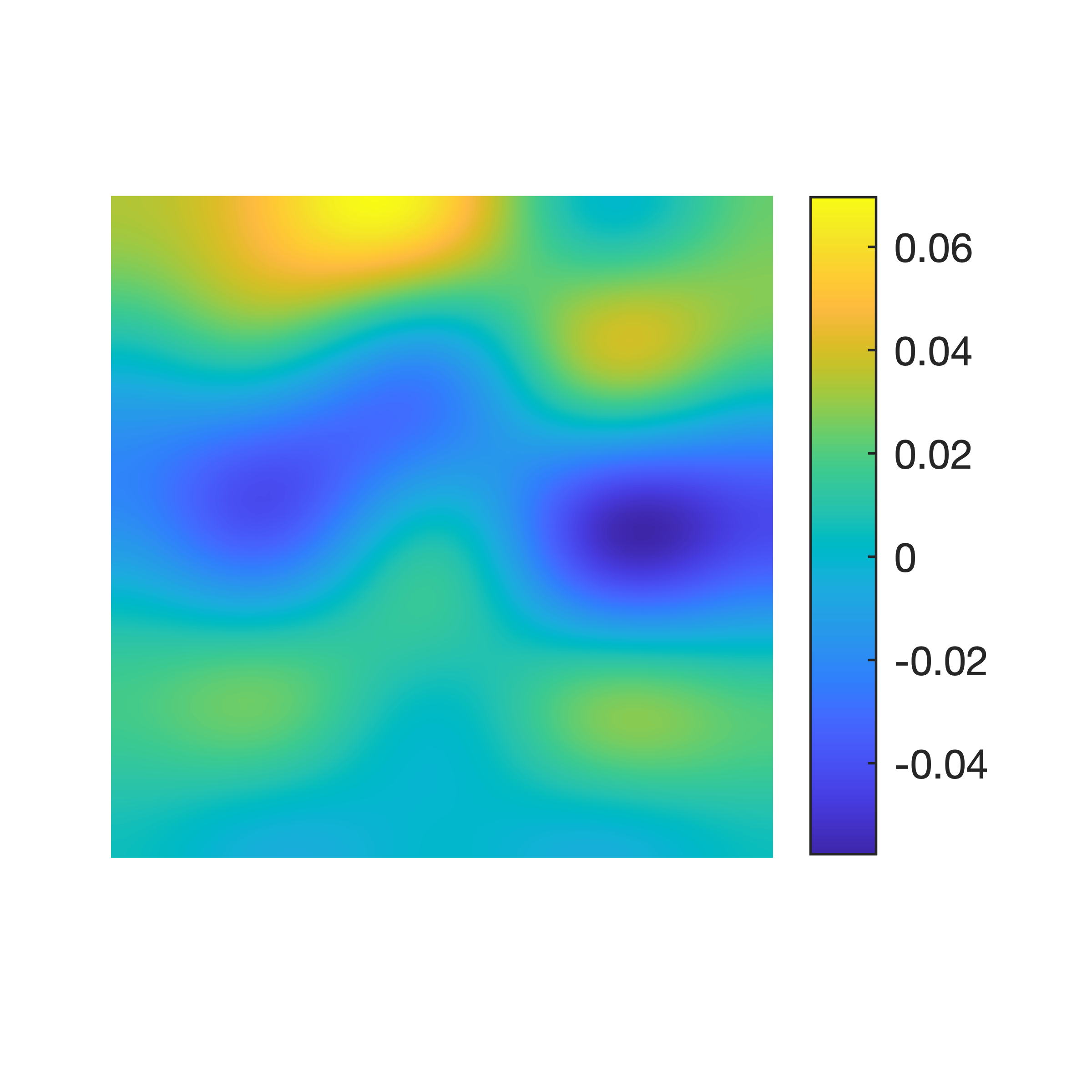}\\
	\includegraphics[width=0.24\textwidth,trim=0cm 1.5cm 0cm 1cm,clip]{true_mode_F5}
	\includegraphics[width=0.24\textwidth,trim=0cm 1.5cm 0cm 1cm,clip]{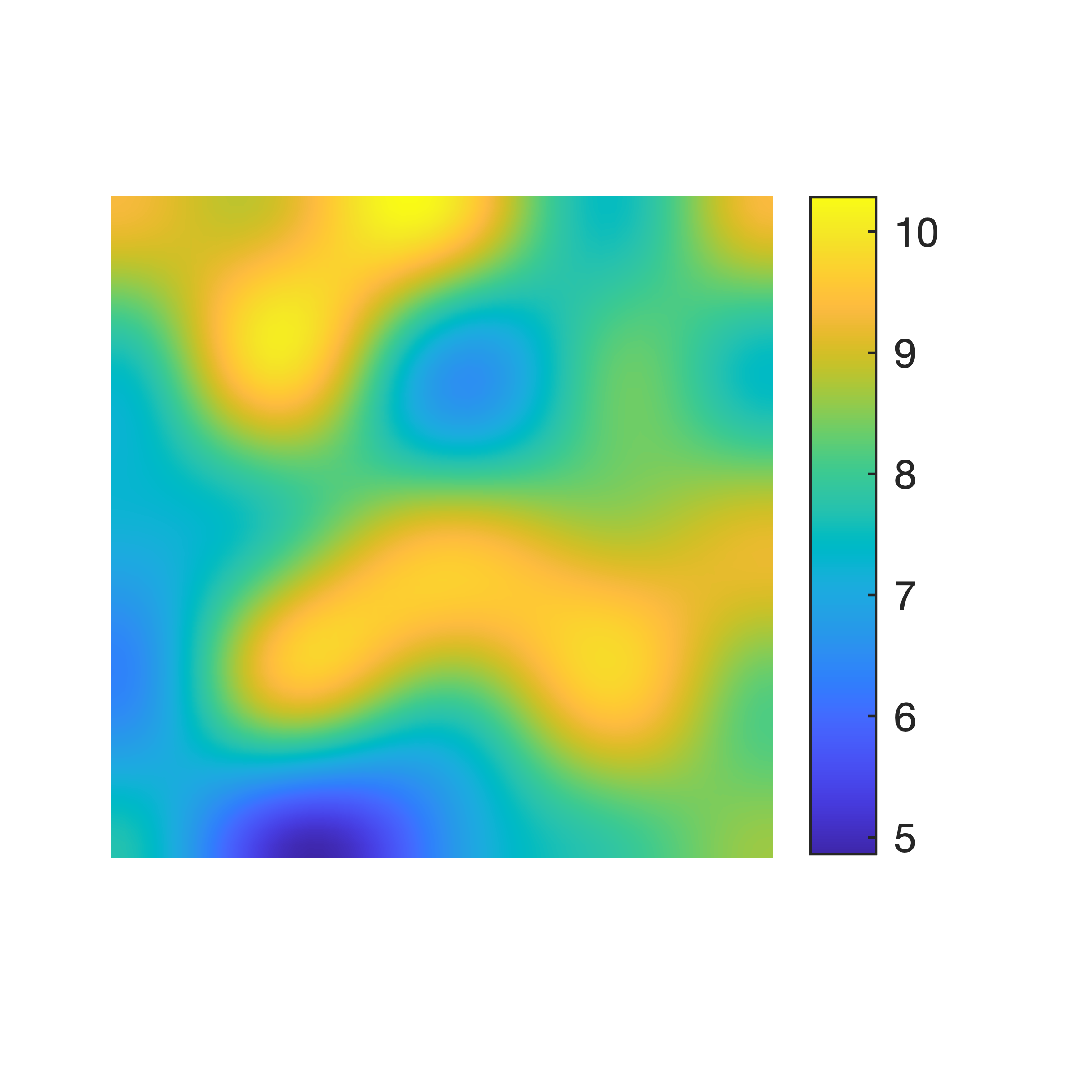} 
	\includegraphics[width=0.24\textwidth,trim=0cm 1.5cm 0cm 1cm,clip]{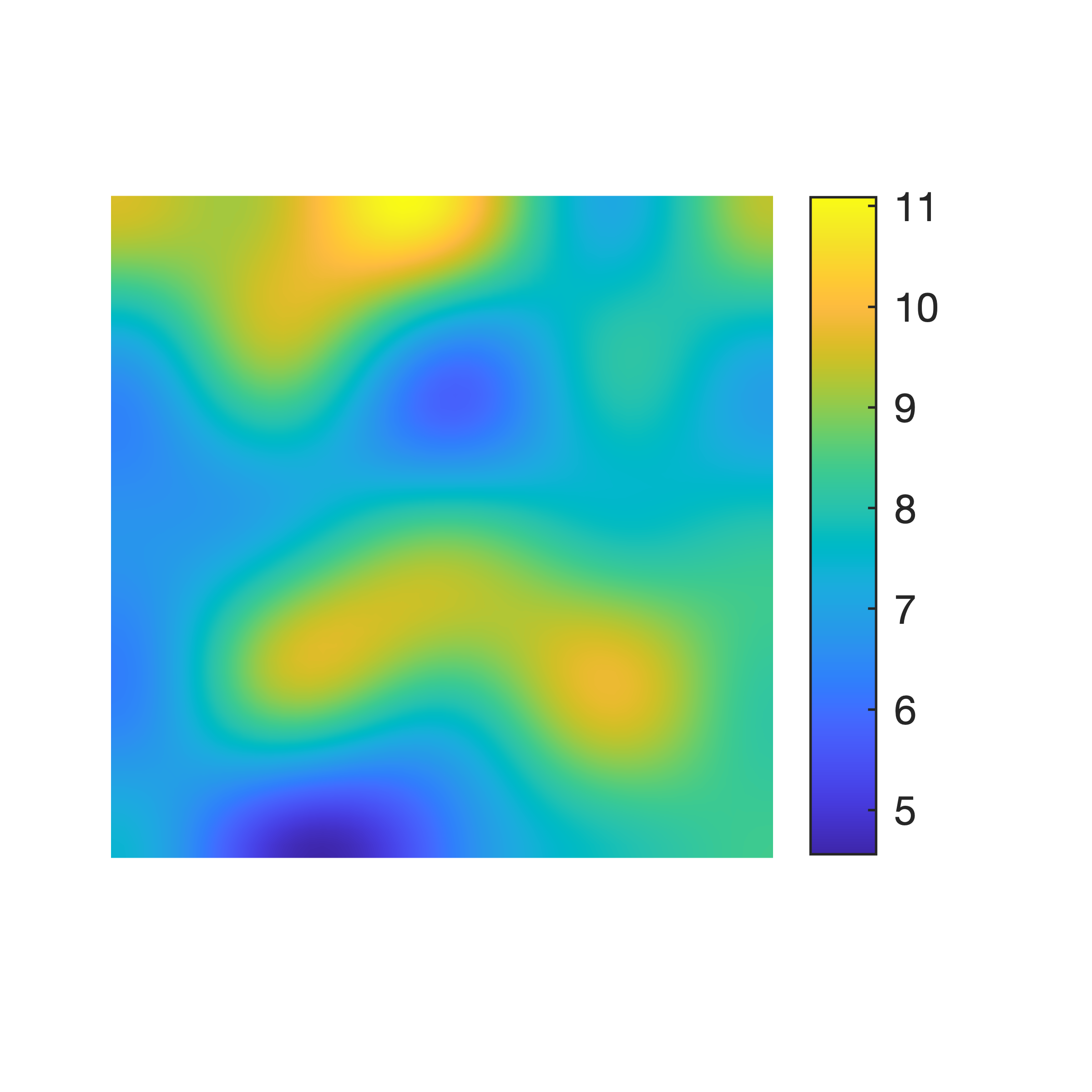}
	\includegraphics[width=0.24\textwidth,trim=0cm 1.5cm 0cm 1cm,clip]{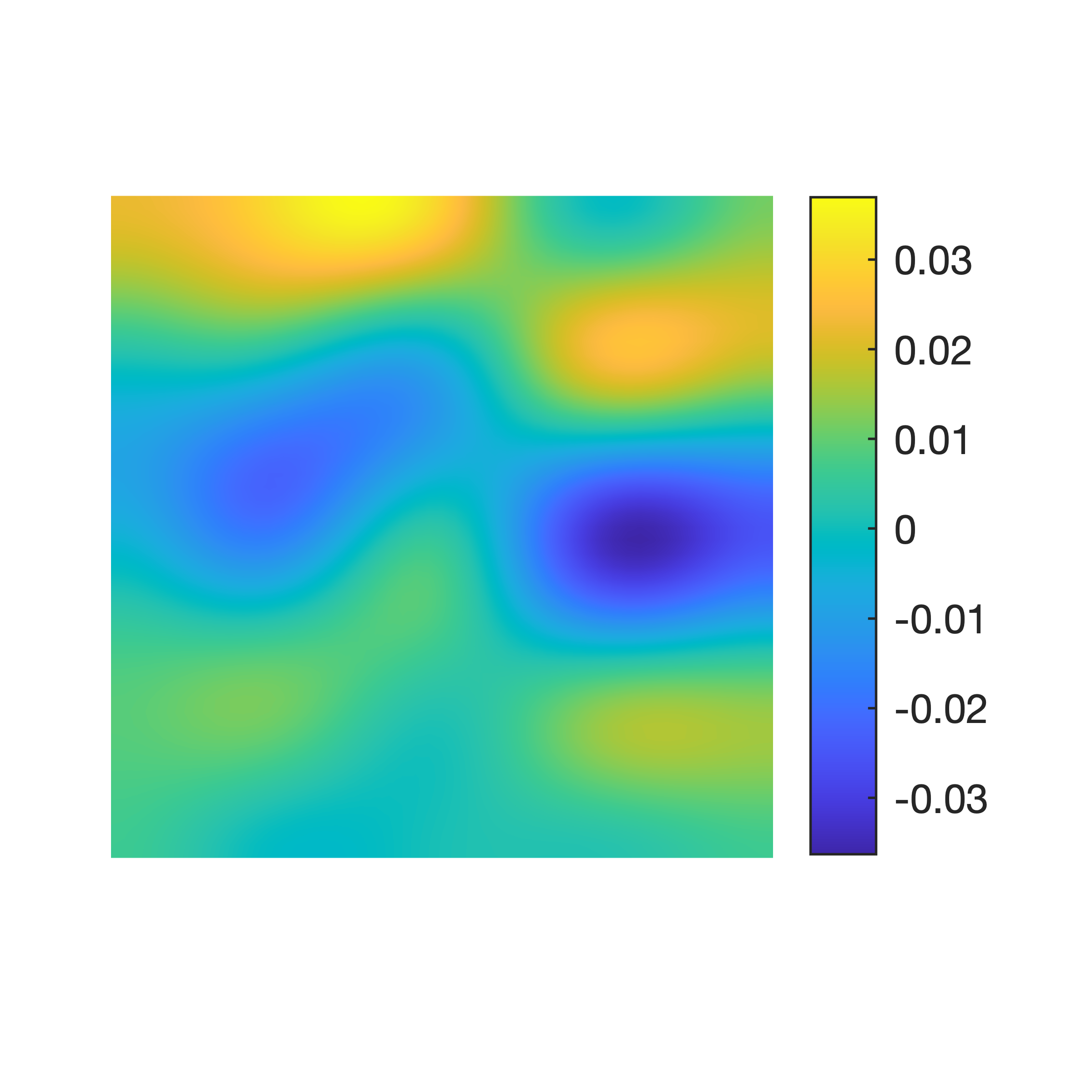}\\
	\caption{\small{The reconstructed velocity images for the general Fourier type (\ref{EQ:Velocity Model 1}) with $M = 4$. From the top to the bottom are for the velocity reconstruction without noise, with $10\%$ multiplication Gaussian noise, with $10\%$ additive Gaussian noise, respectively. While from the left to the right are the ground true velocity field, the reconstructed velocity field from the neural network in the offline training stage, and the reconstructed velocity image with $J = 20$, error for the reconstructed velocity image with $J = 20$, respectively.}}\label{five_fourier}
\end{figure}

 \begin{table}
 	\footnotesize
 	\begin{center}
 		\scalebox{1.0}{
 			\begin{tabular}{c c c c c c c c c c}
 				\hline\\[-1ex] 
                & \multicolumn{3}{c}{no noise} & \multicolumn{3}{c}{$10\%$ multiplicative noise} & \multicolumn{3}{c}{$10\%$ additive noise}\\[1ex]
 				\cline{2-10}\\[-1ex]
                ~& $L^2$  & $L^{\infty}$ &  CPU& $L^2$ & $L^{\infty}$  &  CPU & $L^2$ & $L^{\infty}$ &  CPU \\
 				$J$ &error & error &  time($s$)& error & error &  time($s$)&  error & error &  time($s$) \\
 				\hline
 				1 & 1.78e-01&  8.46e-01&  0 & 1.79e-00 & 7.25e-00 & 0 & 2.64e-01 & 1.16e-00 & 0\\
 				20& 8.52e-04 & 4.90e-03 & 6.08 & 1.23e-02 & 6.97e-02 & 5.81 & 7.13e-03 & 3.70e-02 &5.87 \\
 				40&  1.49e-05&  8.41e-05& 11.15& 4.93e-03 & 2.76e-02 & 11.24 & 1.69e-03 & 9.26e-03 & 11.73\\
 				60& 2.60e-07 & 1.53e-06 & 17.97& 3.06e-03 & 1.71e-02 & 17.42 & 6.81e-04 & 3.77e-03 & 17.67\\
 				80& 2.16e-08 & 1.34e-07 & 22.74& 2.19e-03 & 1.22e-02 & 23.25 & 2.05e-04 & 1.12e-03 & 23.13\\
 				\hline
 			\end{tabular}
 		}
 	\end{center}
 	\caption{\small{$L^2/L^\infty$ reconstruction errors, and the CPU time for the inversion stage with different $J$-term truncated Neumann series approximation, as well as different noise level/form for the reconstruction of the Fourier model (\ref{model2_M4}).}}\label{model2_M4_errors}
 \end{table}

\subsubsection{Inversion for the velocity model \texorpdfstring{\eqref{EQ:Velocity Model 1}}{} with \texorpdfstring{$M = 7$}{}}

For the third example, we consider a velocity model that contains $8$ Fourier modes in each direction, namely,
\begin{equation}\label{model2_M7}
m({x,z}) = \sum_{k_x,k_z = 0}^{7} \fm(\bk) \cos(k_x\pi x)\cos(k_z\pi z).
\end{equation}
The spatial and temporal discretization, as well as the rules for data generation, the choice of the top source $h_i(x)$ are the same as the example in Section~\ref{numerical_ex_4modes}.
 
For the inversion stage, we again implement a $J$-term truncated Neumann series approximation~\eqref{EQ:Neumann J} to obtain the reconstructed velocity image. Figure \ref{eight_fourier} presents the surface plots of the reconstructed velocity images with various values of $J$ in the online inversion stage. Precisely, each row of Figure \ref{eight_fourier} corresponds to one velocity model; from the left to the right are the ground true velocity field, the reconstructed velocity image with $J = 1$, the reconstructed velocity image with $J = 20$, and the reconstructed velocity image with $J = 50$, respectively. We note that the online inversion stage improves the accuracy of the reconstruction for all cases, which verifies the effectiveness of the proposed coupling scheme.

\begin{figure}[!htb]
	\centering
	\includegraphics[width=0.24\textwidth,trim=0cm 1.5cm 0cm 1cm,clip]{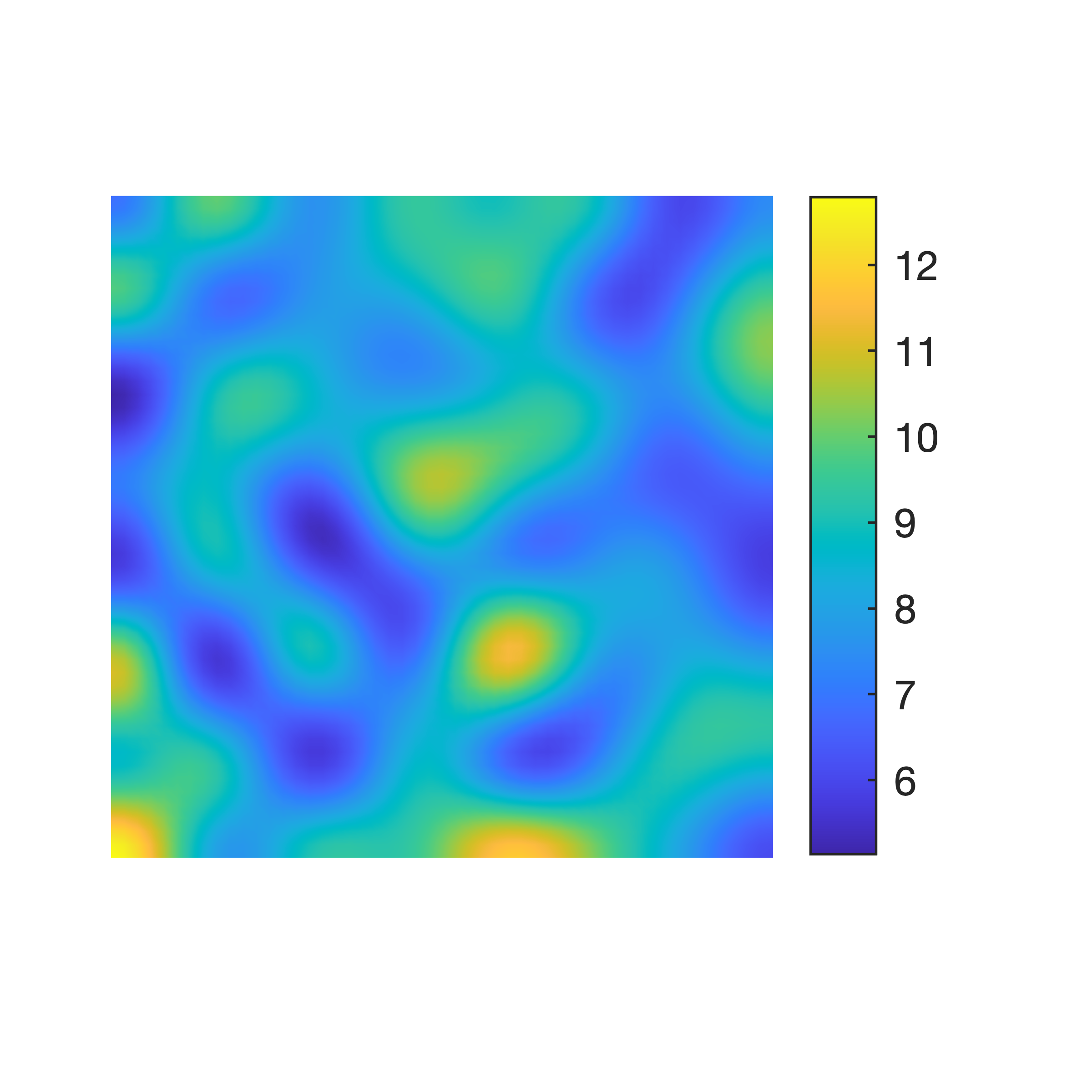}
	\includegraphics[width=0.24\textwidth,trim=0cm 1.5cm 0cm 1cm,clip]{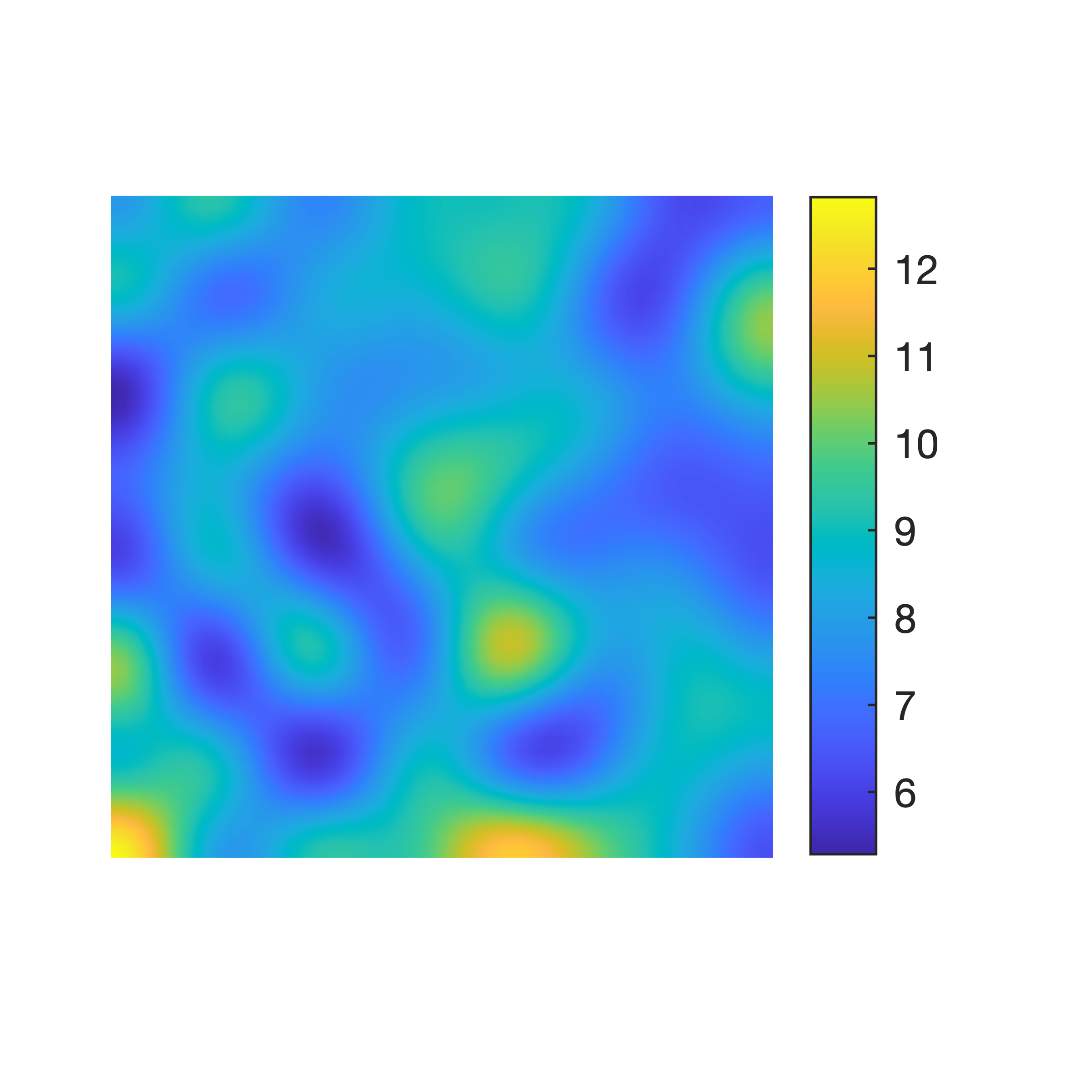} 
	\includegraphics[width=0.24\textwidth,trim=0cm 1.5cm 0cm 1cm,clip]{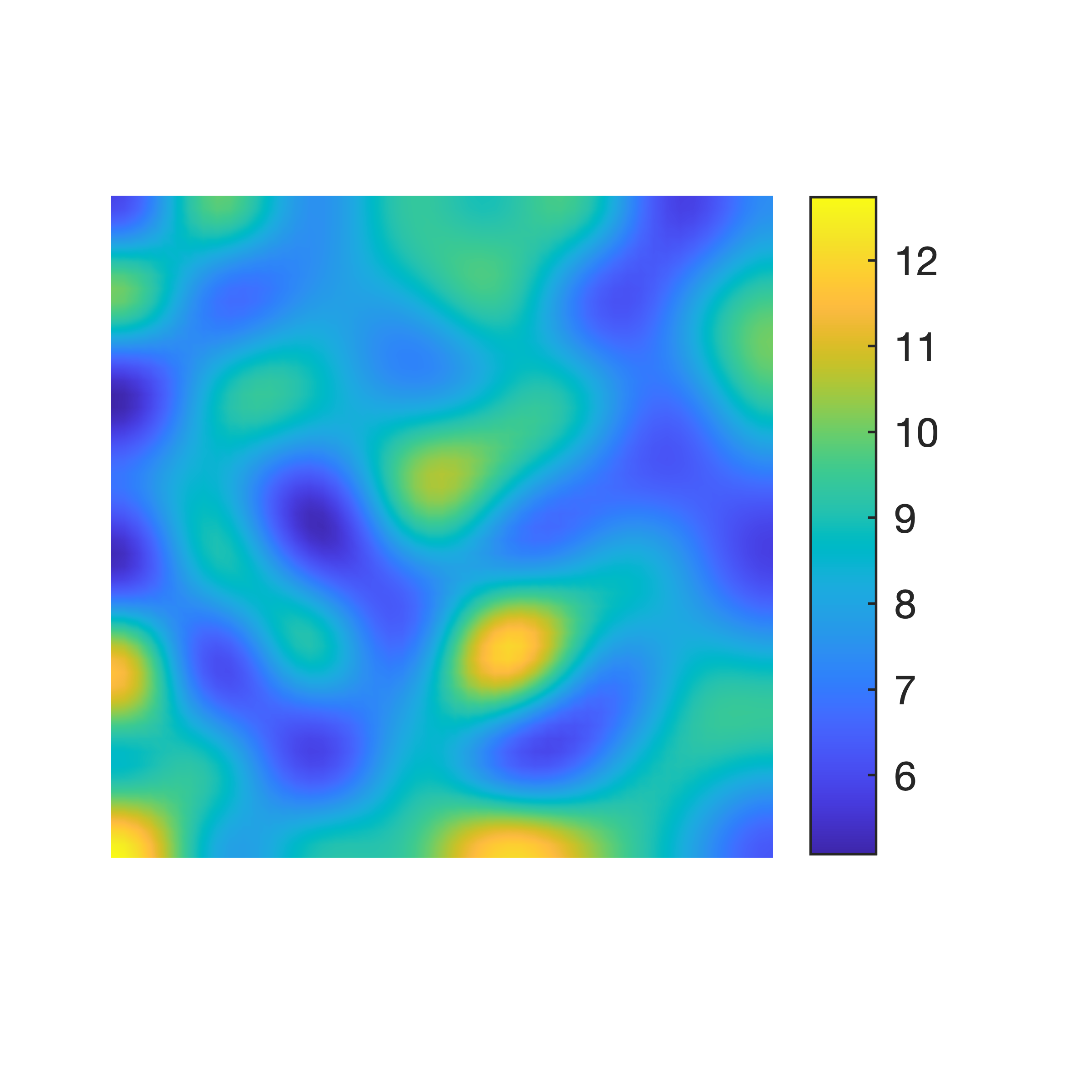}
	\includegraphics[width=0.24\textwidth,trim=0cm 1.5cm 0cm 1cm,clip]{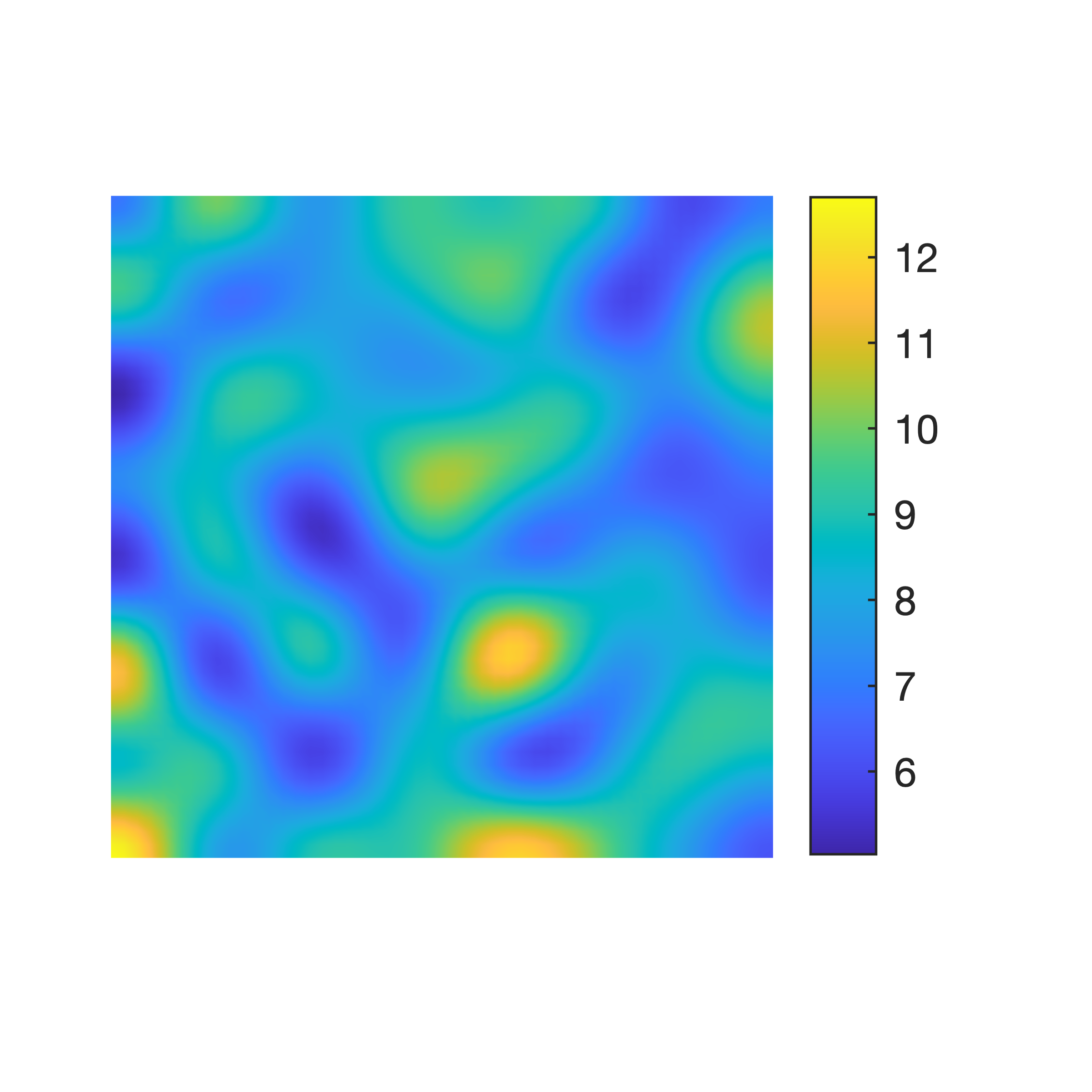} \\
	\includegraphics[width=0.24\textwidth,trim=0cm 1.5cm 0cm 1cm,clip]{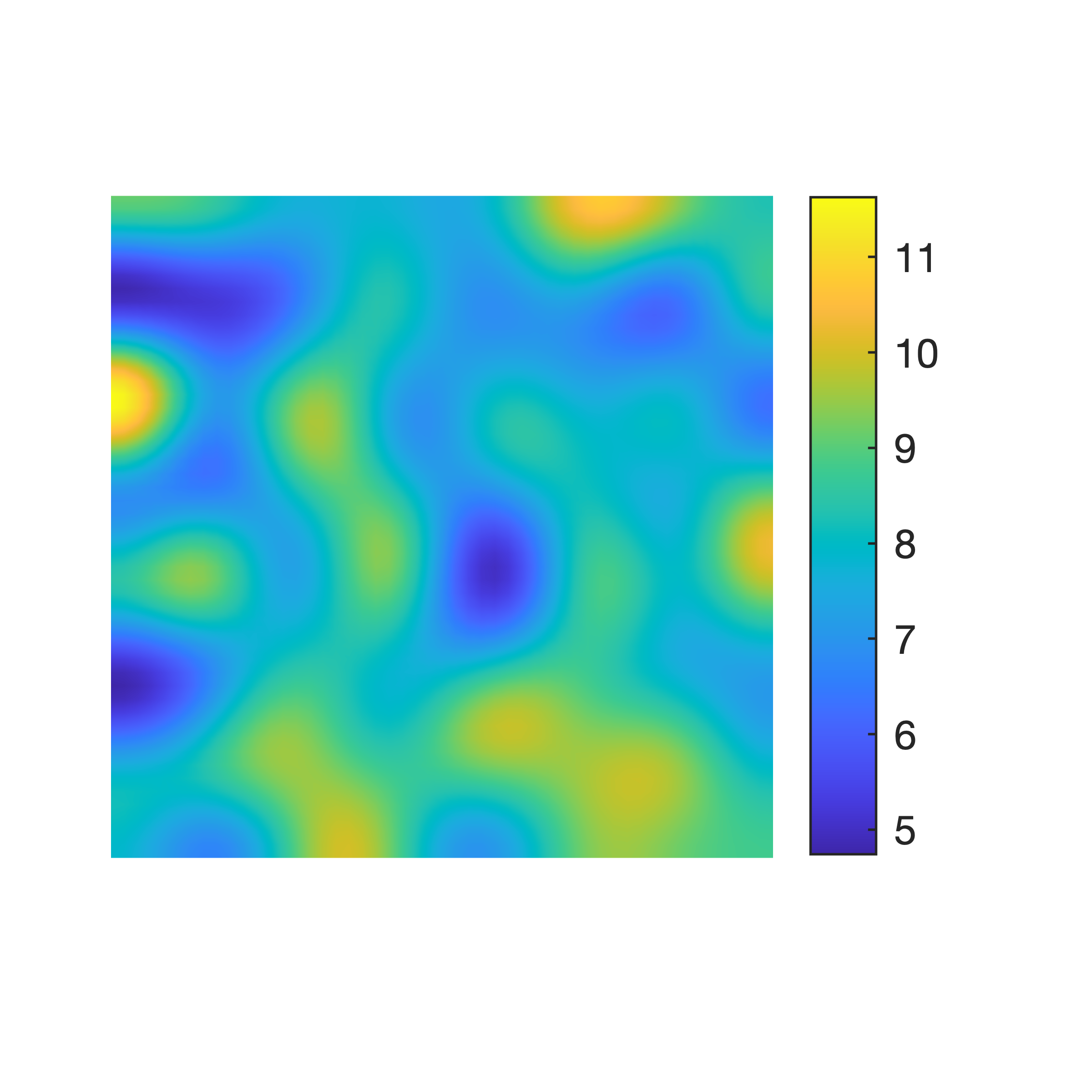}
	\includegraphics[width=0.24\textwidth,trim=0cm 1.5cm 0cm 1cm,clip]{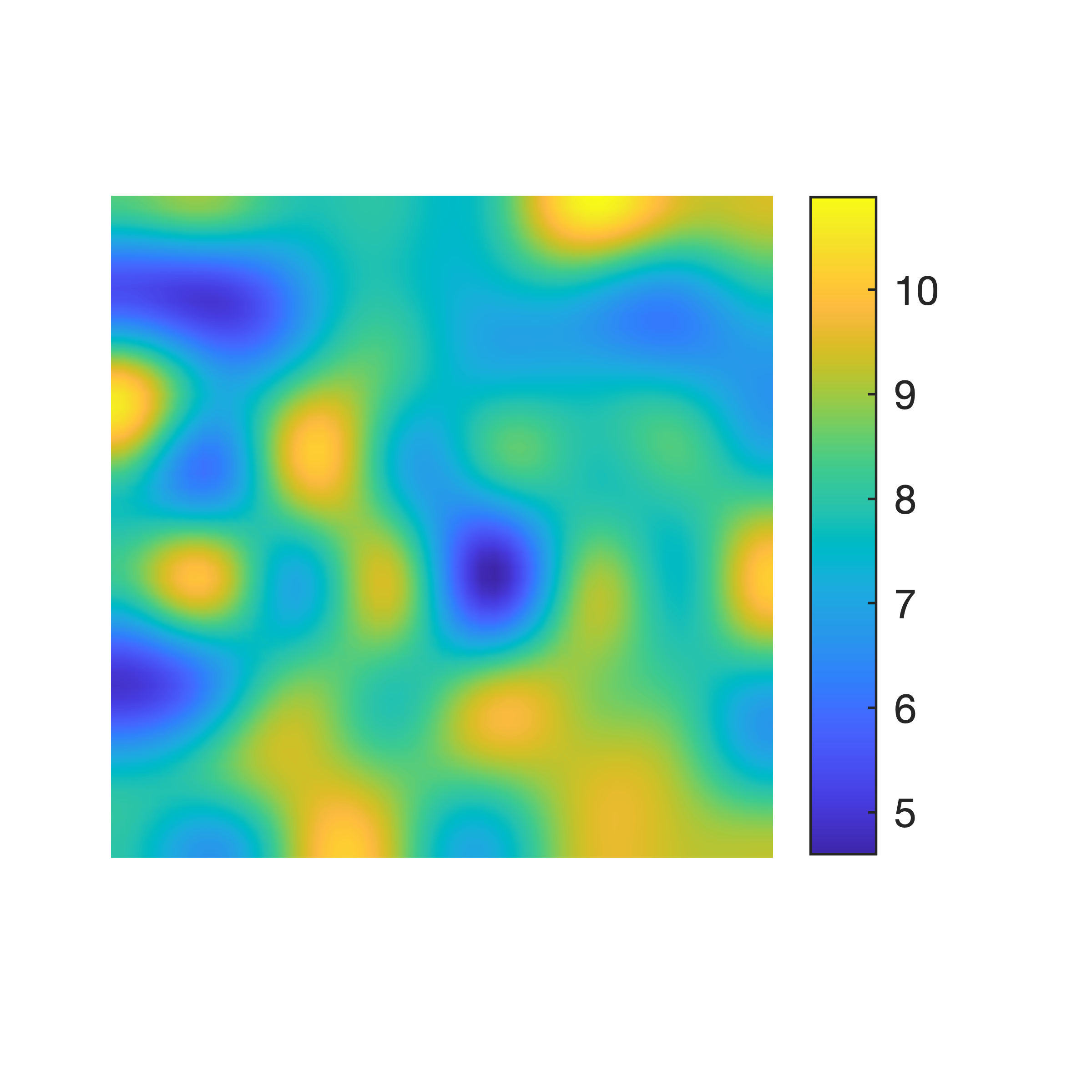} 
	\includegraphics[width=0.24\textwidth,trim=0cm 1.5cm 0cm 1cm,clip]{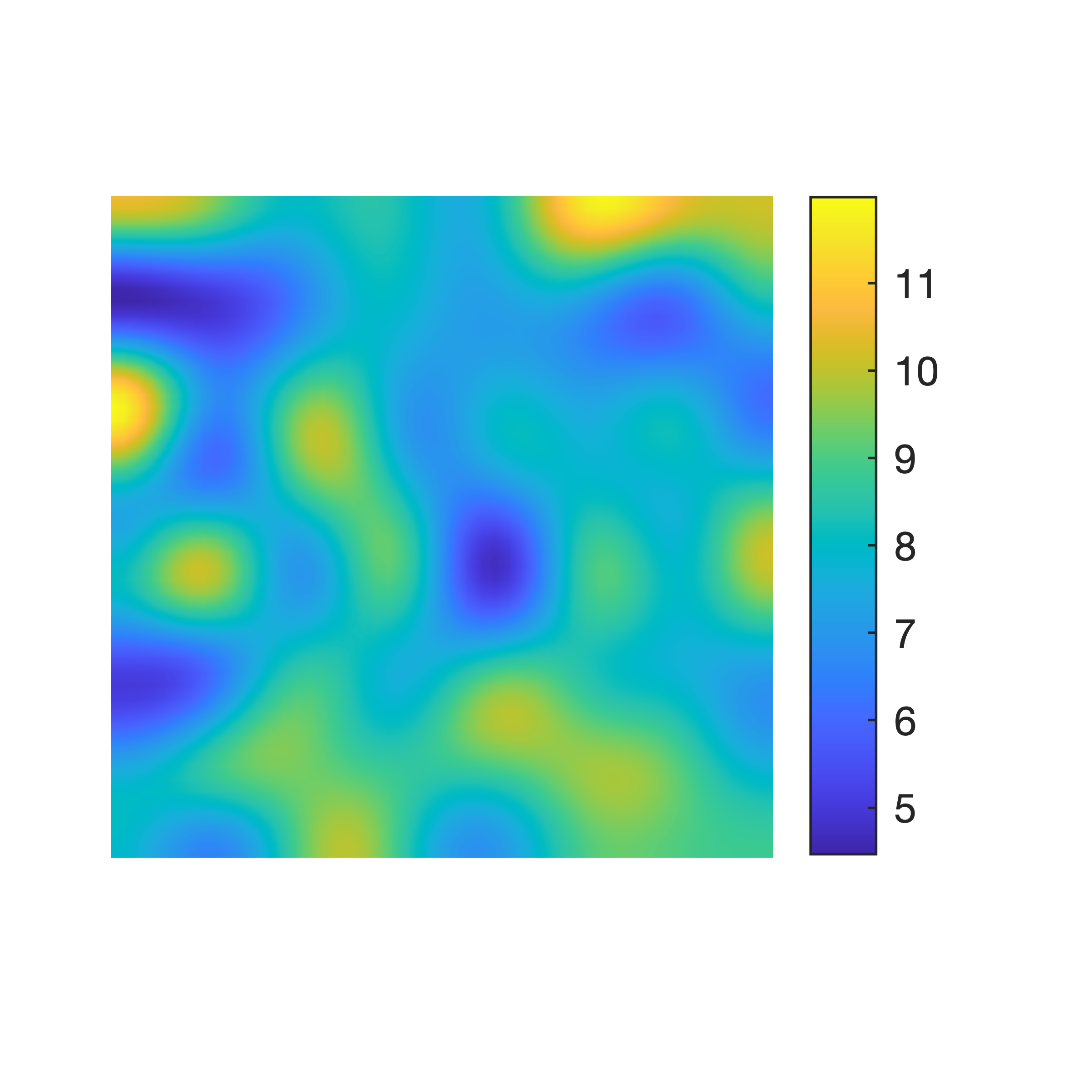}
	\includegraphics[width=0.24\textwidth,trim=0cm 1.5cm 0cm 1cm,clip]{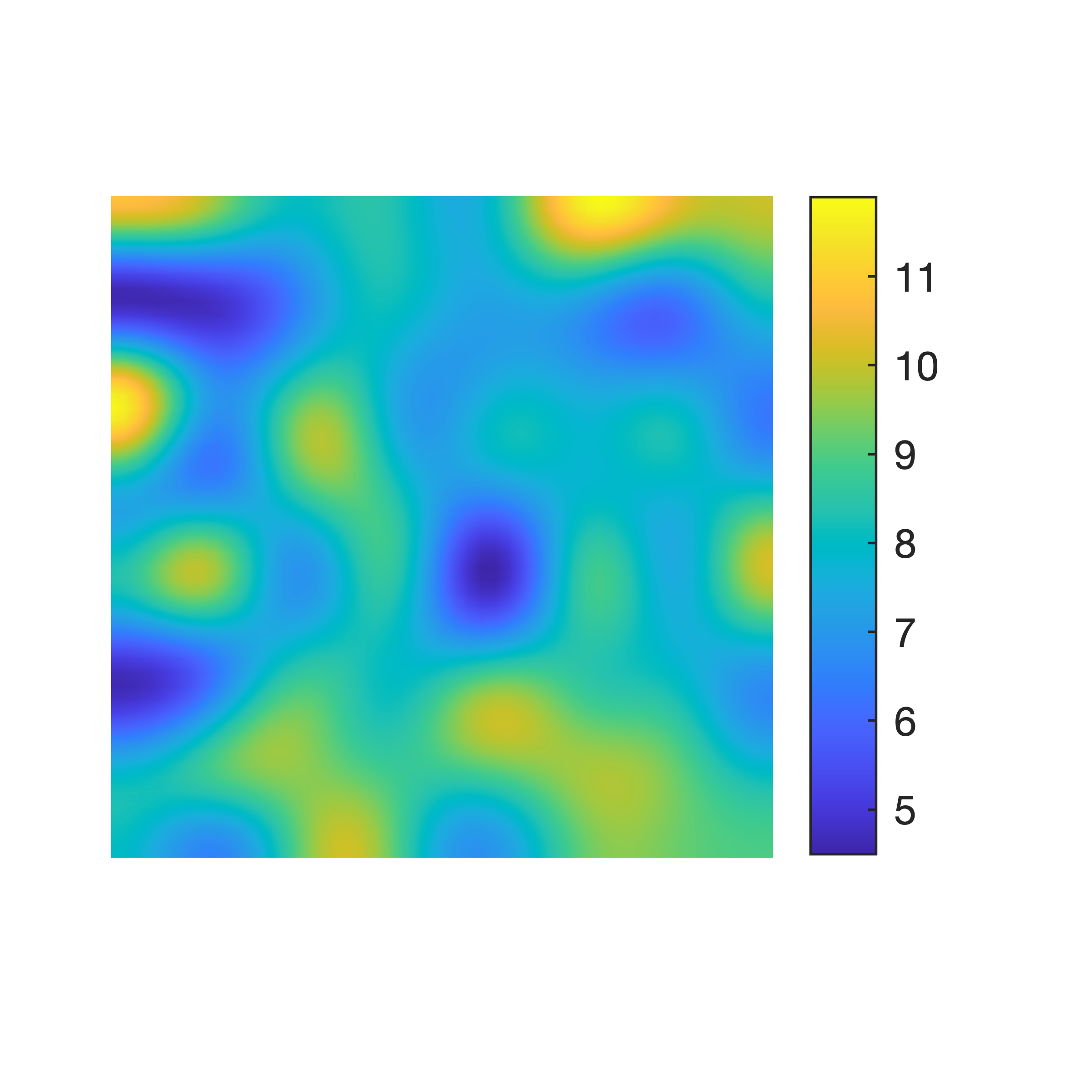} \\
	\includegraphics[width=0.24\textwidth,trim=0cm 1.5cm 0cm 1cm,clip]{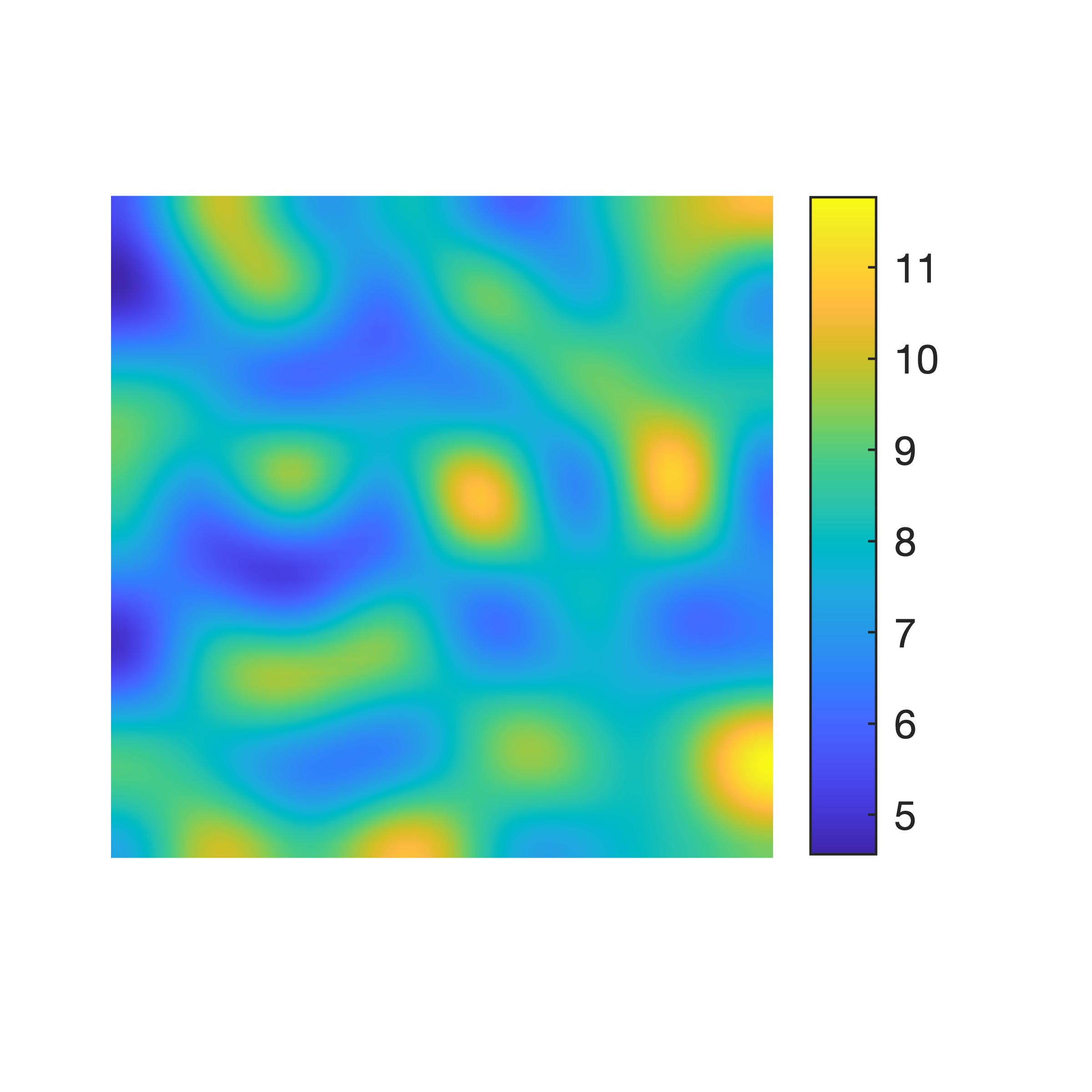}
	\includegraphics[width=0.24\textwidth,trim=0cm 1.5cm 0cm 1cm,clip]{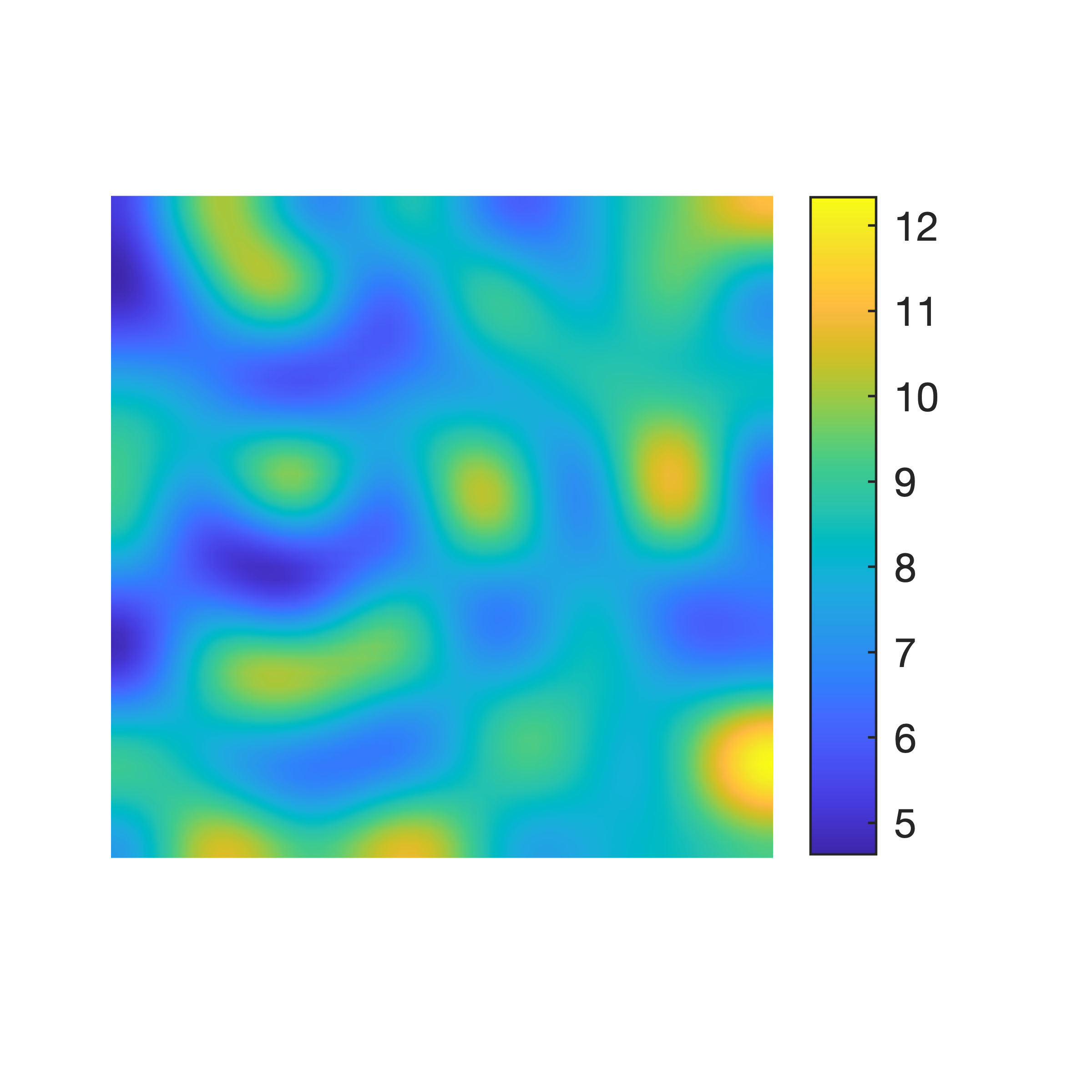} 
	\includegraphics[width=0.24\textwidth,trim=0cm 1.5cm 0cm 1cm,clip]{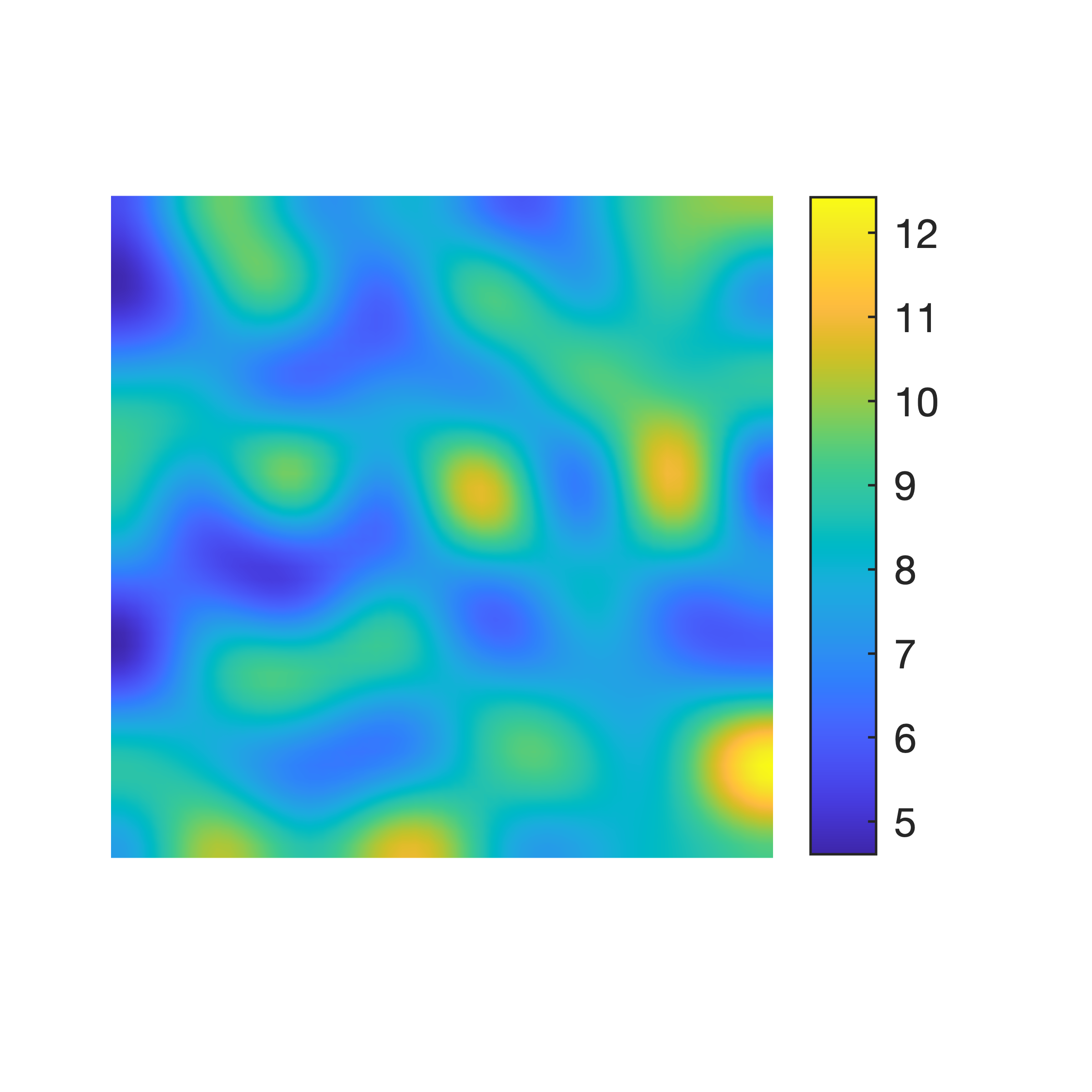}
	\includegraphics[width=0.24\textwidth,trim=0cm 1.5cm 0cm 1cm,clip]{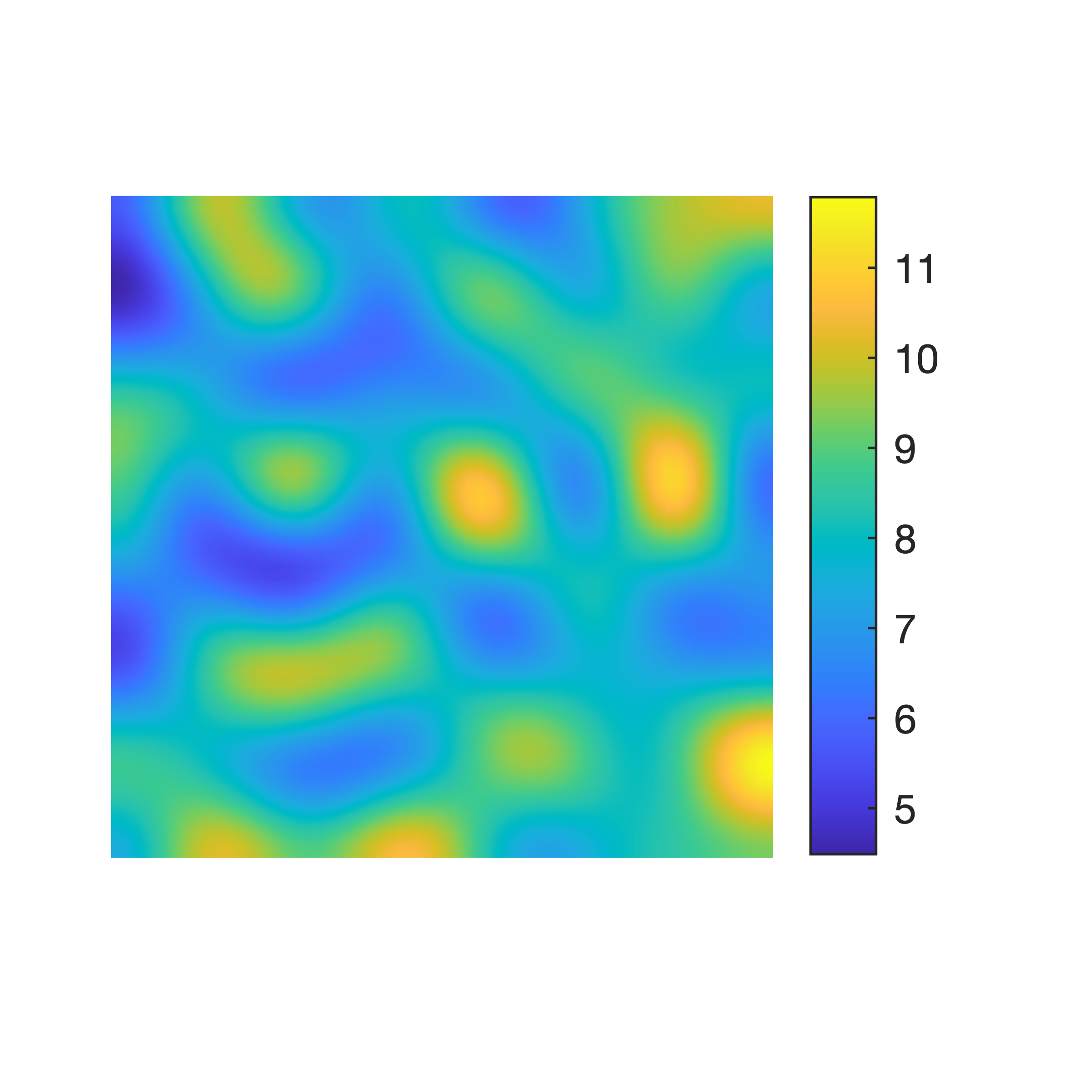} 
	\caption{\small{The reconstructed velocity images for the Fourier model (\ref{model2_M7}). Each row corresponds to the reconstruction of one velocity field. From the left to the right are the ground true velocity field, the reconstructed velocity field with $J = 1$, the reconstructed velocity field with $J = 20$, and the reconstructed velocity field with $J = 50$, respectively.}}
	\label{eight_fourier}
\end{figure}

\subsection{Warm start with learned inverse in stage (ii)}

In the previous numerical examples, the velocity models we intend to reconstruct are mostly within the distribution of velocity models used to generate the training data set. Therefore, the reconstruction given by online stage (i) is sufficient.

In the last example, we test the proposed coupling scheme on a velocity model that is outside of the training domain. Precisely, the design of the offline training stage is the same as the one in Section~\ref{numerical_ex_4modes}, namely, we focus on learning the first $5$ Fourier modes along each direction during the training. However, our goal in this example is to reconstruct the following velocity model,
\begin{equation}\label{velocity_model_outside}
m(x,z) = \left\{
\begin{aligned}
    &8.4,\quad (x,z)\in[0.22, 0.74]\times[-0.52, -0.5],\\
    &7.6, \quad \mbox{otherwise},
\end{aligned}
    \right. , \quad (x,z)\in[0,1]\times[-1,0],
\end{equation}
which is apparently outside of the training domain containing many high-frequency components. To reconstruct (\ref{velocity_model_outside}), we first implement the $J$-term truncated Neumann series approximation (\ref{EQ:Neumann}) with $J = 20$ to obtain the low-frequency part of the velocity model (\ref{velocity_model_outside}), then use it as the initial guess of a quasi-Newton algorithm based on the BFGS gradient update rule for the minimization problem in online stage (ii)~ \eqref{EQ:Online Stage II}. In addition, to recover the high-frequency components of the velocity field, except the $51$ receivers at the bottom surface in the training stage, we place another $51$ receivers at the top surface when minimizing ~\eqref{EQ:Obj Classical}, and enforce $7$ different top sources $h_i(x), i = 1,2\cdots,7$ with $h_1, h_2, h_3$ being the same as the top sources in the training stage, and
\[
h_4(x) = e^{-\frac{(x-0.7)^2}{0.01}} -e^{-\frac{(x-0.2)^2}{0.01}}, \quad
h_5(x) = e^{-\frac{(x-0.3)^2}{0.01}} -e^{-\frac{(x-0.9)^2}{0.01}}, 
\]
\[
h_6(x) = e^{-\frac{(x-0.2)^2}{0.01}} -e^{-\frac{(x-0.5)^2}{0.01}}, \quad
h_7(x) = e^{-\frac{(x-0.1)^2}{0.01}} -e^{-\frac{(x-0.6)^2}{0.01}}.
\]

Figure~\ref{outside_domian_eg} presents the surface plots of the reconstructed velocity images with both noise-free data and the data with Gaussian noise. Precisely, the top row shows the reconstructed velocity from noise-free data, the middle row displays the reconstructed velocity from the data with $10\%$ multiplication Gaussian noise, and the bottom row presents the reconstructed velocity from the data with $10\%$ additive Gaussian noise, while from the left to the right columns are the ground true velocity field, the reconstructed velocity image with $J = 1$, the reconstructed velocity image with $J = 20$ (initial guess), and the reconstructed velocity image by minimizing~\eqref{EQ:Obj Classical}, respectively. We note that adding several terms to the Neumann series approximation can lead to a relatively good reconstruction for the low-frequency components of the velocity field (\ref{velocity_model_outside}) for all cases (noise-free data and the data with Gaussian noise) by comparing the reconstruction results in column $2$ and column $3$. Then, solving an extra classical minimization problem as documented in Section~\ref{Sec:outside_domain} helps grab the high-frequency components of the velocity field, as shown in the last column.

\begin{figure}[!htb]
	\centering
	\includegraphics[width=0.24\textwidth,trim=0cm 1.5cm 0cm 1cm,clip]{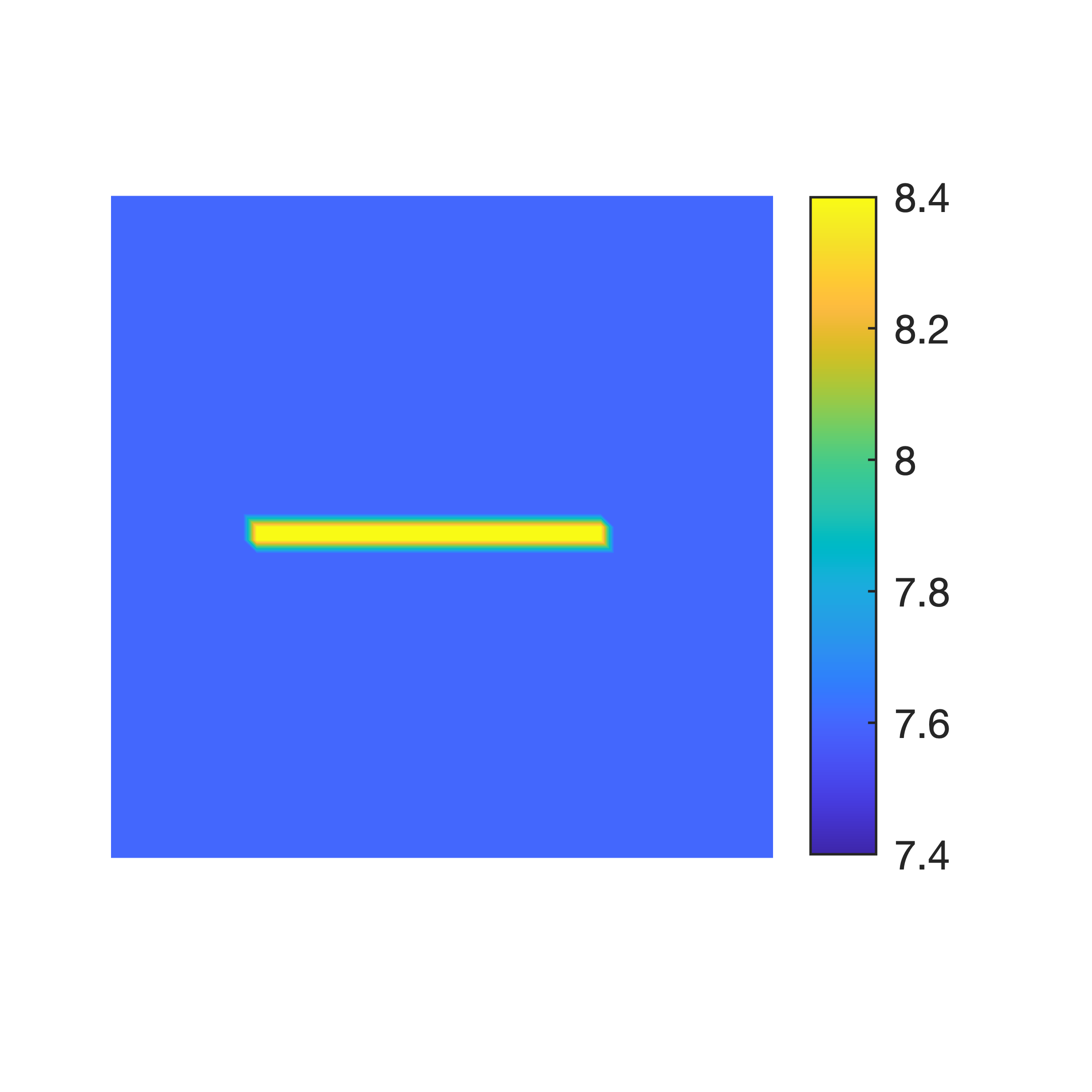}
	\includegraphics[width=0.24\textwidth,trim=0cm 1.5cm 0cm 1cm,clip]{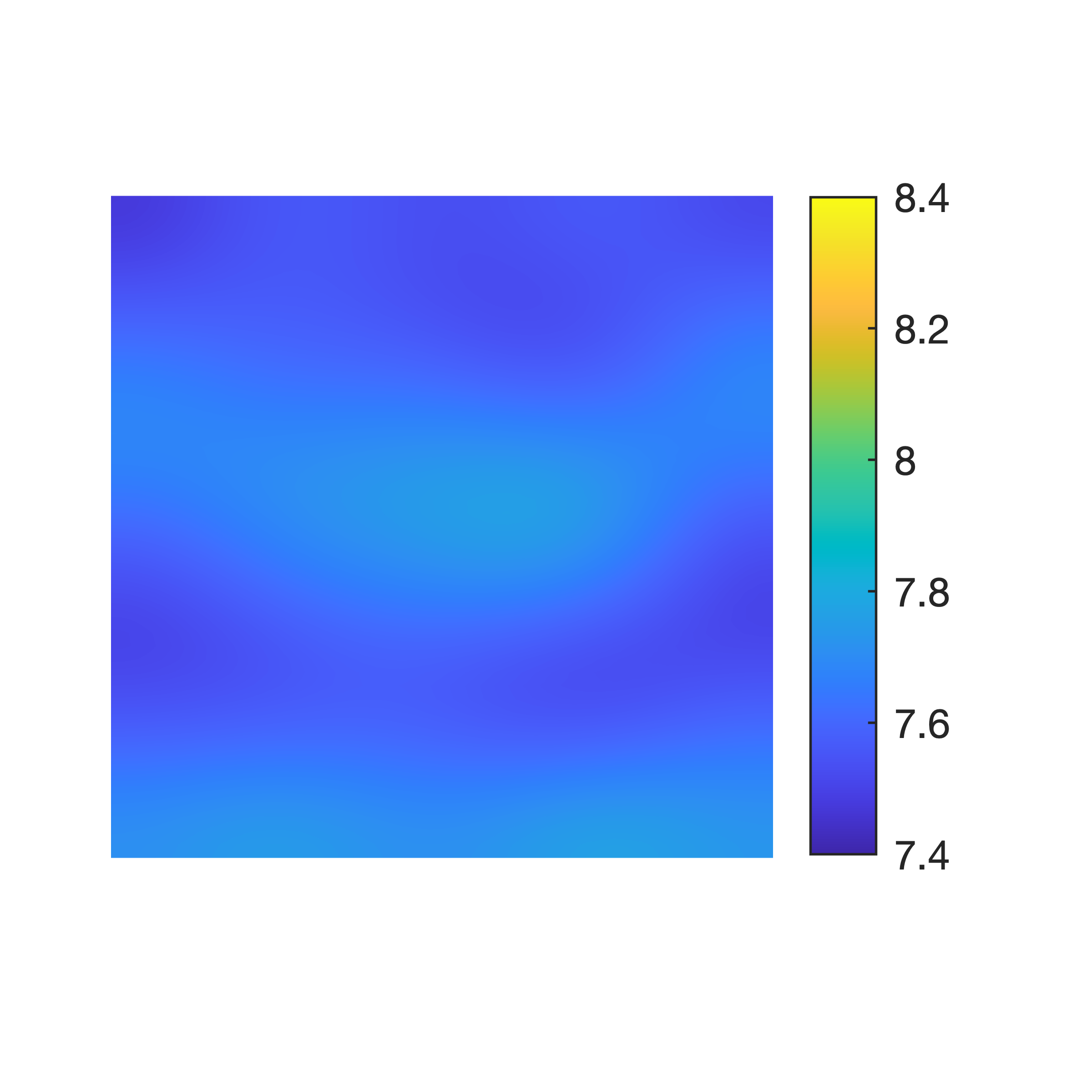} 
	\includegraphics[width=0.24\textwidth,trim=0cm 1.5cm 0cm 1cm,clip]{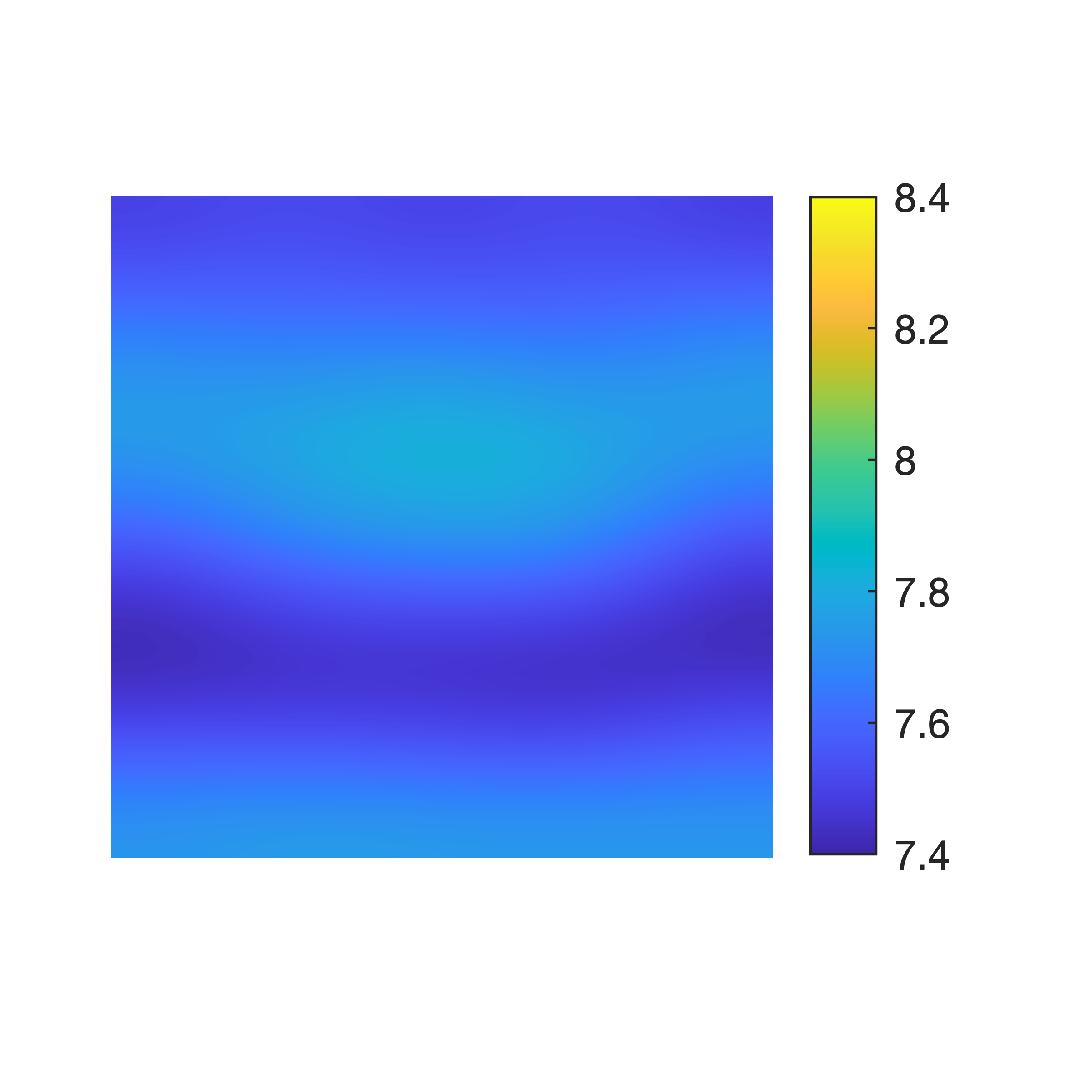}
	\includegraphics[width=0.24\textwidth,trim=0cm 1.5cm 0cm 1cm,clip]{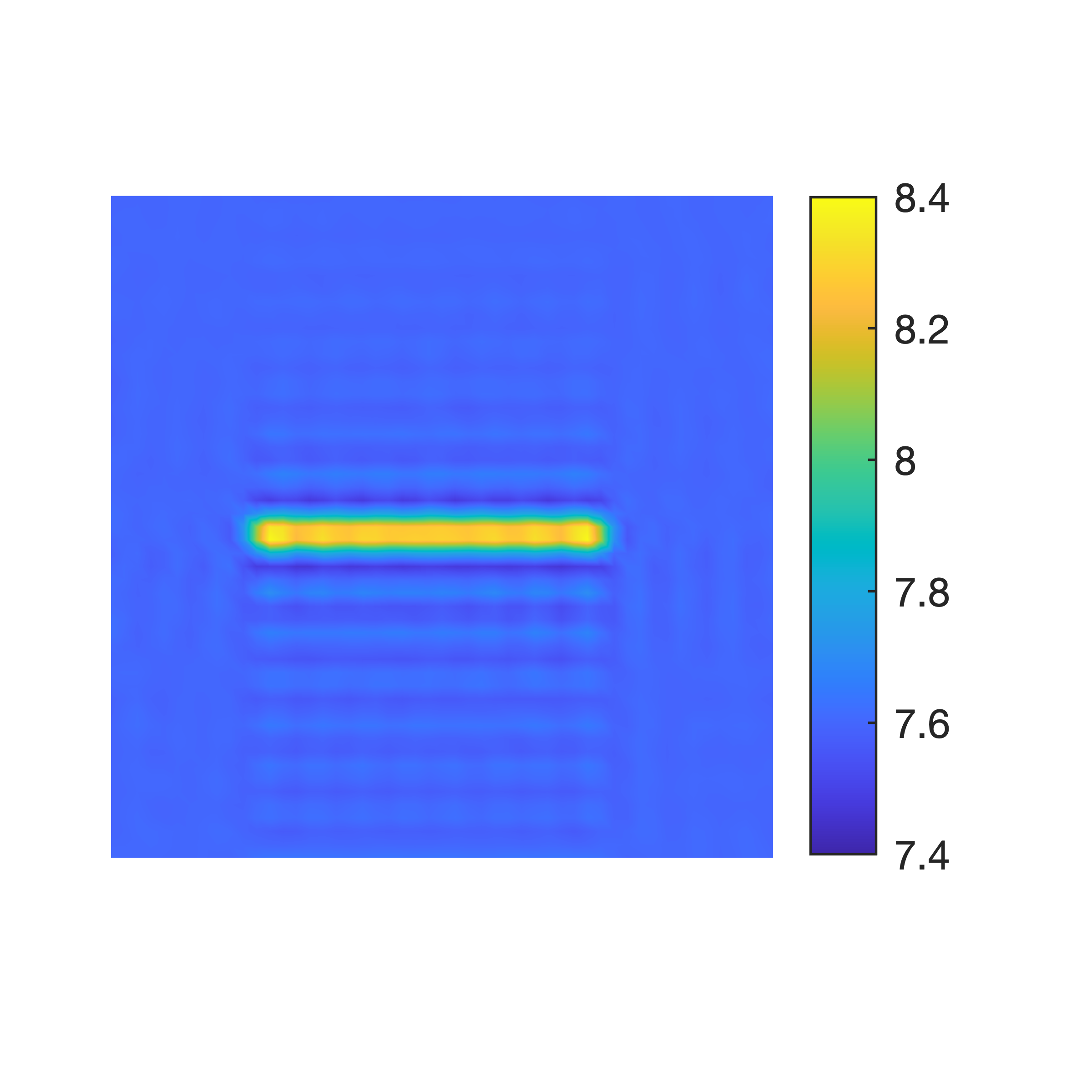} \\
	\includegraphics[width=0.24\textwidth,trim=0cm 1.5cm 0cm 1cm,clip]{example_5_true}
	\includegraphics[width=0.24\textwidth,trim=0cm 1.5cm 0cm 1cm,clip]{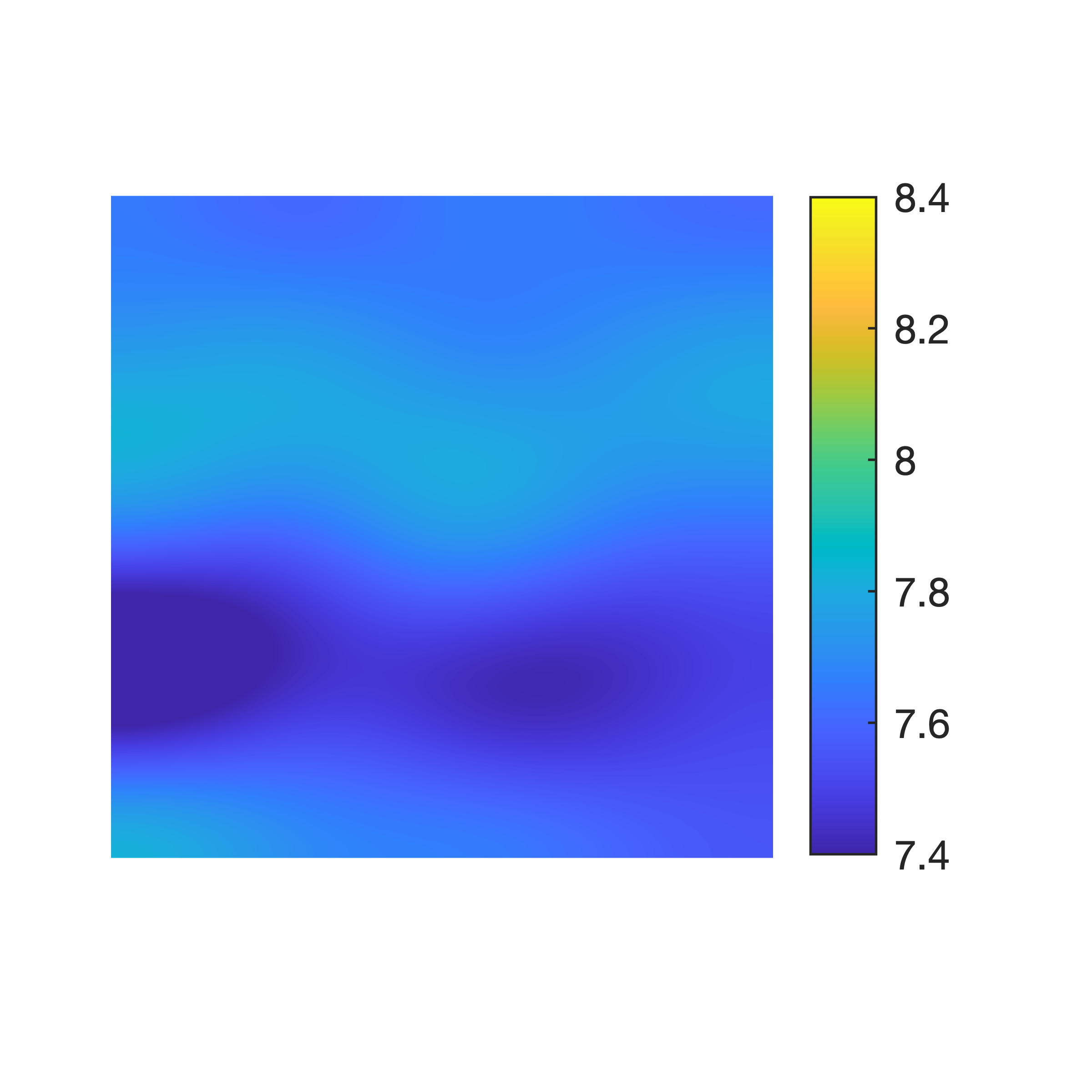} 
	\includegraphics[width=0.24\textwidth,trim=0cm 1.5cm 0cm 1cm,clip]{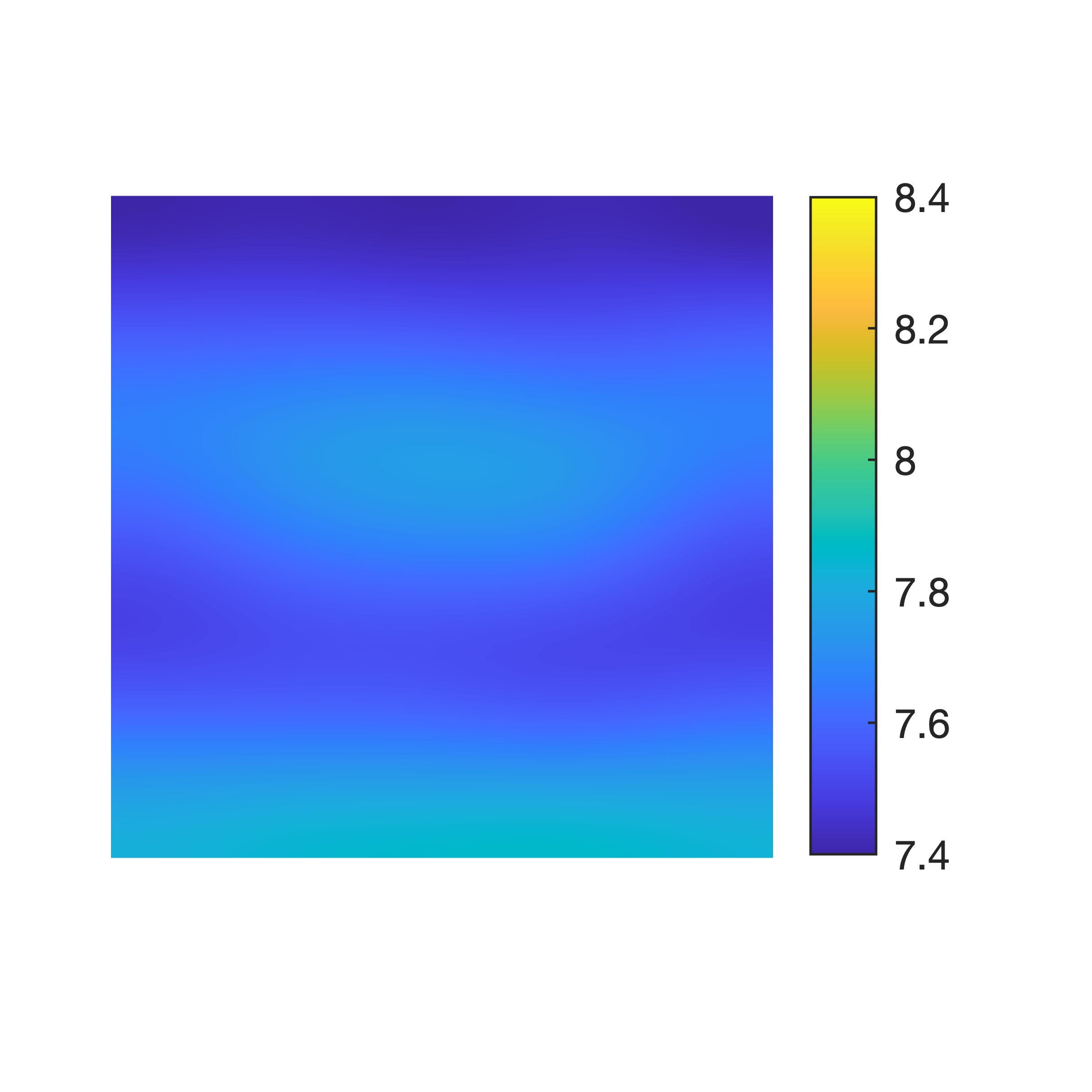}
	\includegraphics[width=0.24\textwidth,trim=0cm 1.5cm 0cm 1cm,clip]{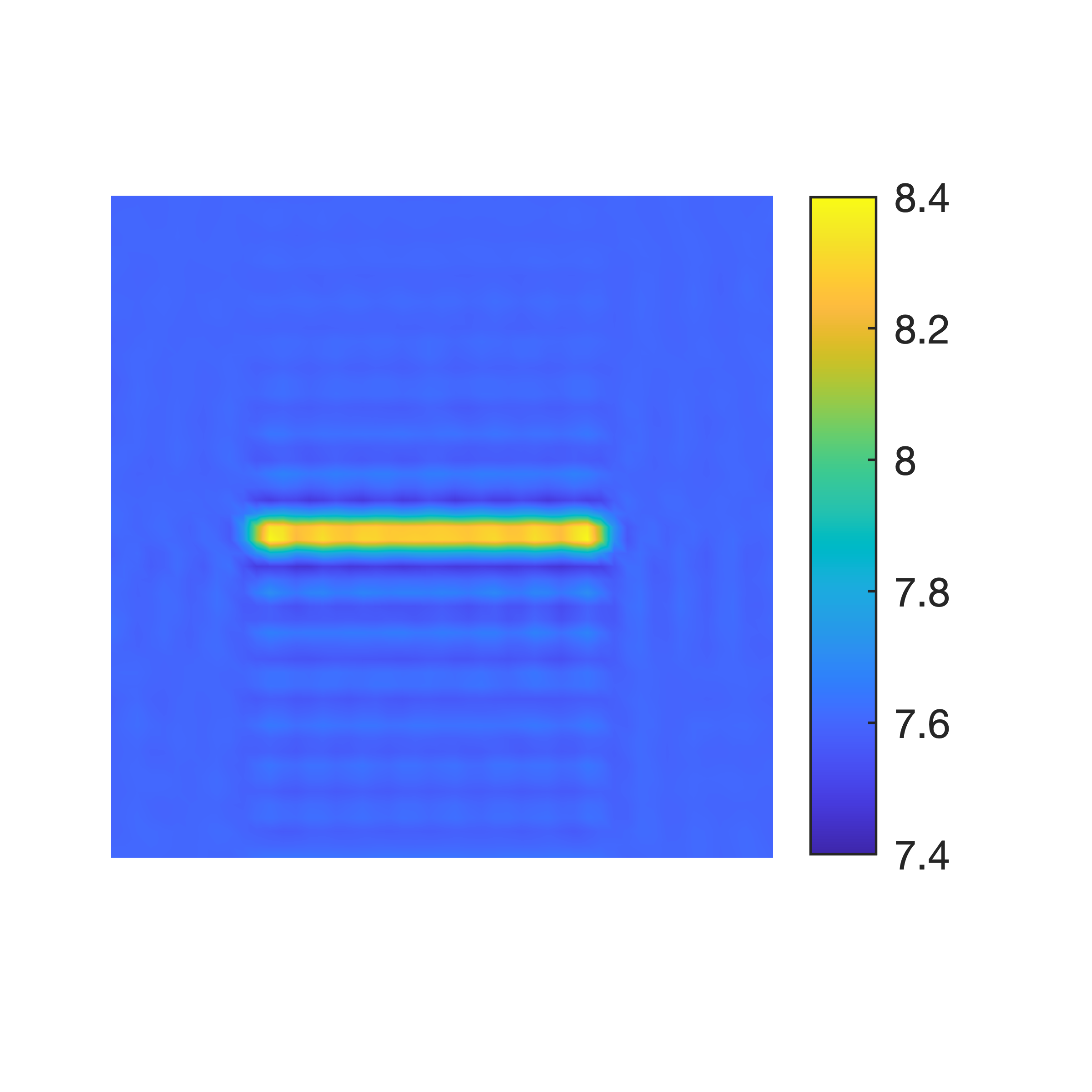} \\
	\includegraphics[width=0.24\textwidth,trim=0cm 1.5cm 0cm 1cm,clip]{example_5_true}
	\includegraphics[width=0.24\textwidth,trim=0cm 1.5cm 0cm 1cm,clip]{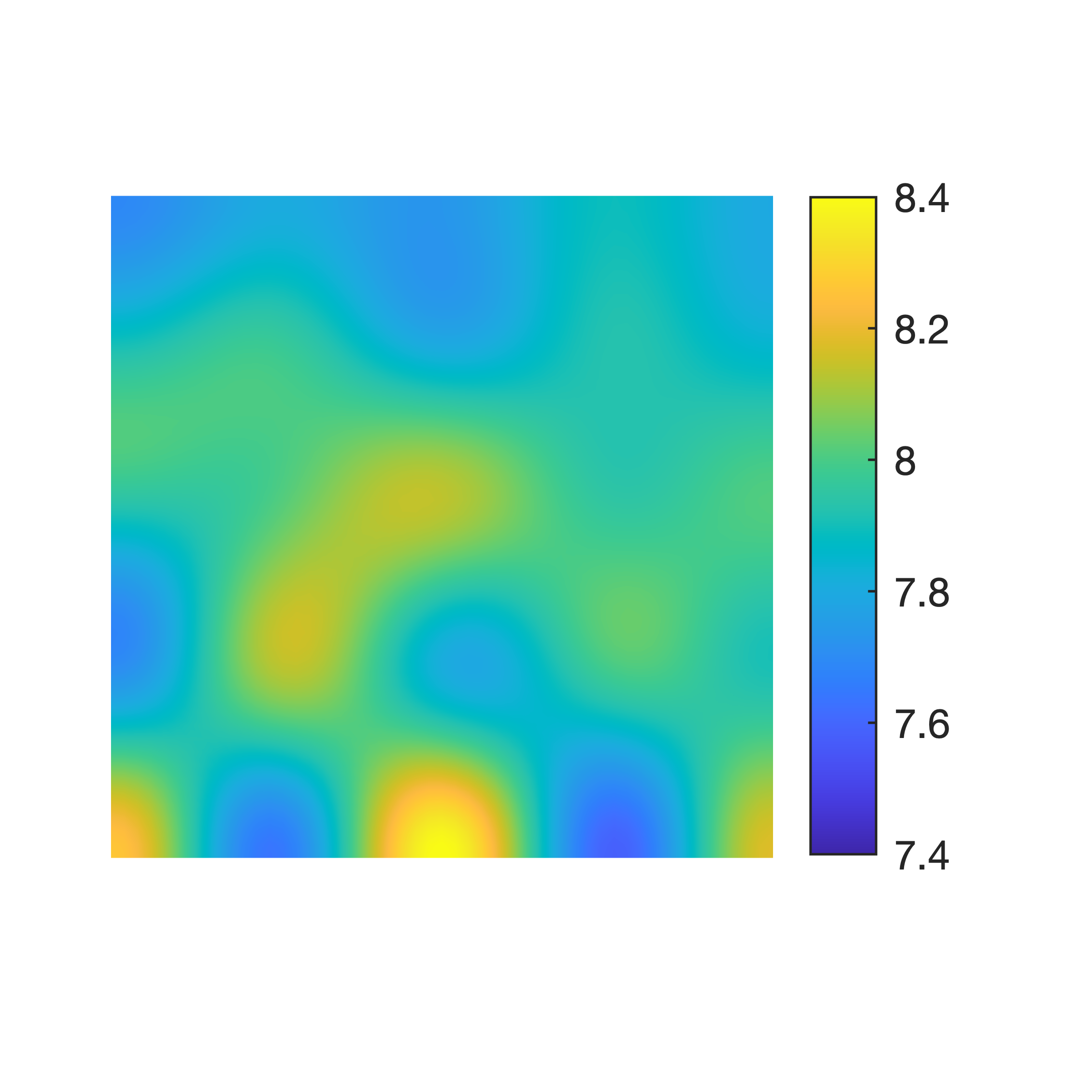} 
	\includegraphics[width=0.24\textwidth,trim=0cm 1.5cm 0cm 1cm,clip]{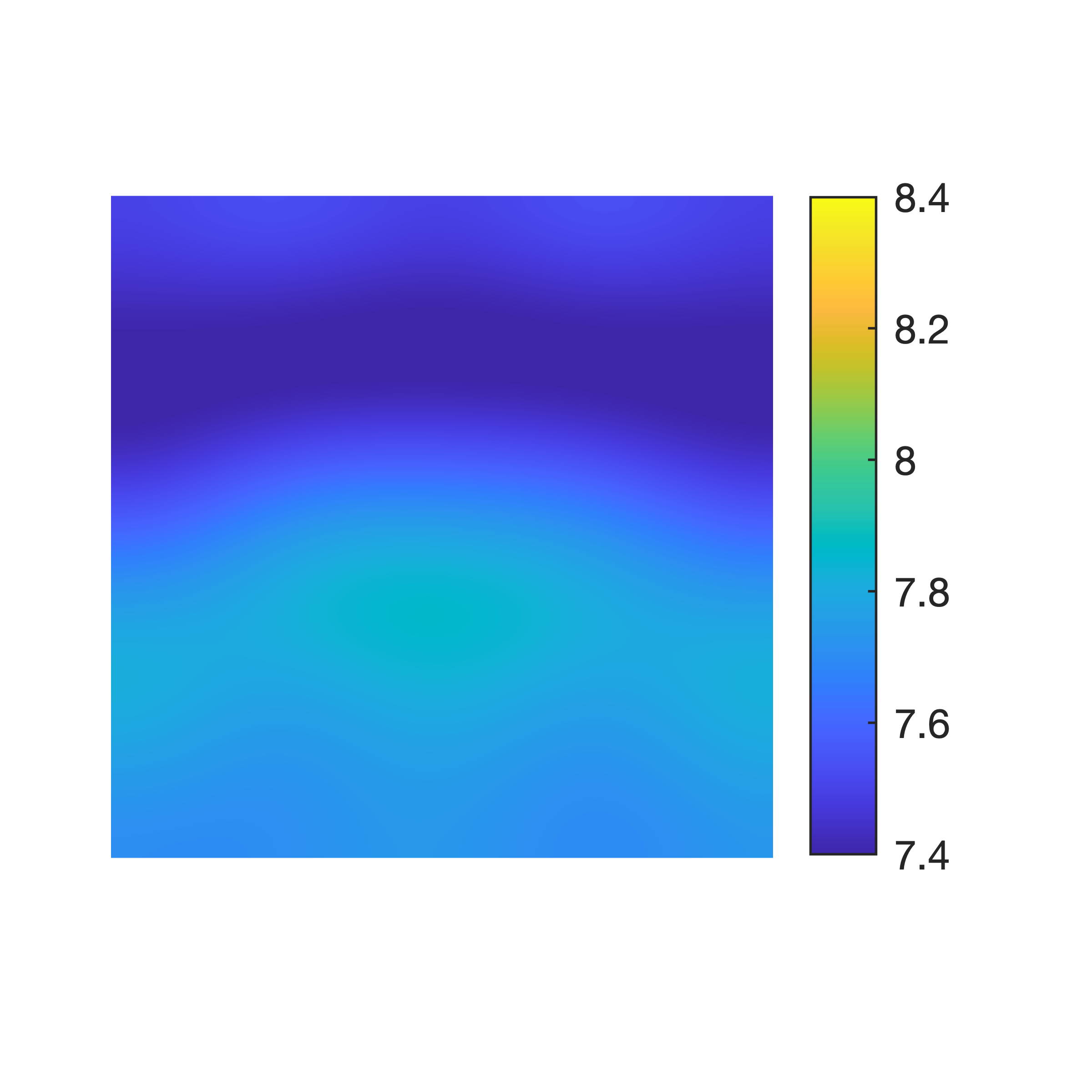}
	\includegraphics[width=0.24\textwidth,trim=0cm 1.5cm 0cm 1cm,clip]{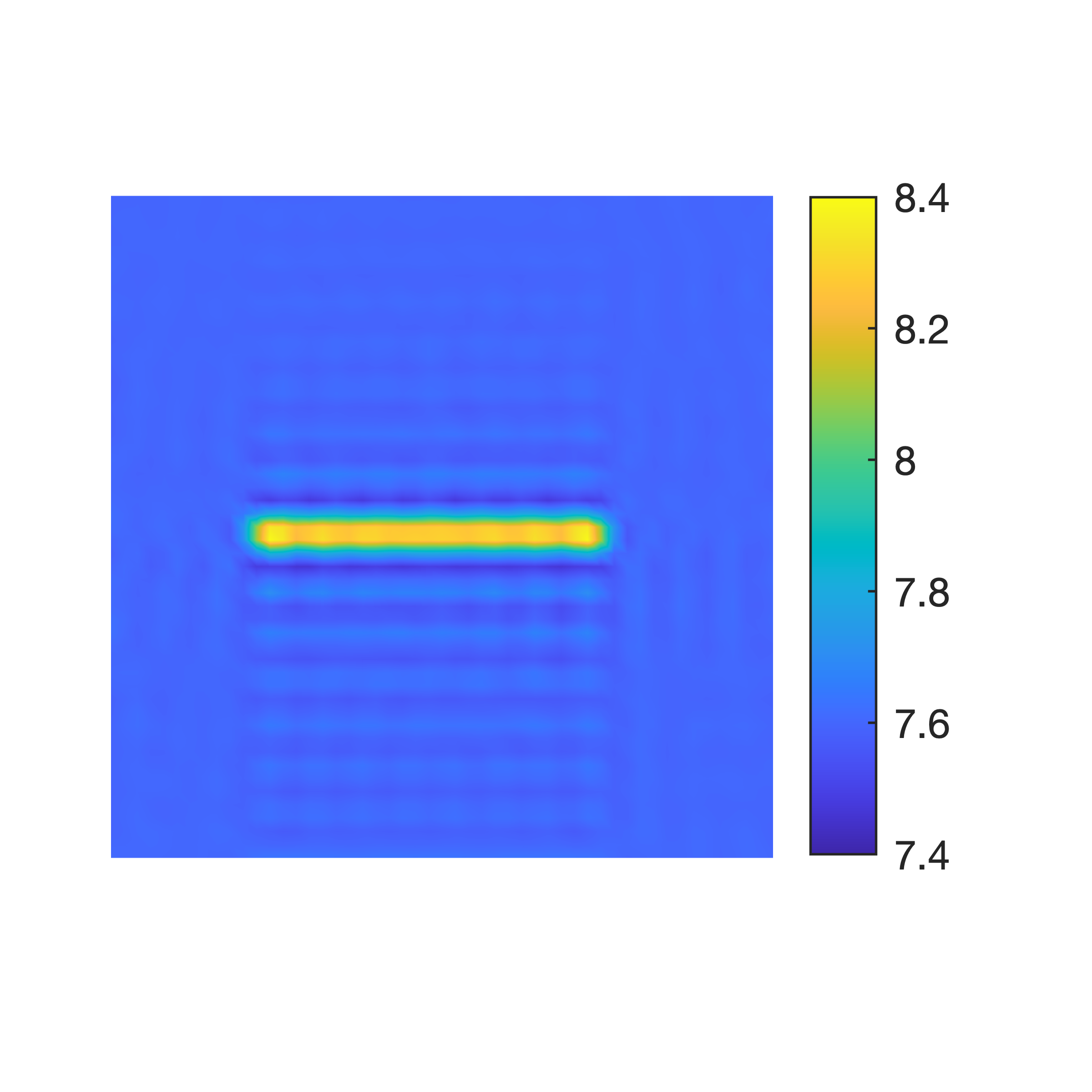} 
	\caption{\small{The reconstructed velocity images for the velocity model ~\eqref{velocity_model_outside}. From the top to the bottom are the reconstructions with noise-free data, data with $10\%$ multiplication Gaussian noise, and data with $10\%$ additive Gaussian noise, respectively. From the left to the right are the ground true velocity field, the reconstructed velocity field $\wh m_{(i)}$ with $J = 1$, the reconstructed velocity field $\wh m_{(i)}$ with $J = 20$, and the reconstructed velocity field $\wh m_{(ii)}$ by solving~\eqref{EQ:Online Stage II}, respectively.}}
	\label{outside_domian_eg}
\end{figure}

\section{Concluding remarks}
\label{SEC:Concl}

We presented in this work an offline-online computational strategy for coupling deep learning methods with classical model-based iterative reconstruction schemes for the FWI problem. The main advantage of the coupling lies in two aspects. First, the coupling requires much less rigorous training for the learning part than a purely learning-based approach. This makes the learning of the approximate inverse operator much more realistic with limited computational resources. Second, offline learning can still significantly reduce the online reconstruction with new datasets when used as a nonlinear preconditioner. The numerical simulations we performed demonstrated the feasibility of such a coupled approach.

There are many important issues in the current direction that need to be more rigorously investigated. One particular aspect is to develop a mathematical characterization of the training error in the learning process and study its impact on the reconstruction step. A second aspect is to improve the learning algorithm to learn more features in the inverse operator. As we reasoned in the paper, it is extremely challenging to learn all the details of the inverse operator. However, we believe that one could do much better than the numerical experiments in this work, where we pursue only a very small number of features in the learning process. Searching for better feature models for the velocity field as well as the time traces of the wavefield is also an important task with the potential to significantly improve the performance of the learning procedure. A third possible improvement is to find a better way to do the online reconstruction. In particular, if the training process is focused only on a limited number of Fourier modes, stage (i) of the reconstruction is not able to recover higher Fourier modes that are filtered by the trained preconditioner. It would be useful to find a better way to recover those Fourier modes than what we did in stage (ii). Adaptive training such as that in~\cite{HaReSo-arXiv25} could potentially offer some alternatives.

\section*{Acknowledgments}

We would like to thank the anonymous referees for their useful comments that helped us improve the quality of this work. This work is partially supported by the National Science Foundation through grants DMS-1937254 and DMS-2309802.

\appendix
\section{Adjoint state gradient calculation}
\label{SEC:Appendix A}

We summarize here the calculation of the Fr\'echet derivative of the objective function $\Phi(m)$ defined in~\eqref{EQ:Obj} with respect to the velocity field $m$.

Following Proposition~\ref{PROP:Frechet Diff}, the Fr\'echet differentiability of the map $\bff(m)$ with respect to $m$ is well-established under reasonable assumptions on the smoothness of the domain, the regularity of the incident wave source $h$ and the regularity of the velocity field $m$. With the assumption we have on the differentiability of the trained network $\wh\bff_{\wh\theta}^{-1}$, the Fr\'echet differentiability of $\Phi(m)$ in~\eqref{EQ:Obj} is ensured.

To simplify the notation, we denote by 
\begin{equation}\label{EQ:Residual}
	r(m):=\wh f_{\wh\theta}^{-1} (f(m))-\wh f_{\wh\theta}^{-1}(g^\delta)
\end{equation}
the data residual and $\Gamma\subset\Omega$ the subset of the boundary of the domain where acoustic data are measured. To be concrete, we take the regularization functional to be the $\cH^1$ semi-norm of the unknown $m$. We also assume that the velocity is known on the boundary of the domain so that the perturbation $\delta m|_{\partial\Omega}=0$. This assumption simplified the calculations below but is by no means essential.

Taking the derivative of $\Phi(m)$ with respect to $m$ in the direction $\delta m$, we have
\begin{equation}\label{EQ:Obj Grad1}
	\Phi'(m)[\delta m]=\int_{\Omega} r(m) \big(\wh f_{\wh\theta}^{-1}\big)'(f(m))\Big[f'(m)[\delta m]\Big] d\bx +\gamma\int_\Omega \nabla m \cdot \nabla \delta m\, d\bx\,.
\end{equation}
Let $\big(\wh f_{\wh\theta}^{-1}\big)'^{*}: L^2(\Omega)\mapsto L^2( (0, T]\times \Gamma)$ be the adjoint of the operator $\big(\wh f_{\wh\theta}^{-1}\big)'(f(m))$. Using the assumption that $\delta m|_{\partial\Omega}=0$, we can then write the above result as
\begin{equation}\label{EQ:Obj Grad2}
	\Phi'(m)[\delta m]=\int_0^T\int_{\Gamma}\big(\wh f_{\wh \theta}^{-1}\big)'^*[r(m)]  f'(m)[\delta m]\, dS(\bx) dt -\gamma\int_\Omega \Big(\Delta m\Big) \delta m\, d\bx\,.
\end{equation}
This can be written into the following form with the adjoint operator of $f'(m)$, $f'^*: L^2((0, T]\times \Gamma) \mapsto L^2(\Omega)$:
\begin{equation}\label{EQ:Obj Grad3}
	\Phi'(m)[\delta m]=\int_{\Omega} f'^*\Big[\big(\wh f_{\wh\theta}^{-1}\big)'^{*}[r(m)]\Big] \delta m\, d\bx -\gamma\int_\Omega \Big(\Delta m\Big) \delta m\, d\bx\,.
\end{equation}

The adjoint operator $f'^*$ can be found in the standard way. We document the calculation for the specific two-dimensional setup we have as follows. 

For the wave equation \eqref{EQ:Wave Equation}, we can formally differentiate $u$ with respect to $m$ to have that $u'$ solves
\begin{equation}
	\begin{array}{rcll}
	\dfrac{1}{m^2}\dfrac{\partial^2 u'}{\partial^2 t} -\Delta u' & = &  2\dfrac{\delta m}{m^3} \dfrac{\partial^2 u}{\partial^2 t}, & \mbox{in}\ \ (0, T]\times \Omega\\[1ex]
	u'(0, x, z)=\dfrac{\partial u'}{\partial t}(0, x, z)  &=& 0,& (x,z)\in(0, L)\times (-H, 0)\\
	u'(t, 0, z) & = & u'(t, L, z), &  (t, z)\in (0, T] \times (-H, 0)\\
	\dfrac{\partial u'}{\partial z}(t,x,-H) & = & 0, & (t, x)\in (0, T]\times (0, L)\\
	\dfrac{\partial u'}{\partial z}(t, x, 0) & = & 0, & (t, x)\in (0, T] \times (0, L)
	\end{array}
\end{equation}

Let us define the adjoint problem
\begin{equation}\label{EQ:Wave Adj}
	\begin{array}{rcll}
	\dfrac{1}{m^2}\dfrac{\partial^2 w}{\partial^2 t} -\Delta w & = &  0, & \mbox{in}\ \ (0, T]\times \Omega\\[1ex]
	w(T, x, z)=\dfrac{\partial w}{\partial t}(T, x, z)  &=& 0,& (x,z)\in(0, L)\times (-H, 0)\\
	w(t, 0, z)  =  w(t, L, z) & = & 0, &  (t, z)\in (0, T] \times (-H, 0)\\
	\dfrac{\partial w}{\partial x}(t, 0, z) + \dfrac{\partial w}{\partial x}(t, L, z) & = & 0, &  (t, z)\in (0, T] \times (-H, 0)\\
	\dfrac{\partial w}{\partial z}(t,x,-H) & = & 0, & (t, x)\in (0, T]\times (0, L)\\
	\dfrac{\partial w}{\partial z}(t, x, 0) & = & \big(\wh f_{\wh\theta}^{-1}\big)'^*[r(m)], & (t, x)\in (0, T] \times (0, L)
	\end{array}
\end{equation}

We can then multiply the equation for $u'$ by $w$ and the equation for $w$ by $u'$ and use integration by part to show that
\begin{equation}\label{EQ:Obj Grad4}
	\Phi'(m)[\delta m]=-\int_{\Omega} \dfrac{2}{m^3} \Big(\int_0^T \dfrac{\partial w}{\partial t} \dfrac{\partial u}{\partial t} d t\Big) \delta m\, d\bx-\gamma\int_\Omega \Big(\Delta m\Big) \delta m\, d\bx\,.
\end{equation}

When the data in the inversion are collected from $N_s$ different incoming sources $\{h_s\}_{s=1}^{N_s}$, the forward map $\bff(m)$ and the data ${\bf g}^\delta$ defined in (\ref{EQ:Nonl IP}). Let $u_s$ ($1\le s\le N_s$) be solution to~\eqref{EQ:Wave Equation} with source $h_s$, and $w_s$ be the solution to the adjoint equation~\eqref{EQ:Wave Adj} with the $s$-th component of $\big(\wh \bff_{\wh\theta}^{-1}\big)'^{*}[{\bf r}(m)]$, here
\begin{equation}\label{EQ: residual_full}
{\bf r}(m) = \wh \bff_{\wh \theta}^{-1} (\bff (m)) - \wh \bff_{\wh \theta}^{-1}({\bf g}^\delta),
\end{equation}
then derivative of $\Phi(m)$ can be computed as
\begin{equation}\label{EQ:Obj Grad5}
	\Phi'(m)[\delta m]=-\int_{\Omega} \dfrac{2}{m^3} \Big(\sum_{s=1}^{N_s}\int_0^T \dfrac{\partial w_s}{\partial t} \dfrac{\partial u_s}{\partial t} d t\Big) \delta m\, d\bx-\gamma\int_\Omega \Big(\Delta m\Big) \delta m\, d\bx\,.
\end{equation}

\begin{algorithm}
\caption{Gradient Calculation with Adjoint State}
\label{ALG:Adjoing-Gradient}
\begin{algorithmic}[1]
\For{$s=1$ to $N_s$}
	\State Solve~\eqref{EQ:Wave Equation} with $h_s$ for $u_s$
	\State Evaluate the $f(m; h_s)$ component of $\bff(m)$
\EndFor
\State Evaluate ${\bf r}(m)$ according to~\eqref{EQ: residual_full} with the network $\wh \bff_{\wh \theta}^{-1}$
\State Evaluate $\big(\wh \bff_{\wh \theta}^{-1}\big)'^{*}[{\bf r}(m)]$ with the neural network
\For{$s=1$ to $N_s$}
	\State Solve~\eqref{EQ:Wave Adj} with the $s$-th component of $\big(\wh \bff_{\wh \theta}^{-1}\big)'^{*}[{\bf r}(m)]$ as the source term for $w_s$
\EndFor
\State Evaluate $\Phi'(m)$ according to~\eqref{EQ:Obj Grad5}
\end{algorithmic}
\end{algorithm}
The computational procedure is summarized in Algorithm~\ref{ALG:Adjoing-Gradient}. The main difference between the calculation here and the adjoint calculation for a standard FWI gradient calculation is that we need to use the network $\wh \bff_{\wh \theta}^{-1}$  to backpropagate the data into the velocity field in Line $5$ of Algorithm~\ref{ALG:Adjoing-Gradient} to compute the residual, and then use the adjoint of the network operator (transpose of the gradient in the discrete case), $\big(\wh \bff_{\wh \theta}^{-1}\big)'^{*}$, to map the residual ${\bf r}(m)$ to the source of the adjoint wave equation in Line $6$.

\section{Inversion with truncated Neumann series}
\label{SEC:Appendix B}

The truncated Neumann series reconstruction~\eqref{EQ:Neumann} can be implemented with only the forward wave simulation and the learned neural network (without the need for the gradient of the learned operator). Let us define
\[
m_0:= \wh \bff_{\wh \theta}^{-1}({\bf g}^\delta), \quad R_J:= \sum\limits_{j=0}^{J-1} K^j\big(m_0)\,,
\]
with $K$ defined in~\eqref{EQ:Neumann}. We can then verify that
\begin{equation}\label{iteration formula}
    R_{J+1}(m_0) = (I + K\sum_{j=0}^{J-1} K^j)\big(m_0)=m_0 + KR_{J}(m_0) = m_0+ R_{J}(m_0) - \wh \bff_{\wh \theta}^{-1}(\bff( R_{J}(m_0)))\,.
\end{equation}
This leads to the computational procedure summarized in Algorithm~\ref{ALG:Neumann}. 
\begin{algorithm}[!htb]
\caption{Reconstruction with $J$-Term Truncated Neumann Series}
\label{ALG:Neumann}
\begin{algorithmic}[1]
\State Evaluate $m_0:=\wh \bff_{\wh \theta}^{-1}({\bf g}^\delta)$ with the learned neural network
\State Set $m\leftarrow m_0$;
\For{$j=1$ to $j=J-1$}
	
	\For{$s=1$ to $N_s$}
		\State Solve~\eqref{EQ:Wave Equation} with $(m, h_s)$ for $u_s$
		\State Evaluate the $f(m; h_s)$ component of $\bff(m)$
	\EndFor
	\State Update $m\leftarrow m_0 +m- \wh \bff_{\wh \theta}^{-1}(\bff(m))$

\EndFor
\end{algorithmic}
\end{algorithm}
The main difference between the calculation here and the adjoint calculation for a standard FWI gradient calculation is that we only need to evaluate the network $\wh \bff_{\wh \theta}^{-1}$  to project the data back into the velocity field to compute the residual $m_0 - \wh \bff_{\wh \theta}^{-1}(\bff(m))$, and update the current result $m$.

\section{Network structure and training}
\label{SEC:Appendix C}

For the sake of the reproducibility of our research, we provide here the structures of the encoder, decoder, and predictor networks we used in the encoder-decoder-predictor training framework described in Section~\ref{SUBSEC:Network}; see Figure~\ref{FIG:Encoder-Decoder-Predictor}. 
\begin{figure}[!htb]
	\centering
	\includegraphics[width=0.3\textwidth,trim=0cm 0.2cm 0cm 0cm,clip]{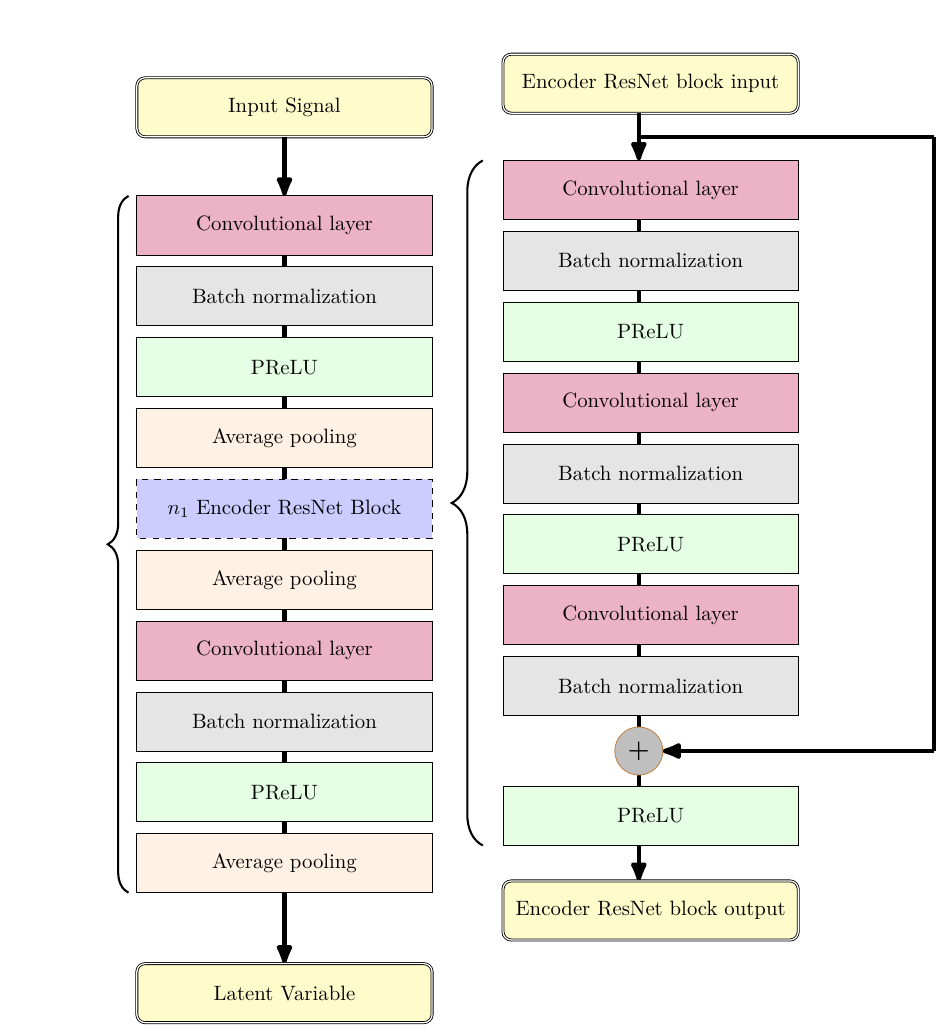}\hskip 1mm
	\includegraphics[width=0.3\textwidth,trim=0cm 0.2cm 0cm 0cm,clip]{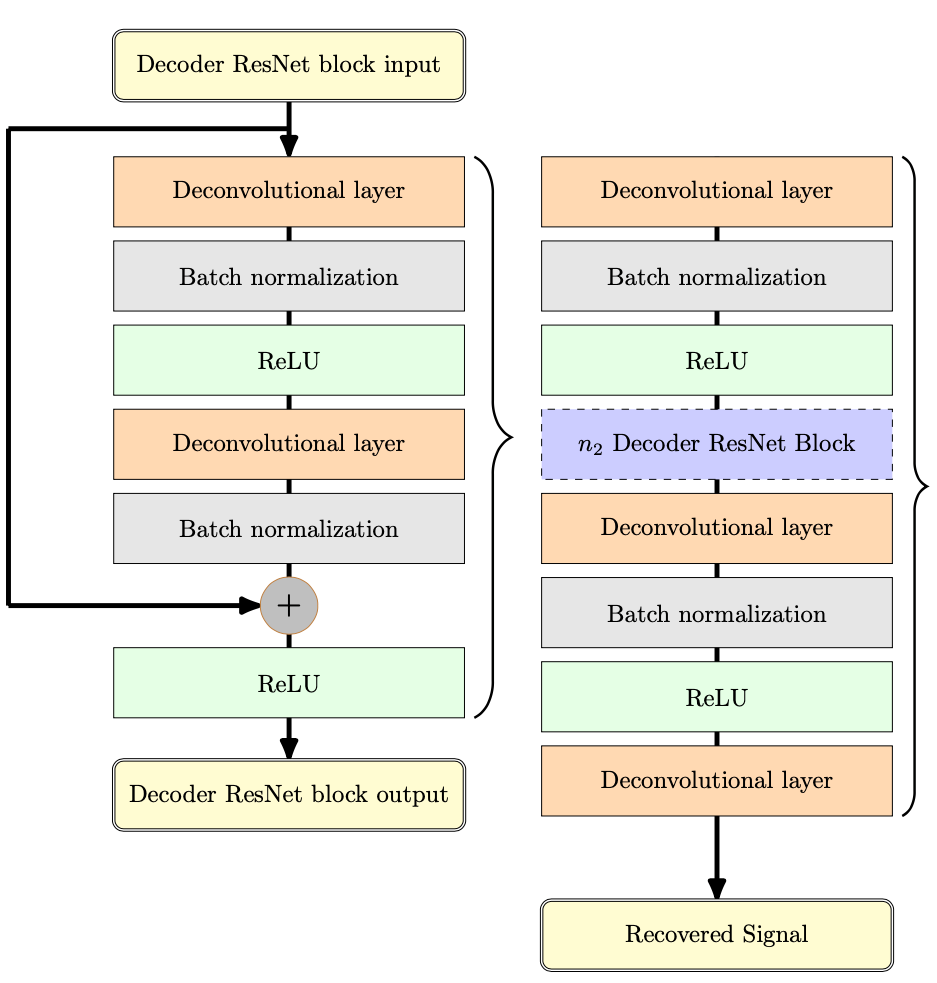}\hskip 9mm
	\includegraphics[width=0.3\textwidth,trim=0cm 0.2cm 0cm 0cm,clip]{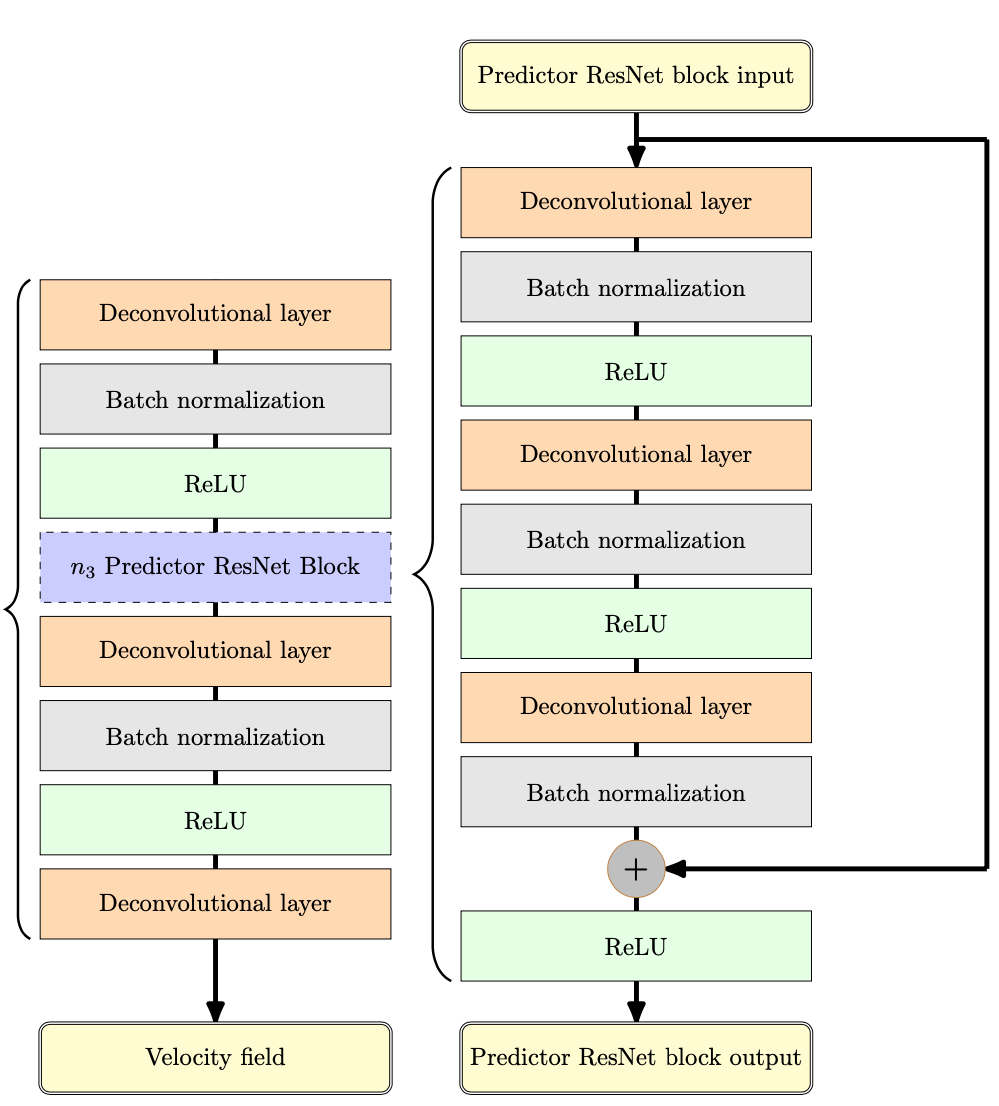}
	\put(-455,70){\rotatebox{90}{\tiny Encoder}}
	\put(-165,80){\rotatebox{270}{\tiny Decoder}}
	\put(-145,50){\rotatebox{90}{\tiny Predictor}}
	\caption{Network structures of the encoder, decoder and predictor networks.}
	\label{FIG:Encoder-Decoder-Predictor}
\end{figure}

The different layers of the networks are all standard, as indicated by their names. In our implementation, the input of the neural network is a $N_t\times N_d \times N_s$ tensor representing the solution of ~\eqref{EQ:Wave Equation} for $N_s$ sources, at $N_d$ detector points $\{\bx_d\}_{d=1}^{N_d}$, and on $N_t$ time instances $\{t_j\}_{j=1}^{N_t}$: $u(t_i, \bx_j; h_s)$, $i = 1,\cdots, N_t$, $j = 1,\cdots, N_d$, and $s=1, \cdots, N_s$. The network outputs the recovered input (from the decoder and the reconstructed velocity field from the predictor. When the output velocity field is represented with the Fourier basis, the output of the predictor is an $M\times M$ matrix whose $ij$-element is $\fm(k_i, k_j)$ ($0\le k_i, k_j\le M$). 

Besides the sizes of the network input (that is, the input of the encoder) and the network output (that is, the output of the predictor), the key parameters of the overall network are (i) the size of the latent variables, and (ii) the number of ResNet blocks in each of the sub-networks ($n_1$, $n_2$ and $n_3$). 
In our implementation, we tested the network structure with different numbers of ResNet blocks. The training results are not sensitive to the selection of such numbers (which controls the size of the overall network). In the numerical simulations we presented in the paper, we use $n_1=10$, $n_2=5$, and $n_3=10$. \RED{The computational code we used for the numerical simulations in this paper, implemented in PyTorch,} is deposited at
\href{https://github.com/wending1/FWI_Deep_Learning}{https://github.com/wending1/FWI\textunderscore Deep\textunderscore Learning}.

The network training is achieved with the Adam optimizer~\cite{KiBa-arXiv14}. The learning rate is initially set to be  $5\times 10^{-4}$, and decays by a factor of $1.2$ for every $5$ epoch. The batch size is chosen to be $128$. We stop the training after $50$ epochs.



\end{document}